\newcommand{\N}{\mathbb N}
\newcommand{\R}{\mathbb R}
\newcommand{\C}{\mathbb C}
\newcommand{\Gc}{\mathcal G}
\newcommand{\Hc}{\mathcal H}
\newcommand{\eps}{\epsilon}
\newcommand{\sct} {s_{\text{c}}}
\newcommand{\sce} {s_{\emph{c}}}
\newcommand{\alct} {\alpha_{\text{c}}}
\newcommand{\alce} {\alpha_{\emph{c}}}
\newcommand{\re}[1]{\mbox{Re}  #1} 
\newcommand{\im}[1]{\mbox{Im}  #1}
\newcommand{\defendproof}{\hfill $\Box$} 
\newtheorem{theorem}{Theorem}[section]
\newtheorem{lemma}[theorem]{Lemma} 
\newtheorem{proposition}[theorem]{Proposition}
\newtheorem{corollary}[theorem]{Corollary} 
\theoremstyle{definition}
\newtheorem{definition}[theorem]{Definition}
\newtheorem{remark}[theorem]{Remark}
\title[Blowup solutions $L^2$-supercritical NLFS]{On Blowup solutions to the focusing $L^2$-supercritical nonlinear fractional Schr\"odinger equation} 
\author[V. D. Dinh]{Van Duong Dinh}
\address[V. D. Dinh]{Institut de Math\'ematiques de Toulouse UMR5219, Universit\'e Toulouse CNRS, 31062 Toulouse Cedex 9, France and Department of Mathematics, HCMC University of Pedagogy, 280 An Duong Vuong, Ho Chi Minh, Vietnam}
\email{dinhvan.duong@math.univ-toulouse.fr}
\keywords{Nonlinear fractional Schr\"odinger equation; Blowup; Concentration; Limiting profile}
\subjclass[2010]{35B44, 35Q55}
\begin{document}

\maketitle
\begin{abstract}
	In this paper we study dynamical properties of blowup solutions to the focusing $L^2$-supercritical nonlinear fractional Schr\"odinger equation
	\[
	i\partial_t -(-\Delta)^s u = -|u|^\alpha u, \quad u(0) = u_0, \quad \text{on } [0,\infty) \times \R^d,
	\]
	where $d \geq 2, \frac{d}{2d-1} \leq s <1$, $\frac{4s}{d}<\alpha<\frac{4s}{d-2s}$ and $u_0 \in \dot{H}^{\sct} 
	\cap \dot{H}^s$ is radial with the critical Sobolev exponent $\sct$. To this end, we establish a compactness lemma related to the equation by means of the profile decomposition for bounded sequences in $\dot{H}^{\sct} \cap \dot{H}^s$. As a result, we obtain the $\dot{H}^{\sct}$-concentration and the limiting profile with critical $\dot{H}^{\sct}$-norm of blowup solutions with bounded $\dot{H}^{\sct}$-norm. 
\end{abstract}


\section{Introduction}
\setcounter{equation}{0}
In this paper, we consider the Cauchy problem for the focusing $L^2$-supercritical nonlinear fractional Schr\"odinger equation
\begin{align}
\left\{
\begin{array}{rcl}
i\partial_t u - (-\Delta)^s u &=& -|u|^{\alpha} u, \quad \text{on } [0,+\infty) \times \R^d, \\
u(0) &=& u_0, 
\end{array}
\right.
\label{focusing intercritical NLFS}
\end{align}
where $u:[0,+\infty) \times \R^d \rightarrow \C$, $s\in (0,1) \backslash \{1/2\}$ and $\alpha>0$. The operator $(-\Delta)^s$ is the fractional Laplacian which is the Fourier multiplier by $|\xi|^{2s}$. The fractional Schr\"odinger equation was discovered by N. Laskin \cite{Laskin} as a result of extending the Feynmann path integral, from the Brownian-like to L\'evy-like quantum mechanical paths. The fractional Schr\"odinger equation also appears in the study of  water waves equations (see e.g. Refs. \cite{IonescuPusateri, Nguyen}). The study of the nonlinear fractional Schr\"odinger equation has attracted a lot of interest in the last decade (see e.g. Refs. \cite{BoulengerHimmelsbachLenzmann, ChoHajaiejHwangOzawa, ChoHwangKwonLee, ChoHwangOzawa, Feng, FrankLenzmann, FrankLenzmannSilvestre, HongSire, IonescuPusateri, KleinSparberMarkowich, PengShi, Zhu} and references cited therein).\newline
\indent The equation $(\ref{focusing intercritical NLFS})$ enjoys the scaling invariance
\[
u_\lambda(t,x):= \lambda^{\frac{2s}{\alpha}} u(\lambda^{2s} t, \lambda x), \quad \lambda >0.
\]
A calculation shows
\[
\|u_\lambda(0)\|_{\dot{H}^\gamma} = \lambda^{\gamma +\frac{2s}{\alpha}-\frac{d}{2}}\|u_0\|_{\dot{H}^\gamma}.
\]
From this, we define the critical Sobolev exponent
\begin{align}
\sct:= \frac{d}{2}-\frac{2s}{\alpha}, \label{critical sobolev exponent}
\end{align}
as well as the critical Lebesgue exponent 
\begin{align}
\alct: = \frac{2d}{d-2\sct} = \frac{d\alpha}{2s}. \label{critical lebesgue exponent}
\end{align}
By definition, we have the Sobolev embedding $\dot{H}^{\sct} \hookrightarrow L^{\alct}$. The equation $(\ref{focusing intercritical NLFS})$ is called $L^2$-subcritical ($L^2$-critical or $L^2$-supercritical) if $\sct<0$ ($\sct=0$ or $\sct>0$) respectively. \newline
\indent The local well-posedness for $(\ref{focusing intercritical NLFS})$ in Sobolev spaces with non-radial initial data was studied in Ref. \cite{HongSire} (see also Ref. \cite{Dinh-fract}). In the non-radial setting, the unitary group $e^{-it(-\Delta)^s}$ enjoys Strichartz estimates (see Ref. \cite{ChoOzawaXia} or Ref. \cite{Dinh-fract}):
\begin{align*}
\|e^{-it(-\Delta)^s} \psi \|_{L^p(\R, L^q)} & \lesssim \||\nabla|^{\gamma_{p,q}} \psi\|_{L^2},
\end{align*}
where $(p,q)$ satisfies the Schr\"odinger admissible condition 
\[
p \in [2,\infty], \quad q \in [2, \infty), \quad (p,q,d) \ne (2,\infty, 2), \quad \frac{2}{p} + \frac{d}{q} \leq \frac{d}{2},
\]
and 
\[
\gamma_{p,q} = \frac{d}{2} - \frac{d}{q}-\frac{2s}{p}.
\]
It is easy to see that the condition $\frac{2}{p} + \frac{d}{q} \leq \frac{d}{2}$ implies $\gamma_{p,q}>0$ for all Schr\"odinger admissible pairs $(p,q)$ except $(p,q)=(\infty,2)$. This means that for non-radial data, Strichartz estimates for $e^{-it(-\Delta)^s}$ have a loss of derivatives except for $(p,q)=(\infty,2)$. This makes the study of local well-posedness in the non-radial case more difficult. The local theory for $(\ref{focusing intercritical NLFS})$ showed in Refs. \cite{HongSire, Dinh-fract} is much weaker than the one for classical nonlinear Schr\"odinger equation, i.e. $s=1$. In particular, in the $\dot{H}^s$-subcritical case (i.e. $\sct<s$) the equation $(\ref{focusing intercritical NLFS})$ is locally well-posed in $H^s$ only for dimensions $d=1,2,3$. The loss of derivatives in Strichartz estimates can be removed if one considers radial initial data. More precisely, we have for $d 
\geq 2$, $\frac{d}{2d-1}\leq s<1$ and $\psi$ radial,
\[
\|e^{-it(-\Delta)^s} \psi\|_{L^p(\R, L^q)} \lesssim \|\psi\|_{L^2},
\]
provided that $(p,q)$ satisfies the fractional admissible condition
\[
p \in [2,\infty], \quad q \in [2,\infty), \quad (p,q) \ne \left(2,\frac{4d-2}{2d-3}\right), \quad \frac{2s}{p} +\frac{d}{q} = \frac{d}{2}. 
\]
These Strichartz estimates with no loss of derivatives allow us to show a better local theory for $(\ref{focusing intercritical NLFS})$ with radial initial data. We refer the reader to Section $\ref{section preliminaries}$ for more details. \newline
\indent The existence of blowup solutions to $(\ref{focusing intercritical NLFS})$ was studied numerically in Ref. \cite{KleinSparberMarkowich}. Later, Boulenger-Himmelsbach-Lenzmann \cite{BoulengerHimmelsbachLenzmann} established blowup criteria for radial $H^s$ solutions to $(\ref{focusing intercritical NLFS})$. Note that in Ref. \cite{BoulengerHimmelsbachLenzmann}, they considered $H^{2s}$ solutions due to the lack of a full local theory at the time of consideration. Thanks to the local theory given in Section $\ref{section preliminaries}$, we can recover $H^s$ solutions by approximation arguments. More precisely, they proved the following: 
\begin{theorem}[Ref. \cite{BoulengerHimmelsbachLenzmann}] \label{theorem blowup criteria Hs} Let $d\geq 2$, $s \in (1/2,1)$ and $\alpha>0$. Let $u_0 \in H^s$ be radial and assume that the corresponding solution to $(\ref{focusing intercritical NLFS})$ exists on the maximal forward time interval $[0,T)$. 
	\begin{itemize}
		\item {\bf Mass-critical case}: If $\sce=0$ or $\alpha=\frac{4s}{d}$ and $E(u_0)<0$, then the solution $u$ either blows up in finite time, i.e. $T<+\infty$ or blows up infinite time, i.e. $T=+\infty$ and 
		\[
		\|u(t)\|_{\dot{H}^s} \geq c t^s, \quad \forall t\geq t_*,
		\]
		for some $C>0$ and $t_*>0$ depending only on $u_0, s$ and $d$. 
		\item {\bf Mass-supercritical and energy-subcritical case}: If $0<\sce<s$ or $\frac{4s}{d}<\alpha<\frac{4s}{d-2s}$ and $\alpha <4s$ and either $E(u_0)<0$, or if $E(u_0) \geq 0$, we assume that
		\[
		E^{\sce}(u_0) M^{s-\sce}(u_0) <E^{\sce}(Q) M^{s-\sce}(Q), \quad \|u_0\|^{\sce}_{\dot{H}^s} \|u_0\|^{s-\sce}_{L^2} > \|Q\|^{\sce}_{\dot{H}^s} \|Q\|^{s-\sce}_{L^2},
		\]
		where $Q$ is the unique (up to symmetries) positive radial solution to the elliptic equation
		\[
		(-\Delta)^s Q + Q - |Q|^\alpha Q=0,
		\]
		then the solution blows up in finite time, i.e. $T<+\infty$. 
		\item {\bf Energy-critical case}: If $\sce=s$ or $\alpha=\frac{4s}{d-2s}$ and $\alpha<4s$ and either $E(u_0)<0$, or if $E(u_0) \geq 0$, we assume that
		\[
		E(u_0)<E(W), \quad \|u_0\|_{\dot{H}^s} > \|W\|_{\dot{H}^s},
		\]
		where $W$ is the unique (up to symmetries) positive radial solution to the elliptic equation
		\[
		(-\Delta)^s W -|W|^{\frac{4s}{d-2s}} W=0,
		\]
		then the solution blows up in finite time, i.e. $T<+\infty$. 
	\end{itemize}
	Here $M(u)$ and $E(u)$ are the conserved mass and energy respectively. 
\end{theorem}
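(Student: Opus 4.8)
The plan is to establish Theorem~\ref{theorem blowup criteria Hs} via the localized virial (Morawetz) estimate of \cite{BoulengerHimmelsbachLenzmann}, worked out first for solutions of regularity $H^{2s}$, where the manipulations below are legitimate, and then transferred to radial $u_0 \in H^s$ by an approximation argument based on the local well-posedness and stability theory recalled in Section~\ref{section preliminaries}. Fix a radial $\varphi \in C^\infty_c(\R^d)$ with $\varphi(x)=|x|^2$ for $|x|\leq 1$, $\varphi$ constant for $|x|\geq 2$ and $\partial_r^2\varphi\leq 2$, and for $R>0$ set $\varphi_R(x):=R^2\varphi(x/R)$. The localized virial quantity
\[
M_{\varphi_R}[u(t)] := 2\,\im{\int_{\R^d} \overline{u(t,x)}\,\nabla\varphi_R(x)\cdot\nabla u(t,x)\,dx}
\]
satisfies, because $s\in(1/2,1)$, the a priori bound
\[
|M_{\varphi_R}[u(t)]| \lesssim R\Big(\|u_0\|_{L^2}^2 + \|u_0\|_{L^2}^{2-\frac{1}{s}}\|u(t)\|_{\dot{H}^s}^{\frac{1}{s}}\Big),
\]
in which the exponent $\frac{1}{s}$ of $\|u(t)\|_{\dot{H}^s}$ is strictly smaller than $2$; this is the structural feature that will turn a one-sided differential inequality into finite-time blowup.

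The core step is the identity for $\frac{d}{dt}M_{\varphi_R}[u(t)]$, which I would obtain as in \cite{BoulengerHimmelsbachLenzmann} by inserting the Balakrishnan representation $(-\Delta)^s = \frac{\sin\pi s}{\pi}\int_0^\infty m^{s-1}\frac{-\Delta}{-\Delta+m}\,dm$, thereby reducing the nonlocal operator to the Laplacian acting on the auxiliary functions $u_m:=(-\Delta+m)^{-1}u$. This yields
\[
\frac{d}{dt}M_{\varphi_R}[u(t)] = 8s\,\|u(t)\|_{\dot{H}^s}^2 - \frac{4d\alpha}{\alpha+2}\,\|u(t)\|_{L^{\alpha+2}}^{\alpha+2} + \mathcal{E}_R[u(t)],
\]
where $\mathcal{E}_R[u(t)]$ collects the contributions of $\nabla^2\varphi_R-2\,\mathrm{Id}$, $\Delta^2\varphi_R$ and $\Delta\varphi_R-2d$, all supported in $\{|x|\geq R\}$. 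Its kinetic part, built from the $u_m$, is controlled — after integrating over $m\in(0,\infty)$ and using the smoothing of $(-\Delta+m)^{-1}$ — by $CR^{-2s}\|u_0\|_{L^2}^2$; its nonlinear part is controlled, using the radial symmetry, by the radial Sobolev inequality $\big\||x|^{\frac{d-1}{2}}v\big\|_{L^\infty}\lesssim\|v\|_{L^2}^{1-\frac{1}{2s}}\|v\|_{\dot{H}^s}^{\frac{1}{2s}}$, which gives a contribution of order $R^{-\frac{(d-1)\alpha}{2}}\|u_0\|_{L^2}^{\theta}\|u(t)\|_{\dot{H}^s}^{\frac{\alpha}{2s}}$ for an explicit $\theta>0$. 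Eliminating $\|u(t)\|_{L^{\alpha+2}}^{\alpha+2}$ via conservation of energy, the identity becomes
\[
\frac{d}{dt}M_{\varphi_R}[u(t)] = 4d\alpha\,E(u_0) - (2d\alpha-8s)\,\|u(t)\|_{\dot{H}^s}^2 + \mathcal{E}_R[u(t)].
\]
The hypothesis $\alpha<4s$ enters precisely here: it forces $\frac{\alpha}{2s}<2$, so the $\dot{H}^s$-dependent part of $\mathcal{E}_R$ can be absorbed, by Young's inequality, into the coercive term $-(2d\alpha-8s)\|u(t)\|_{\dot{H}^s}^2$ when $\sct>0$, and is a harmless perturbation otherwise; it also excludes, in particular, the energy-critical problem in dimension $d=2$.

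The three regimes are then closed by an ODE argument applied to $t\mapsto M_{\varphi_R}[u(t)]$. \emph{Mass-supercritical and energy-subcritical with $E(u_0)<0$:} taking $R$ large gives $\frac{d}{dt}M_{\varphi_R}[u(t)]\leq 2d\alpha E(u_0)-\tfrac{1}{2}(2d\alpha-8s)\|u(t)\|_{\dot{H}^s}^2<0$. \emph{Mass-supercritical and energy-subcritical with $E(u_0)\geq 0$:} one first runs a variational/continuity argument of Holmer--Roudenko type — combining the sharp fractional Gagliardo--Nirenberg inequality (optimizer $Q$) with the assumptions on $E^{\sct}(u_0)M^{s-\sct}(u_0)$ and $\|u_0\|_{\dot{H}^s}^{\sct}\|u_0\|_{L^2}^{s-\sct}$ and with mass and energy conservation — to produce $\delta>0$ such that $\|u(t)\|_{\dot{H}^s}^{\sct}\|u_0\|_{L^2}^{s-\sct}\geq(1+\delta)\|Q\|_{\dot{H}^s}^{\sct}\|Q\|_{L^2}^{s-\sct}$ for all $t$; inserted in the virial identity this gives $\frac{d}{dt}M_{\varphi_R}[u(t)]\leq -c<0$ for $R$ large. \emph{Energy-critical:} the same, with $Q$ replaced by the optimizer $W$ of the relevant Sobolev inequality, using $E(W)=\frac{s}{d}\|W\|_{\dot{H}^s}^2$ and the hypotheses $E(u_0)<E(W)$, $\|u_0\|_{\dot{H}^s}>\|W\|_{\dot{H}^s}$. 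In all these cases the constant negative drift drives $-M_{\varphi_R}[u(t)]$ to large values in finite time, after which the a priori bound above yields $\|u(t)\|_{\dot{H}^s}^2\gtrsim\big(-M_{\varphi_R}[u(t)]\big)^{2s}$, so the differential inequality closes as $\frac{d}{dt}\big(-M_{\varphi_R}[u(t)]\big)\gtrsim\big(-M_{\varphi_R}[u(t)]\big)^{2s}$ with $2s>1$, forcing $T<+\infty$. \emph{Mass-critical:} now $2d\alpha-8s=0$, the coercive term is absent, and one only has $\frac{d}{dt}M_{\varphi_R}[u(t)]\leq 4d\alpha E(u_0)+CR^{-2s}\|u_0\|_{L^2}^2+CR^{-\frac{(d-1)\alpha}{2}}\|u_0\|_{L^2}^{\theta}\|u(t)\|_{\dot{H}^s}^{2/d}$ (note $\frac{\alpha}{2s}=\frac{2}{d}$); integrating this, optimizing the localization radius $R$ (if need be letting it depend on time) and using conservation of mass then yields the stated dichotomy — either $T<+\infty$, or $T=+\infty$ and $\|u(t)\|_{\dot{H}^s}\geq c\,t^{s}$ for $t\geq t_*$. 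Finally, for general radial $u_0\in H^s$ satisfying the hypotheses, one approximates it in $H^s$ by radial $H^{2s}$ data for which the same (open) hypotheses hold, applies the $H^{2s}$ conclusion, and passes to the limit via the stability statement of Section~\ref{section preliminaries} on compact time intervals, the uniformity of the virial differential inequality preventing the approximate blowup times from escaping to $+\infty$.

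I expect the main obstacle to be the rigorous derivation and, above all, the estimation of $\mathcal{E}_R[u(t)]$: one must perform the $m$-integration of the kinetic remainder terms built from $(-\Delta+m)^{-1}u$ and extract from it the gain $R^{-2s}$, and one must bound the nonlinear remainder by a radial Sobolev estimate whose $\dot{H}^s$-exponent $\frac{\alpha}{2s}$ is required to be strictly below the threshold value $2$ — which is exactly the reason the result is stated under the restriction $\alpha<4s$. The variational/continuity step in the $E(u_0)\geq 0$ cases and the time-dependent choice of radius in the mass-critical case are the other points that need care, but they follow established patterns once the virial estimate is in place.
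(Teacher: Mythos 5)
This theorem is quoted from Boulenger--Himmelsbach--Lenzmann rather than reproved in the paper, and your sketch follows exactly the localized virial strategy of that reference, which the paper itself reuses in its proof of Proposition \ref{prop blowup criteria}: the Balakrishnan representation to handle $(-\Delta)^s$, the radial Sobolev control of the exterior nonlinear term under the restriction $\alpha<4s$, and the closing ODE inequality $\frac{d}{dt}\bigl(-M_{\varphi_R}\bigr)\gtrsim\bigl(-M_{\varphi_R}\bigr)^{2s}$ with $2s>1$. Your outline is correct and matches the intended argument, including the $H^{2s}\to H^s$ approximation step that the paper explicitly invokes.
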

The blowup criteria of Boulenger-Himmelsbach-Lenzmann \cite{BoulengerHimmelsbachLenzmann} naturally lead to the study of dynamical properties such as blowup rate, concentration and limiting profile,.. of blowup solutions to $(\ref{focusing intercritical NLFS})$. \newline
\indent In the mass-critical case $\sct=0$ or $\alpha=\frac{4s}{d}$, the dynamics of blowup $H^s$ solutions was recently considered in Ref. \cite{Dinh-masscritical} (see also Ref. \cite{Feng}). The study of blowup $H^s$ solutions to the focusing mass-critical nonlinear fractional Schr\"odinger equation is connected to the notion of ground state which is the unique (up to symmetries) positive radial solution of the elliptic equation
\begin{align}
(-\Delta)^s Q + Q - |Q|^{\frac{4s}{d}}Q =0. \label{elliptic mass-critical}
\end{align}
Note that the existence and uniqueness (modulo symmetries) of ground state to $(\ref{elliptic mass-critical})$ were shown in Refs. \cite{FrankLenzmann, FrankLenzmannSilvestre}. Using the sharp Gagliardo-Nirenberg inequality
\[
\|f\|_{L^{\frac{4s}{d}+2}}^{\frac{4s}{d}+2} \leq C_{\text{GN}} \|f\|_{L^2}^{\frac{4s}{d}} \|f\|^2_{\dot{H}^s},
\]
with
\[
C_{\text{GN}} = \frac{2s+d}{d} \|Q\|^{-\frac{4s}{d}}_{L^2},
\]
the conservation of mass and energy show that if $u_0 \in H^s$ satisfies $\|u_0\|_{L^2} <\|Q\|_{L^2}$, then the corresponding solution exists globally in time. This suggests that $\|Q\|_{L^2}$ is the critical mass for formation of singularities. To study dynamical properties of blowup $H^s$ solutions to the mass-critical $(\ref{focusing intercritical NLFS})$, the author in Ref. \cite{Dinh-masscritical} proved a compactness lemma related to the equation by means of the profile decomposition for bounded sequences in $H^s$. 
\begin{proposition}[Compactness lemma \cite{Dinh-masscritical}] Let $d\geq 1$ and $0<s<1$. Let $(v_n)_{n\geq 1}$ be a bounded sequence in $H^s$ such that
	\[
	\limsup_{n\rightarrow \infty} \|v_n\|_{\dot{H}^s} \leq M, \quad \limsup_{n\rightarrow \infty} \|v_n\|_{L^{\frac{4s}{d}+2}} 
	\geq m.
	\]
	Then there exists a sequence $(x_n)_{n\geq 1}$ in $\R^d$ such that up to a subsequence,
	\[
	v_n(\cdot + x_n) \rightharpoonup V \text{ weakly in } H^s,
	\]
	for some $V \in H^s$ satisfying
	\[
	\|V\|_{L^2}^{\frac{4s}{d}} \geq \frac{d}{d+2s} \frac{m^{\frac{4s}{d}+2}}{M^2} \|Q\|^{\frac{4s}{d}}_{L^2}.
	\]
\end{proposition}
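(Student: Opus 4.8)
The plan is to run the profile decomposition machinery for bounded sequences in $H^s$ and then feed the resulting profiles into the sharp Gagliardo--Nirenberg inequality. First I would invoke the profile decomposition for the bounded sequence $(v_n)$ in $H^s$ (the embedding $H^s \hookrightarrow L^{\frac{4s}{d}+2}$ being subcritical, translations are the only source of non-compactness): up to a subsequence there exist profiles $(V^j)_{j\geq 1} \subset H^s$ and sequences of translations $(x_n^j)_n \subset \R^d$ which are pairwise orthogonal, $|x_n^j - x_n^k| \to \infty$ as $n \to \infty$ for $j \neq k$, such that for every $\ell \geq 1$,
\[
v_n = \sum_{j=1}^\ell V^j(\cdot - x_n^j) + v_n^\ell, \qquad \limsup_{n\to\infty}\|v_n^\ell\|_{L^{\frac{4s}{d}+2}} \xrightarrow[\ell\to\infty]{} 0,
\]
together with the Pythagorean expansions
\[
\|v_n\|_{L^2}^2 = \sum_{j=1}^\ell \|V^j\|_{L^2}^2 + \|v_n^\ell\|_{L^2}^2 + o_n(1), \qquad \|v_n\|_{\dot H^s}^2 = \sum_{j=1}^\ell \|V^j\|_{\dot H^s}^2 + \|v_n^\ell\|_{\dot H^s}^2 + o_n(1).
\]
Moreover, translating by $x_n^j$ and using the orthogonality of the cores one gets $v_n(\cdot + x_n^j) \rightharpoonup V^j$ weakly in $H^s$ along the subsequence.

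Next, I would use the orthogonality of the translation parameters together with the smallness of $v_n^\ell$ in $L^{\frac{4s}{d}+2}$ to obtain the asymptotic decoupling of the nonlinear term,
\[
\limsup_{n\to\infty}\|v_n\|_{L^{\frac{4s}{d}+2}}^{\frac{4s}{d}+2} = \sum_{j\geq 1}\|V^j\|_{L^{\frac{4s}{d}+2}}^{\frac{4s}{d}+2}.
\]
Combining this with the hypothesis $\limsup_n \|v_n\|_{L^{\frac{4s}{d}+2}} \geq m$ and applying the sharp Gagliardo--Nirenberg inequality to each profile gives
\[
m^{\frac{4s}{d}+2} \leq \sum_{j\geq 1}\|V^j\|_{L^{\frac{4s}{d}+2}}^{\frac{4s}{d}+2} \leq C_{\text{GN}} \sum_{j\geq 1}\|V^j\|_{L^2}^{\frac{4s}{d}}\|V^j\|_{\dot H^s}^2 \leq C_{\text{GN}}\Big(\sup_{j\geq 1}\|V^j\|_{L^2}^{\frac{4s}{d}}\Big)\sum_{j\geq 1}\|V^j\|_{\dot H^s}^2.
\]
By the Pythagorean expansion for $\|\cdot\|_{\dot H^s}^2$ and the hypothesis $\limsup_n\|v_n\|_{\dot H^s} \leq M$ we have $\sum_j \|V^j\|_{\dot H^s}^2 \leq M^2$; likewise $\sum_j \|V^j\|_{L^2}^2 < \infty$, so $\|V^j\|_{L^2}\to 0$ and the supremum above is a maximum, attained at some index $j_0$.

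Finally, inserting $C_{\text{GN}} = \frac{2s+d}{d}\|Q\|_{L^2}^{-\frac{4s}{d}}$ and rearranging yields
\[
\|V^{j_0}\|_{L^2}^{\frac{4s}{d}} \geq \frac{m^{\frac{4s}{d}+2}}{C_{\text{GN}} M^2} = \frac{d}{d+2s}\frac{m^{\frac{4s}{d}+2}}{M^2}\|Q\|_{L^2}^{\frac{4s}{d}},
\]
so the conclusion follows with $V := V^{j_0}$ and $x_n := x_n^{j_0}$.

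I expect the main obstacle to be the first step, namely establishing the $H^s$-profile decomposition with all its orthogonality properties, and in particular proving the asymptotic orthogonality of the $L^{\frac{4s}{d}+2}$-norms needed for the decoupling identity above; this is where the fractional Sobolev structure enters and is the only non-elementary ingredient. Once that is available, the remaining steps are the routine bookkeeping with the sharp Gagliardo--Nirenberg constant carried out above.
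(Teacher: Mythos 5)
Your proposal is correct and follows essentially the same route as the paper: the paper states this proposition as a citation and proves the direct analogue (Theorem \ref{theorem compactness lemma}) by exactly this scheme --- profile decomposition, decoupling of the $L^{\alpha+2}$-norm via the elementary inequality and pairwise orthogonality of the cores, sharp Gagliardo--Nirenberg applied profile by profile, and the Pythagorean bound $\sum_j\|V^j\|_{\dot H^s}^2\leq M^2$. The only step you compress is the verification that $v_n(\cdot+x_n^{j_0})$ converges weakly to $V^{j_0}$ itself (the paper shows the residual's weak limit vanishes using its $L^{\alpha+2}$-smallness and uniqueness of weak limits), and note that only the inequality $\limsup_n\|v_n\|_{L^{4s/d+2}}^{4s/d+2}\leq\sum_j\|V^j\|_{L^{4s/d+2}}^{4s/d+2}$ is actually needed, not the equality you assert.
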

Thanks to this compactness lemma, the author in Ref. \cite{Dinh-masscritical} showed that the $L^2$-norm of blowup solutions must concentrate by an amount which is bounded from below by $\|Q\|_{L^2}$ at the blowup time. He also showed the limiting profile of blowup solutions with minimal mass $\|u_0\|_{L^2}=\|Q\|_{L^2}$, that is, up to symmetries of the equation, the ground state $Q$ is the profile for blowup solutions with minimal mass. \newline
\indent The main goal of this paper is to study dynamical properties of blowup solutions to $(\ref{focusing intercritical NLFS})$ in the mass-supercritical and energy-subcritical case with initial data in $\dot{H}^{\sct} \cap \dot{H}^s$. To this end, we first show the local well-posedness for $(\ref{focusing intercritical NLFS})$ with initial data in $\dot{H}^{\sct} \cap \dot{H}^s$. For data in $H^s$, the local well-posedness in non-radial and radial cases was showed in Refs. \cite{HongSire, Dinh-masscritical}. In the non-radial setting, the inhomogeneous Sobolev embedding $W^{s,q} \hookrightarrow L^r$ plays a crucial role (see e.g. Ref. \cite{HongSire}). Since we are considering data in $\dot{H}^{\sct} \cap \dot{H}^s$, the inhomogeneous Sobolev embedding does not help. We thus have to rely on Strichartz estimates without loss of derivatives and the homogeneous Sobolev embedding $\dot{W}^{s,q} \hookrightarrow L^r$. We hence restrict ourself to radially symmetric initial data, $d\geq 2$ and $\frac{d}{2d-1} \leq s<1$ for which Strichartz estimates without loss of derivatives are available. After the local theory is established, we show the existence of blowup $\dot{H}^{\sct} \cap \dot{H}^s$ solutions. The existence of blowup $H^s$ solutions for $(\ref{focusing intercritical NLFS})$ was shown in Ref. \cite{BoulengerHimmelsbachLenzmann} (see Theorem $\ref{theorem blowup criteria Hs}$). Note that the conservation of mass plays a crucial role in the argument of Ref. \cite{BoulengerHimmelsbachLenzmann}. In our consideration, the lack of mass conservation laws makes the problem more difficult. We are only able to show blowup criteria for negative energy intial data in $\dot{H}^{\sct} \cap \dot{H}^s$ with an additional assumption
\begin{align}
\sup_{t \in [0,T)} \|u(t)\|_{\dot{H}^{\sct}} <\infty, \label{bounded intro}
\end{align}
where $[0,T)$ is the maximal forward time of existence. In the mass-critical case $\sct=0$, this assumption holds trivially by the conservation of mass. We refer to Section $\ref{section preliminaries}$ for more details. To study blowup dynamics for data in $\dot{H}^{\sct} \cap \dot{H}^s$, we prove the profile decomposition for bounded sequences in $\dot{H}^{\sct} \cap \dot{H}^s$ which is proved by following the argument of Ref. \cite{HmidiKeraani} (see also Refs. \cite{Guoblowup, Dinh-blowupfourth}). This profile decomposition allows us to study the variational structure of the sharp constant to the Gagliardo-Nirenberg inequality
\begin{align}
\|f\|^{\alpha+2}_{L^{\alpha+2}} \leq A_{\text{GN}} \|f\|^{\alpha}_{\dot{H}^{\sct}} \|f\|^2_{\dot{H}^s}. \label{sharp gagliardo-nirenberg intro}
\end{align}
We will see in Proposition $\ref{prop variational structure sharp constants}$ that the sharp constant $A_{\text{GN}}$ is attained at a function $U \in \dot{H}^{\sct} \cap \dot{H}^s$ of the form
\[
U(x) = a Q(\lambda x + x_0),
\]
for some $a \in \C^*$, $\lambda>0$ and $x_0 \in \R^d$, where $Q$ is a solution to the elliptic equation
\begin{align*}
(-\Delta)^s Q +  (-\Delta)^{\sct} Q - |Q|^\alpha Q =0. 
\end{align*}
Moreover, 
\[
A_{\text{GN}} = \frac{\alpha+2}{2} \|Q\|^{-\alpha}_{\dot{H}^{\sct}}.
\]
The sharp Gagliardo-Nirenberg inequality $(\ref{sharp gagliardo-nirenberg intro})$ together with the conservation of energy yield the global existence for solutions satisfying
\[
\sup_{t\in [0,T)} \|u(t)\|_{\dot{H}^{\sct}} <\|Q\|_{\dot{H}^{\sct}}. 
\]
Another application of the profile decomposition is the compactness lemma, that is, for any bounded sequence $(v_n)_{n\geq 1}$ in $\dot{H}^{\sct} \cap \dot{H}^s$ satisfying
\[
\limsup_{n\rightarrow \infty} \|v_n\|_{\dot{H}^s} \leq M, \quad \limsup_{n\rightarrow \infty} \|v_n\|_{L^{\alpha+2}} \geq m,
\]
there exists a sequence $(x_n)_{n\geq 1}$ in $\R^d$ such that up to a subsequence,
\[
v_n(\cdot + x_n) \rightharpoonup V \text{ weakly in } \dot{H}^{\sct} \cap \dot{H}^s,
\]
for some $V \in \dot{H}^{\sct} \cap \dot{H}^s$ satisfying
\begin{align*}
\|V\|^\alpha_{\dot{H}^{\sct}} \geq \frac{2}{\alpha+2} \frac{ m^{\alpha+2}}{M^2} \|Q\|_{\dot{H}^{\sct}}^\alpha. 
\end{align*}
As a consequence, we show that the $\dot{H}^{\sct}$-norm of blowup solutions satisfying $(\ref{bounded intro})$ must concentrate by an amount which is bounded from below by $\|Q\|_{\dot{H}^{\sct}}$ at the blowup time (see Theorem $\ref{theorem concentration NLFS}$). We finally show in Theorem $\ref{theorem limiting profile critical norm}$ the limiting profile of blowup solutions with critical norm 
\begin{align}
\sup_{t\in [0,T)} \|u(t)\|_{\dot{H}^{\sct}} = \|Q\|_{\dot{H}^{\sct}}. \label{critical norm intro}
\end{align}
The paper is organized as follows. In Section $\ref{section preliminaries}$, we recall Strichartz estimates and show the local well-posednesss for data in $\dot{H}^{\sct} \cap \dot{H}^s$. We also prove blowup criteria for negative energy data in $\dot{H}^{\sct} \cap \dot{H}^s$ as well as the profile decomposition of bounded sequences in $\dot{H}^{\sct} \cap \dot{H}^s$. In Section $\ref{section variational analysis}$, we give some applications of the profile decomposition including the sharp Gagliardo-Nirenberg inequality $(\ref{sharp gagliardo-nirenberg intro})$ and the compactness lemma. In Section $\ref{section blowup concentration}$, we show the $\dot{H}^{\sct}$-concentration of blowup solutions. Finally, the limiting profile of blowup solutions with critical norm $(\ref{critical norm intro})$ will be given in Section $\ref{section limiting profile}$.
\section{Preliminaries} \label{section preliminaries}
\setcounter{equation}{0}
\subsection{Homogeneous Sobolev spaces}
We recall the definition of homogeneous Sobolev spaces needed in the sequel (see e.g. Refs. \cite{BerghLofstom}, \cite{GinibreVelo} or \cite{Triebel}). Denote $\mathcal{S}_0$ the subspace of the Schwartz space $\mathcal{S}$ consisting of functions $\phi$ satisfying $D^\beta \hat{\phi}(0) =0$ for all $\beta \in \N^d$, where $\hat{\cdot}$ is the Fourier transform on $\mathcal{S}$. Given $\gamma \in \R$ and $1 \leq q \leq \infty$, the generalized homogeneous Sobolev space $\dot{W}^{\gamma,q}$ is defined as a closure of $\mathcal{S}_0$ under the norm
\[
\|u\|_{\dot{W}^{\gamma,q}} := \||\nabla|^\gamma u\|_{L^q} <\infty.
\]
Under this setting, the spaces $\dot{W}^{\gamma,q}$ are Banach spaces. We shall use $\dot{H}^\gamma:= \dot{W}^{\gamma,2}$. Note that the spaces $\dot{H}^{\gamma_1}$ and $\dot{H}^{\gamma_2}$ cannot be compared for the inclusion. Nevertheless, for $\gamma_1 < \gamma < \gamma_2$, the space $\dot{H}^{\gamma}$ is an interpolation space between $\dot{H}^{\gamma_1}$ and $\dot{H}^{\gamma_2}$.
\subsection{Strichartz estimates}
We next recall Strichartz estimates for the fractional Schr\"odinger equation. To do so, we define for $I \subset \R$ and $p,q\in [1,\infty]$ the mixed norm 
\[
\|u\|_{L^p(I, L^q)} := \Big( \int_I \Big(\int_{\R^d} |u(t,x)|^q dx \Big)^{\frac{p}{q}} \Big)^{\frac{1}{p}},
\]
with a usual modification when either $p$ or $q$ are infinity. The unitary group $e^{-it(-\Delta)^s}$ enjoys several types of Strichartz estimates, for instance non-radial Strichartz estimates, radial Strichartz estimates and weighted Strichartz estimates (see e.g. Ref. \cite{ChoHajaiejHwangOzawa}). We only recall here two types: non-radial and radial Strichartz estimates. 
\begin{itemize}
	\item {\bf Non-radial Strichartz estimates (see e.g. Refs. \cite{ChoOzawaXia, Dinh-fract}):} for $d\geq 1$ and $s\in (0,1) \backslash \{1/2\}$, the following estimates hold:
	\begin{align*}
	\|e^{-it(-\Delta)^s} \psi\|_{L^p(\R, L^q)} & \lesssim \||\nabla|^{\gamma_{p,q}} \psi\|_{L^2}, \\
	\Big\| \int_0^t e^{-i(t-\tau)(-\Delta)^s} f(\tau) d\tau \Big\|_{L^p(\R, L^q)} &\lesssim \||\nabla|^{\gamma_{p,q}-\gamma_{a',b'}-2s} f\|_{L^{a'}(\R, L^{b'})},
	\end{align*}
	where $(p,q)$ and $(a,b)$ are Schr\"odinger admissible pairs, i.e.
	\[
	p \in [2,\infty], \quad q \in [2,\infty), \quad (p,q,d) \ne (2, \infty,2), \quad \frac{2}{p}+\frac{d}{q} \leq \frac{d}{2},
	\]
	and 
	\[
	\gamma_{p,q} = \frac{d}{2}-\frac{d}{q}-\frac{2s}{p},
	\]
	and similarly for $\gamma_{a',b'}$. As mentioned in the introduction, these Strichartz estimates have a loss of derivatives except for $(p,q)=(a,b)=(\infty,2)$. 
	\item {\bf Radial Strichartz estimates (see e.g. Refs. \cite{ChoLee}, \cite{GuoWang} or \cite{Ke}):} for $d\geq 2$ and $\frac{d}{2d-1} \leq s <1$, the following estimates hold:
	\begin{align}
	\|e^{-it(-\Delta)^s} \psi\|_{L^p(\R, L^q)} & \lesssim \| \psi\|_{L^2}, \label{radial strichartz homogeneous}\\
	\Big\| \int_0^t e^{-i(t-\tau)(-\Delta)^s} f(\tau) d\tau \Big\|_{L^p(\R, L^q)} &\lesssim \|f\|_{L^{a'}(\R, L^{b'})}, \label{radial strichartz inhomogeneous}
	\end{align}
	where $\psi$ and $f$ are radially symmetric and $(p,q), (a,b)$ sastisfy the fractional admissible condition:
	\begin{align}
	p\in [2,\infty], \quad q \in [2, \infty), \quad (p,q) \ne \left(2, \frac{4d-2}{2d-3} \right), \quad \frac{2s}{p}+\frac{d}{q} = \frac{d}{2}. \label{fractional admissible}
	\end{align}
\end{itemize}
\subsection{Local well-posedness} \label{subsection local well posedness}
In this subsection, we show the local well-posedness for $(\ref{focusing intercritical NLFS})$ with initial data in $\dot{H}^{\sct} \cap \dot{H}^s$. Before entering some details, let us recall the local well-posedness for $(\ref{focusing intercritical NLFS})$ with initial data in $H^s$. 
\begin{proposition} [Local well-posedness in $H^s$ \cite{Dinh-masscritical}] \label{prop local well-posedness Hs}
	Let
	\begin{align}
	\renewcommand{\arraystretch}{1.2}
	\left\{
	\begin{array}{llll}
	d=1, & \frac{1}{3}<s<\frac{1}{2}, & 0<\alpha<\frac{4s}{1-2s}, & u_0 \in H^s \text{ non-radial}, \\
	d=1, & \frac{1}{2}<s<1, & 0<\alpha<\infty, & u_0 \in H^s \text{ non-radial}, \\
	d=2, & \frac{1}{2}<s<1, & 0<\alpha<\frac{4s}{2-2s}, & u_0 \in H^s \text{ non-radial}, \\
	d=3, & \frac{3}{5} \leq s \leq \frac{3}{4}, & 0<\alpha<\frac{4s}{3-2s}, & u_0 \in H^s \text{ radial}, \\
	d=3, & \frac{3}{4}<s<1, & 0<\alpha<\frac{4s}{3-2s}, & u_0 \in H^s \text{ non-radial}, \\
	d\geq 4, & \frac{d}{2d-1} \leq s <1, & 0<\alpha<\frac{4s}{d-2s}, & u_0 \in H^s \text{ radial}.
	\end{array}
	\right.
	\label{condition LWP Hs}
	\end{align}
	Then the equation $(\ref{focusing intercritical NLFS})$ is locally well-posed in $H^s$. In addition, the maximal forward time of existence satisfies either $T=+\infty$ or $T<+\infty$ and $\lim_{t\uparrow T} \|u\|_{\dot{H}^s} =\infty$. Moreover, the solution enjoys the conservation of mass and energy, i.e. $M(u(t)) = M(u_0)$ and $E(u(t))=E(u_0)$ for all $t\in [0,T)$, where
	\begin{align*}
	M(u(t)) &= \int |u(t,x)|^2 dx, \\
	E(u(t)) &= \frac{1}{2} \int |(-\Delta)^{s/2} u(t,x)|^2 dx -\frac{1}{\alpha+2} \int |u(t,x)|^{\alpha+2}dx.
	\end{align*}
\end{proposition}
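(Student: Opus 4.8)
The standard route to a local well-posedness statement of this type is a contraction-mapping argument applied to the Duhamel operator
\[
\Phi(u)(t) := e^{-it(-\Delta)^s} u_0 + i \int_0^t e^{-i(t-\tau)(-\Delta)^s} \big( |u|^\alpha u \big)(\tau)\, d\tau
\]
on a ball $X_T := \{ u : \|u\|_{L^\infty([0,T],H^s)} + \|u\|_{Y_T} \le K \}$, where $Y_T$ is a finite intersection of Strichartz norms of the form $L^p([0,T],\dot{W}^{s,q})$, $K \sim \|u_0\|_{H^s}$, and $X_T$ is made complete by equipping it with the weaker metric coming from the $L^p([0,T],L^q)$ norms (no derivatives). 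First I would fix the admissible exponents according to the geometry: when $u_0$ is taken radial ($d = 3$ with $\tfrac{3}{5} \le s \le \tfrac{3}{4}$, and $d \ge 4$), use the loss-free radial Strichartz estimates $(\ref{radial strichartz homogeneous})$--$(\ref{radial strichartz inhomogeneous})$ with fractional-admissible pairs satisfying $(\ref{fractional admissible})$; in the non-radial cases ($d = 1, 2$ and $d = 3$ with $\tfrac{3}{4} < s < 1$), use the non-radial Strichartz estimates, which lose $\gamma_{p,q} > 0$ derivatives, and reabsorb this loss via the inhomogeneous Sobolev embedding $W^{s,q} \hookrightarrow L^r$ --- a device available precisely because the data here live in the inhomogeneous space $H^s$, unlike in the $\dot{H}^{\sct} \cap \dot{H}^s$ theory developed later.

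The core of the argument is the nonlinear estimate: one must bound $\big\| |u|^\alpha u \big\|$ in the appropriate dual Strichartz space by $C T^\theta \|u\|_{Y_T}^{\alpha+1}$ with $\theta > 0$, together with the analogous Lipschitz bound for $|u|^\alpha u - |v|^\alpha v$, so that for $T$ small enough $\Phi$ maps $X_T$ to itself and is a contraction. To place the $H^s$-derivatives on the nonlinearity I would combine the fractional Leibniz rule with the fractional chain rule for $z \mapsto |z|^\alpha z$; when $\alpha$ is not an even integer the derivative of this map is only H\"older continuous of order $\min(\alpha,1)$, so the Christ--Weinstein-type fractional chain rule (or an equivalent commutator estimate) must be used, and this is what couples the admissible range of $s$ to $\alpha$. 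The Lebesgue exponents so produced are matched back to the admissible pairs by $\dot{W}^{s,q} \hookrightarrow L^r$ (or $H^s \hookrightarrow L^r$), and the gain $T^\theta$ comes from H\"older in time, which is possible exactly because $\alpha < \tfrac{4s}{d-2s}$, i.e.\ the problem is $\dot{H}^s$-subcritical. Once the fixed point $u$ is found, uniqueness in the Strichartz class, continuous dependence on $u_0$, and persistence of $H^\sigma$-regularity for $\sigma \ge s$ follow from the same estimates; the blowup alternative holds because the local existence time depends only on $\|u_0\|_{H^s}$ while the $L^2$-norm is conserved, so a finite maximal time $T$ forces $\|u(t)\|_{\dot{H}^s} \to \infty$ as $t \uparrow T$. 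Conservation of mass and energy is first verified for smooth solutions by differentiating $M(u(t))$ and $E(u(t))$ and using the equation, then extended to $H^s$ data by approximating $u_0$ by smooth data and invoking continuous dependence.

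I expect the main obstacle to be the nonlinear estimate in the non-radial cases, where the derivative loss $\gamma_{p,q} > 0$ in Strichartz must be reabsorbed: it is exactly this loss that confines the non-radial theory to $d = 1, 2, 3$ --- in higher dimensions no Schr\"odinger-admissible pair leaves enough room in the Sobolev embedding --- and that forces the restriction to radial data and to $\tfrac{d}{2d-1} \le s < 1$, where the loss-free estimates $(\ref{radial strichartz homogeneous})$--$(\ref{radial strichartz inhomogeneous})$ are available, in order to treat general dimensions. A secondary technical difficulty is the low regularity of $|u|^\alpha u$ when $\alpha$ is not an even integer (in particular when $0 < \alpha < 1$): the fractional chain rule must then be applied with a H\"older modulus of continuity, which both complicates the nonlinear estimates and limits how large $s$ may be taken relative to $\alpha$.
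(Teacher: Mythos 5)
Your proposal is correct and follows essentially the same route as the source: the paper itself does not reprove this proposition (it is quoted from Refs.~\cite{HongSire, Dinh-masscritical}), but the strategy you describe --- contraction mapping for the Duhamel operator on a Strichartz ball with a weaker $L^pL^q$ metric, loss-free radial Strichartz in the radial cases versus non-radial Strichartz plus the inhomogeneous Sobolev embedding $W^{s,q}\hookrightarrow L^r$ to absorb the derivative loss in low dimensions, the Christ--Weinstein fractional chain rule for the nonlinearity, a $T^\theta$ gain from subcriticality, and conservation laws by approximation --- is exactly the argument of those references and mirrors the paper's own proof of the analogous Proposition \ref{prop LWP H dot s} in $\dot{H}^{\sct}\cap\dot{H}^s$.
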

We now give the local well-posedness for $(\ref{focusing intercritical NLFS})$ with initial data in $\dot{H}^{\sct} \cap \dot{H}^s$.
\begin{proposition}[Local well-posedness in $\dot{H}^{\sct} \cap \dot{H}^s$] \label{prop LWP H dot s}
	Let $d\geq 2$, $\frac{d}{2d-1} \leq s <1$ and $\frac{4s}{d} \leq \alpha<\frac{4s}{d-2s}$. Let 
	\begin{align}
	p=\frac{4s(\alpha+2)}{\alpha(d-2s)}, \quad q=\frac{d(\alpha+2)}{d+\alpha s}. \label{choice of pq}
	\end{align}
	Then for any $u_0 \in \dot{H}^{\sce} \cap \dot{H}^s$ radial, there exist $T>0$ and a unique solution $u$ to $(\ref{focusing intercritical NLFS})$ satisfying
	\[
	u \in C([0,T), \dot{H}^{\sce} \cap \dot{H}^s) \cap L^p_{\emph{loc}}([0,T), \dot{W}^{\sce, q} \cap \dot{W}^{s,q}).
	\]
	The maximal forward time of existence satisfies either $T=+\infty$ or $T<+\infty$ and $\lim_{t\uparrow T} \|u(t)\|_{\dot{H}^{\sce}}+\|u(t)\|_{\dot{H}^s} =\infty$. Moreover, the solution enjoys the conservation of energy, i.e. $E(u(t)) = E(u_0)$ for all $t\in [0,T)$. 
\end{proposition}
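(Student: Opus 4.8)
The plan is a contraction-mapping argument of Kato--Cazenave--Weissler type, carried out purely in homogeneous spaces by means of the radial Strichartz estimates $(\ref{radial strichartz homogeneous})$--$(\ref{radial strichartz inhomogeneous})$ (which hold with \emph{no} loss of derivatives --- this is the reason radiality is imposed), the homogeneous Sobolev embedding $\dot W^{s,q}\hookrightarrow L^{\rho}$ with $\frac1\rho=\frac1q-\frac sd$, and the fractional chain rule for $|u|^{\alpha}u$. Writing $(\ref{focusing intercritical NLFS})$ in Duhamel form
\[
\Phi(u)(t):=e^{-it(-\Delta)^s}u_0+i\int_0^t e^{-i(t-\tau)(-\Delta)^s}\big(|u|^{\alpha}u\big)(\tau)\,d\tau,
\]
I look for a fixed point in a ball
\[
B_{T,A}:=\Big\{u:\ \|u\|_{L^\infty([0,T],\dot H^{\sct}\cap\dot H^s)}+\||\nabla|^{\sct}u\|_{L^p([0,T],L^q)}+\||\nabla|^{s}u\|_{L^p([0,T],L^q)}\le A\Big\},
\]
with $(p,q)$ as in $(\ref{choice of pq})$, equipped with the weaker metric $d(u,v)=\|u-v\|_{L^p([0,T],L^q)}$. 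First I would record the arithmetic that makes $(\ref{choice of pq})$ work: $(p,q)$ satisfies the fractional admissible condition $(\ref{fractional admissible})$ (one checks $\frac{2s}{p}+\frac dq=\frac d2$, and $2<p<\infty$, $2\le q<d/s$, the last using $2s<d$), the exponent $\rho$ above obeys $\frac{1}{q'}=\frac{\alpha}{\rho}+\frac1q$, and the dual exponents of the nonlinearity land back onto $(p',q')$; moreover $\alpha\ge\frac{4s}{d}$ guarantees $\sct\ge0$, so that $\dot H^{\sct}\hookrightarrow L^{\alct}$ makes sense.

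The core is the nonlinear estimate. Applying $|\nabla|^{\gamma}$ with $\gamma\in\{\sct,s\}$ to the Duhamel term, using the inhomogeneous radial estimate $(\ref{radial strichartz inhomogeneous})$ with the pair $(p,q)$ on both sides, then the fractional chain rule with H\"older in space, and finally H\"older in time together with $\dot W^{s,q}\hookrightarrow L^{\rho}$ to absorb the $\alpha$ extra factors of $u$, one expects
\[
\big\||\nabla|^{\gamma}(|u|^{\alpha}u)\big\|_{L^{p'}([0,T],L^{q'})}\lesssim T^{\theta}\,\||\nabla|^{\gamma}u\|_{L^p([0,T],L^q)}\,\||\nabla|^{s}u\|^{\alpha}_{L^p([0,T],L^q)},\qquad \theta:=1-\tfrac{\alpha+2}{p},
\]
and, via the pointwise bound $\big||u|^{\alpha}u-|v|^{\alpha}v\big|\lesssim(|u|^{\alpha}+|v|^{\alpha})|u-v|$, a matching Lipschitz bound for $|u|^{\alpha}u-|v|^{\alpha}v$ in $L^{p'}_tL^{q'}_x$ controlled by $d(u,v)$. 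The exponent $\theta$ is \emph{strictly positive} exactly when $p>\alpha+2$, i.e. when $\alpha<\frac{4s}{d-2s}$ --- this is where the energy-subcritical hypothesis enters. Combining with the homogeneous estimate $(\ref{radial strichartz homogeneous})$ applied to $|\nabla|^{\gamma}u_0$ and the $(\infty,2)$ endpoint for the $L^\infty_t\dot H^{\gamma}$ component gives
\[
\|\Phi(u)\|_{B_T}\lesssim \|u_0\|_{\dot H^{\sct}\cap\dot H^s}+T^{\theta}\|u\|_{B_T}^{\alpha+1},\qquad d(\Phi(u),\Phi(v))\lesssim T^{\theta}A^{\alpha}\,d(u,v).
\]
Choosing $A\sim\|u_0\|_{\dot H^{\sct}\cap\dot H^s}$ and then $T$ small --- depending only on $\|u_0\|_{\dot H^{\sct}\cap\dot H^s}$, since the free term is bounded uniformly in $T$ --- makes $\Phi$ a contraction of $(B_{T,A},d)$, whose completeness in the weak metric is the standard Cazenave--Weissler fact. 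The fixed point $u=\Phi(u)$ then lies in $C([0,T],\dot H^{\sct}\cap\dot H^s)\cap L^p_{\mathrm{loc}}([0,T),\dot W^{\sct,q}\cap\dot W^{s,q})$ after passing to the maximal interval, and uniqueness in the full solution class follows by the usual continuity argument. Because the existence time depends only on the norm, the blowup alternative is standard: if $T<\infty$ and $\liminf_{t\uparrow T}\big(\|u(t)\|_{\dot H^{\sct}}+\|u(t)\|_{\dot H^s}\big)<\infty$, restarting the construction near $T$ would extend $u$ past $T$; hence $\lim_{t\uparrow T}\big(\|u(t)\|_{\dot H^{\sct}}+\|u(t)\|_{\dot H^s}\big)=\infty$.

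For the conservation of energy I would argue by approximation, since $\dot H^{\sct}\cap\dot H^s\not\subset L^2$ in general --- which is precisely why no mass conservation is asserted, $M(u)$ possibly being infinite. The functional $E(u)=\frac12\|u\|_{\dot H^s}^2-\frac1{\alpha+2}\|u\|_{L^{\alpha+2}}^{\alpha+2}$ is finite and continuous on $\dot H^{\sct}\cap\dot H^s$ by the Gagliardo--Nirenberg inequality $(\ref{sharp gagliardo-nirenberg intro})$. Approximating $u_0$ by radial $\mathcal S_0$ (hence $H^s$) data $u_{0,n}$, the corresponding $H^s$ solutions from Proposition $\ref{prop local well-posedness Hs}$ satisfy $E(u_n(t))=E(u_{0,n})$; by local uniqueness these agree with the $\dot H^{\sct}\cap\dot H^s$ solutions with data $u_{0,n}$, and the stability built into the fixed-point scheme yields $u_n\to u$ in $C([0,T'],\dot H^{\sct}\cap\dot H^s)$ for every $T'<T$. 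Passing to the limit, $E(u(t))=E(u_0)$ on $[0,T)$.

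The step I expect to be most delicate is the nonlinear estimate in the critical space $\dot H^{\sct}$: the Strichartz exponents must be tuned so that (i) every factor is controlled by the $B_{T,A}$-norm --- which forces the specific pair $(\ref{choice of pq})$ and the use of $\dot W^{s,q}\hookrightarrow L^{\rho}$ to carry the $\alpha$ ``extra'' copies of $u$ --- and (ii) a strictly positive power of $T$ survives in the estimate, which pins the whole argument to $\alpha<\frac{4s}{d-2s}$. A secondary technical point is invoking the fractional chain rule for $|u|^{\alpha}u$ for all $\alpha>0$ (legitimate here because only $\gamma<1$ derivatives are applied) and the completeness of $B_{T,A}$ in the $d$-metric; both are standard but need to be cited with care.
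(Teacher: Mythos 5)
Your proposal is correct and follows essentially the same route as the paper: Duhamel plus a contraction in a Strichartz ball with a weaker metric, using the radial (loss-free) Strichartz estimates, the homogeneous embedding $\dot W^{s,q}\hookrightarrow L^{\rho}$ with the pair $(\ref{choice of pq})$, the fractional chain rule, and H\"older in time producing the factor $T^{\theta}$ with $\theta=1-\frac{\alpha(d-2s)}{4s}>0$, followed by the standard blowup alternative and energy conservation by approximation. Your exponent $\theta=1-\frac{\alpha+2}{p}$ coincides with the paper's, and the remaining bookkeeping matches the paper's argument.
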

\begin{remark} \label{rem LWP H dot s}
	When $\sct=0$ or $\alpha=\frac{4s}{d}$, Proposition $\ref{prop LWP H dot s}$ is a consequence of Proposition $\ref{prop local well-posedness Hs}$ since $\dot{H}^0 = L^2$ and $L^2 \cap \dot{H}^s= H^s$.
\end{remark}
\noindent \textit{Proof of Proposition $\ref{prop LWP H dot s}$.} It is easy to check that $(p,q)$ satisfies the fractional admissible condition $(\ref{fractional admissible})$. We next choose $(m,n)$ so that
\[
\frac{1}{p'}=\frac{1}{p}+\frac{\alpha}{m}, \quad \frac{1}{q'}=\frac{1}{q} + \frac{\alpha}{n}.
\]
We see that 
\[
\theta:=\frac{\alpha}{m}-\frac{\alpha}{p}=1-\frac{(d-2s)\alpha}{4s}>0, \quad q \leq n = \frac{dq}{d-sq}.
\]
The later fact ensures the Sobolev embedding $\dot{W}^{s,q} \hookrightarrow L^n$. Consider
\begin{align*}
X:= \left\{ u \in C(I, \dot{H}^{\sct} \cap \dot{H}^s) \cap L^p(I, \dot{W}^{\sct,q} \cap \dot{W}^{s,q}) \ : \ \right. & \|u\|_{L^\infty(I, \dot{H}^{\sct} \cap \dot{H}^s)}  \\
& \left. + \|u\|_{L^p(I, \dot{W}^{\sct,q} \cap \dot{W}^{s,q})} \leq M \right\},
\end{align*}
equipped with the distance
\[
d(u,v):= \|u-v\|_{L^\infty(I, L^2)} + \|u-v\|_{L^p(I, L^q)},
\]
where $I=[0, \zeta]$ and $M, \zeta>0$ to be determined later. Thanks to Duhamel's formula, it suffices to show that the functional
\[
\Phi(u)(t):= e^{-it(-\Delta)^s} u_0 + i \int_0^t e^{-i(t-\tau)(-\Delta)^s} |u(\tau)|^\alpha u(\tau) d\tau
\]
is a contraction on $(X,d)$. Thanks to Strichartz estimates $(\ref{radial strichartz homogeneous})$ and $(\ref{radial strichartz inhomogeneous})$, 
\begin{align*}
\|\Phi(u)\|_{L^\infty(I, \dot{H}^{\sct} \cap \dot{H}^s)} + \|\Phi(u)\|_{L^p(I, \dot{W}^{\sct,q} \cap \dot{W}^{s,q})} &\lesssim \|u_0\|_{\dot{H}^{\sct} \cap \dot{H}^s} + \||u|^\alpha u\|_{L^{p'}(I, \dot{W}^{\sct,q'} \cap \dot{W}^{s,q'})}, \\
\|\Phi(u)-\Phi(v)\|_{L^\infty(I, L^2)} + \|\Phi(u)-\Phi(v)\|_{L^p(I, L^q)} &\lesssim \||u|^\alpha u- |v|^\alpha v\|_{L^{p'}(I, L^{q'})}.
\end{align*}
By the fractional derivatives (see e.g. Proposition 3.1 of Ref. \cite{ChristWeinstein}) and the choice of $(m,n)$, the H\"older inequality implies
\begin{align*}
\||u|^\alpha u\|_{L^{p'}(I, \dot{W}^{\sct,q'} \cap \dot{W}^{s,q'})} &\lesssim \|u\|^\alpha_{L^m(I,L^n)} \|u\|_{L^p(I, \dot{W}^{\sct,q} \cap \dot{W}^{s,q})} \\
&\lesssim |I|^\theta \|u\|^\alpha_{L^p(I, L^n)} \|u\|_{L^p(I, \dot{W}^{\sct,q} \cap \dot{W}^{s,q})} \\
&\lesssim |I|^\theta \|u\|^\alpha_{L^p(I, \dot{W}^{s,q})} \|u\|_{L^p(I, \dot{W}^{\sct,q} \cap \dot{W}^{s,q})}.
\end{align*}
Similarly,
\begin{align*}
\||u|^\alpha u - |v|^\alpha v\|_{L^{p'}(I, L^{q'})} &\lesssim \left( \|u\|^\alpha_{L^m(I, L^n)} + \|v\|^\alpha_{L^m(I, L^n)}\right) \|u-v\|_{L^p(I, L^q)} \\
&\lesssim |I|^\theta \left( \|u\|^\alpha_{L^p(I, \dot{W}^{s,q})} + \|v\|^\alpha_{L^p(I, \dot{W}^{s,q})} \right) \|u-v\|_{L^p(I, L^q)}.
\end{align*}
This shows that for all $u,v \in X$, there exists $C>0$ independent of $\zeta$ and $u_0 \in \dot{H}^{\sct} \cap \dot{H}^s$ such that
\begin{align}
\|\Phi(u)\|_{L^\infty(I, \dot{H}^{\sct} \cap \dot{H}^s)} + \|\Phi(u)\|_{L^p(I, \dot{W}^{\sct,q} \cap \dot{W}^{s,q})}  &\leq C\|u_0\|_{\dot{H}^{\sct} \cap \dot{H}^s} + C\zeta^\theta M^{\alpha+1},  \label{blowup rate proof}\\
d(\Phi(u), \Phi(v)) &\leq C\zeta^\theta M^\alpha d(u,v). \nonumber
\end{align}
If we set $M=2C \|u_0\|_{\dot{H}^{\sct} \cap \dot{H}^s}$ and choose $\zeta>0$ so that
\[
C \zeta^\theta M^\alpha \leq \frac{1}{2},
\]
then $\Phi$ is a strict contraction on $(X,d)$. This proves the existence of solution 
\[
u \in C(I, \dot{H}^{\sct} \cap \dot{H}^s) \cap L^p(I, \dot{W}^{\sct,q} \cap \dot{W}^{s,q}).
\]
Note that by radial Strichartz estimates, the solution belongs to $L^a(I, \dot{W}^{\sct,b} \cap \dot{W}^{s,b})$ for any fractional admissible pairs $(a,b)$. The blowup alternative is easy since the time of existence depends only on the $\dot{H}^{\sct} \cap \dot{H}^s$-norm of initial data. The conservation of energy follows from the standard approximation. The proof is complete.
\defendproof
\begin{corollary}[Blowup rate] \label{coro blowup rate}
	Let $d\geq 2$, $\frac{d}{2d-1} \leq s <1$, $\frac{4s}{d} \leq \alpha<\frac{4s}{d-2s}$ and $u_0 \in \dot{H}^{\sce} \cap \dot{H}^s$ be radial. Assume that the corresponding solution $u$ to $(\ref{focusing intercritical NLFS})$ given in Proposition $\ref{prop LWP H dot s}$ blows up at finite time $0<T<+\infty$. Then there exists $C>0$ such that
	\begin{align}
	\|u(t)\|_{\dot{H}^{\sce} \cap \dot{H}^s} > \frac{C}{(T-t)^{\frac{s-\sce}{2s}}}, \label{blowup rate}
	\end{align}
	for all $0<t<T$.
\end{corollary}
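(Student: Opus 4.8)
The plan is to read off a quantitative lower bound on the local existence time from the fixed-point argument in the proof of Proposition~\ref{prop LWP H dot s}, and then combine it with time translation and uniqueness. Recall that in that proof the solution is constructed on $I=[0,\zeta]$ as soon as $M=2C\|u_0\|_{\dot{H}^{\sct}\cap\dot{H}^s}$ and $C\zeta^\theta M^\alpha\le\frac12$, where $\theta=1-\frac{(d-2s)\alpha}{4s}>0$; the key point is that, by $(\ref{blowup rate proof})$, the constant $C$ there is independent of $\zeta$ and of $u_0$. Hence one may take
\[
\zeta = c_0\,\|u_0\|_{\dot{H}^{\sct}\cap\dot{H}^s}^{-\alpha/\theta}, \qquad c_0 = \Bigl(\tfrac{1}{2C(2C)^\alpha}\Bigr)^{1/\theta},
\]
so that the local existence time depends on the data only through $\|u_0\|_{\dot{H}^{\sct}\cap\dot{H}^s}$, decreasing like a negative power of this norm.

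Next, fix $t\in[0,T)$. Since the flow preserves radial symmetry and $u(t)\in\dot{H}^{\sct}\cap\dot{H}^s$ is radial, Proposition~\ref{prop LWP H dot s} applied with initial datum $u(t)$ produces a solution on $[t,\,t+c_0\|u(t)\|_{\dot{H}^{\sct}\cap\dot{H}^s}^{-\alpha/\theta})$, which by the uniqueness part of that proposition coincides with $u$ on the overlap. Because $T$ is the maximal forward time of existence, this forces
\[
t + c_0\,\|u(t)\|_{\dot{H}^{\sct}\cap\dot{H}^s}^{-\alpha/\theta} \le T,
\]
that is, $\|u(t)\|_{\dot{H}^{\sct}\cap\dot{H}^s}\ge c_0^{\theta/\alpha}(T-t)^{-\theta/\alpha}$. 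It remains only to identify the exponent: using $\sct=\frac{d}{2}-\frac{2s}{\alpha}$ one computes
\[
\frac{\theta}{\alpha} = \frac{1}{\alpha}-\frac{d-2s}{4s} = \frac{1}{\alpha}-\frac{d}{4s}+\frac12 = \frac{1}{2s}\Bigl(s-\frac{d}{2}+\frac{2s}{\alpha}\Bigr) = \frac{s-\sct}{2s},
\]
which yields $(\ref{blowup rate})$ with $C=c_0^{\theta/\alpha}$.

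There is no serious obstacle here. The only points requiring a little care are (i) extracting the explicit power-law dependence of $\zeta$ on $\|u_0\|_{\dot{H}^{\sct}\cap\dot{H}^s}$ from $(\ref{blowup rate proof})$, which hinges on the $\zeta$- and $u_0$-independence of the constant asserted there, and (ii) using uniqueness to glue the time-translated solution to $u$, so that violating the lower bound would genuinely contradict the blowup alternative $\lim_{t\uparrow T}\bigl(\|u(t)\|_{\dot{H}^{\sct}}+\|u(t)\|_{\dot{H}^s}\bigr)=\infty$. Finally, the hypotheses $\frac{d}{2d-1}\le s<1$ and $\frac{4s}{d}\le\alpha<\frac{4s}{d-2s}$ guarantee $\theta>0$, hence $\alpha/\theta>0$ and $s-\sct>0$, so the bound is genuinely a positive power of $(T-t)^{-1}$.
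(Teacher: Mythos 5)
Your proof is correct and follows essentially the same route as the paper: both arguments extract from the contraction estimate $(\ref{blowup rate proof})$ a lower bound on the local existence time of the form $c_0\|u(t)\|_{\dot{H}^{\sct}\cap\dot{H}^s}^{-\alpha/\theta}$, invoke maximality of $T$ to deduce $(T-t)^\theta\|u(t)\|^\alpha_{\dot{H}^{\sct}\cap\dot{H}^s}\gtrsim 1$, and then compute $\frac{\theta}{\alpha}=\frac{s-\sct}{2s}$. The only cosmetic difference is that the paper phrases the contradiction as ``$C\|u(t)\|+C(T-t)^\theta M^{\alpha+1}>M$ for all $M>0$'' before specializing $M=2C\|u(t)\|_{\dot{H}^{\sct}\cap\dot{H}^s}$, whereas you solve for the guaranteed local time explicitly first; the content is identical.
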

\begin{proof}
	Let $0<t<T$. If we consider $(\ref{focusing intercritical NLFS})$ with initial data $u(t)$, then it follows from $(\ref{blowup rate proof})$ and the fixed point argument that if for some $M>0$,
	\[
	C\|u(t)\|_{\dot{H}^{\sct} \cap \dot{H}^s} + C(\zeta-t)^\theta M^{\alpha+1} \leq M,
	\]
	then $\zeta<T$. Thus, 
	\[
	C\|u(t)\|_{\dot{H}^{\sct} \cap \dot{H}^s} + C(T-t)^\theta M^{\alpha+1} > M,
	\]
	for all $M>0$. Choosing $M= 2C\|u(t)\|_{\dot{H}^{\sct} \cap \dot{H}^s}$, we see that
	\[
	(T-t)^\theta \|u(t)\|^\alpha_{\dot{H}^{\sct} \cap \dot{H}^s} >C.
	\]
	This implies
	\[
	\|u(t)\|_{\dot{H}^{\sct} \cap \dot{H}^s}> \frac{C}{(T-t)^{\frac{\theta}{\alpha}}},
	\]
	which is exactly $(\ref{blowup rate})$ since $\frac{\theta}{\alpha}= \frac{4s-\alpha(d-2s)}{4\alpha s} = \frac{s-\sct}{2s}$. The proof is complete.
\end{proof}
\subsection{Blowup criteria}
In this subsection, we prove blowup criteria for $\dot{H}^{\sct} \cap \dot{H}^s$ solutions to the mass-supercritical and energy-subcritical $(\ref{focusing intercritical NLFS})$. For initial data in $H^s$, Boulenger-Himmelsbach-Lenzmann proved blowup criteria for the equation (see Theorem $\ref{theorem blowup criteria Hs}$ for more details). The main difficulty in our consideration is that the conservation of mass is no longer available. We overcome this difficulty by assuming that the solution satisfies the uniform bound $(\ref{bounded intro})$. More precisely, we have the following:
\begin{proposition}[Blowup criteria] \label{prop blowup criteria}
	Let $d\geq 2$, $\frac{d}{2d-1} \leq s <1$, $\frac{4s}{d}<\alpha<\frac{4s}{d-2s}$ and $\alpha<4s$. Let $u_0 \in \dot{H}^{\sce} \cap \dot{H}^s$ be radial satisfying $E(u_0)<0$. Assume that the corresponding solution to $(\ref{focusing intercritical NLFS})$ defined on a maximal forward time interval $[0,T)$ satisfies $(\ref{bounded intro})$. Then the solution $u$ blows up in finite time, i.e. $T<+\infty$. 
\end{proposition}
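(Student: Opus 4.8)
The plan is to adapt the localized virial argument of Boulenger--Himmelsbach--Lenzmann~\cite{BoulengerHimmelsbachLenzmann} to $\dot{H}^{\sct}\cap\dot{H}^s$ solutions, using the uniform bound $(\ref{bounded intro})$ in place of the mass conservation that is no longer available. Fix a radial cutoff $\varphi_R(x)=R^2\varphi(x/R)$ with $\varphi(r)=r^2/2$ for $r\le 1$, $\varphi$ constant for $r\ge 2$ and $\varphi''\le 1$, and introduce the virial quantity
\[
M_{\varphi_R}[u(t)]:=2\operatorname{Im}\int_{\R^d}\overline{u(t,x)}\,\nabla\varphi_R(x)\cdot\nabla u(t,x)\,dx.
\]
By the local well-posedness of Proposition~$\ref{prop LWP H dot s}$ together with a persistence-of-regularity and approximation argument (regularizing the initial data), it suffices to derive and exploit the virial identity for sufficiently smooth solutions and then pass to the limit; we therefore argue formally below.

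First I would compute $\frac{d}{dt}M_{\varphi_R}[u(t)]$. Handling the nonlocality of $(-\Delta)^s$ through the Balakrishnan representation $(-\Delta)^s=c_s\int_0^\infty m^{s-1}(-\Delta)\bigl((-\Delta)+m\bigr)^{-1}\,dm$ exactly as in \cite{BoulengerHimmelsbachLenzmann}, one arrives at an identity of the schematic form
\[
\frac{d}{dt}M_{\varphi_R}[u(t)]=4s\,\|u(t)\|_{\dot{H}^s}^2-\frac{2d\alpha}{\alpha+2}\|u(t)\|_{L^{\alpha+2}}^{\alpha+2}+\mathcal{R}_R(u(t)),
\]
where $\mathcal{R}_R$ gathers the error terms coming from the region $|x|\gtrsim R$ (where $\varphi_R$ departs from $|x|^2/2$) and from the commutators of $(-\Delta)^s$ with $\varphi_R$. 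Removing $\|u(t)\|_{L^{\alpha+2}}^{\alpha+2}$ by the conservation of energy, and using $\alpha>\frac{4s}{d}$, this becomes
\[
\frac{d}{dt}M_{\varphi_R}[u(t)]=2d\alpha\,E(u_0)-(d\alpha-4s)\|u(t)\|_{\dot{H}^s}^2+\mathcal{R}_R(u(t)),\qquad d\alpha-4s>0.
\]

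The main step — and the main obstacle — is to control $\mathcal{R}_R(u(t))$ without mass conservation. The linear pieces are $O(R^{-2s})$ times powers of $\|u(t)\|_{\dot{H}^{\sct}}$, hence bounded via $(\ref{bounded intro})$. The genuinely nonlinear piece is controlled by $\int_{|x|\gtrsim R}|u(t,x)|^{\alpha+2}\,dx$; here, rather than the Strauss inequality involving $\|u\|_{L^2}$ used in \cite{BoulengerHimmelsbachLenzmann}, I would use the radial Sobolev embedding (available for $d\ge 2$, $s\ge\frac{d}{2d-1}$) together with interpolation between $\dot{H}^{\sct}$ and $\dot{H}^s$, of the form $\|u\|_{L^\infty(|x|\ge R)}\lesssim R^{-\frac{d-1}{2}+\sigma}\|u\|_{\dot{H}^{\sct}}^{1-\theta}\|u\|_{\dot{H}^s}^{\theta}$ for suitable $\sigma$ and $\theta\in(0,1)$, to obtain a bound of the form $CR^{-\delta}\|u(t)\|_{\dot{H}^{\sct}}^{\beta}\|u(t)\|_{\dot{H}^s}^{\alpha/(2s)}$ with $\delta>0$. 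Since $\alpha<4s$, the exponent $\alpha/(2s)$ is strictly less than $2$, so by Young's inequality (and $(\ref{bounded intro})$ again) this term is absorbed into a small fraction of $(d\alpha-4s)\|u(t)\|_{\dot{H}^s}^2$ at the cost of an additive constant $\varepsilon_R$ with $\varepsilon_R\to 0$ as $R\to\infty$. Since $E(u_0)<0$, choosing $R$ large enough therefore yields, for all $t\in[0,T)$ and some $c_1>0$,
\[
\frac{d}{dt}M_{\varphi_R}[u(t)]\le d\alpha\,E(u_0)-c_1\|u(t)\|_{\dot{H}^s}^2\le d\alpha\,E(u_0)<0.
\]

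Finally I would reach a contradiction with $T=+\infty$. Integrating the last inequality gives $M_{\varphi_R}[u(t)]\le M_{\varphi_R}[u(0)]+d\alpha\,E(u_0)\,t\to-\infty$. On the other hand, since $\nabla\varphi_R$ and $\Delta\varphi_R$ are supported in $|x|\le 2R$ and $\sct>0$, the Hölder inequality gives $\|u(t)\|_{L^2(|x|\le 2R)}\lesssim R^{\sct}\|u(t)\|_{\dot{H}^{\sct}}$, and treating $M_{\varphi_R}$ as a bounded quadratic form in a first-order operator with coefficients of size $O(R)$ supported in $|x|\le 2R$, together with interpolation between $\dot{H}^{\sct}$ and $\dot{H}^s$, yields $|M_{\varphi_R}[u(t)]|\le C_R\bigl(1+\|u(t)\|_{\dot{H}^s}^{2\nu}\bigr)$ for some $\nu\in[0,1)$ (with $\nu=0$ when $\sct\ge\frac12$). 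If $\nu=0$ this contradicts $M_{\varphi_R}[u(t)]\to-\infty$ directly. If $\nu\in(0,1)$, once $M_{\varphi_R}[u(t_0)]<0$ the function $h(t):=-M_{\varphi_R}[u(t)]>0$ satisfies $h'(t)\ge c_1\|u(t)\|_{\dot{H}^s}^2\ge c\,h(t)^{1/\nu}$ for $h$ large, and since $1/\nu>1$ this differential inequality forces $h(t)$, hence $\|u(t)\|_{\dot{H}^s}$, to become infinite at a finite time — contradicting $T=+\infty$. Therefore $T<+\infty$.
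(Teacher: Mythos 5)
Your proposal is correct and follows essentially the same route as the paper: the localized virial identity of Boulenger--Himmelsbach--Lenzmann, with the uniform bound $(\ref{bounded intro})$ and the embedding $\dot{H}^{\sct}\hookrightarrow L^{\alct}$ replacing mass conservation in the error estimates, the radial Sobolev embedding plus interpolation and Young's inequality (using $\alpha<4s$) for the nonlinear tail, and a superlinear differential/integral inequality for $M_{\varphi_R}$ to force finite-time blowup. The only detail to be careful about is your estimate of $\int_{|x|\gtrsim R}|u|^{\alpha+2}\,dx$: pairing the exterior $L^\infty$ bound with an $L^2$ norm over all of $\{|x|\ge R\}$ is not allowed (that norm is not controlled without mass conservation), and the paper handles this by decomposing the exterior into dyadic annuli $\{2^jR<|x|\le 2^{j+1}R\}$, applying the local bound $\|u\|_{L^2(|x|\lesssim A)}\lesssim A^{\sct}\|u\|_{\dot{H}^{\sct}}$ on each, and summing the resulting geometric series $\sum_j (2^jR)^{-2(s-\sct)}$.
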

\begin{remark}\label{rem blowup criteria}
	The condition $\alpha<4s$ comes from the radial Sobolev embedding (a analogous condition appears in Ref. \cite{BoulengerHimmelsbachLenzmann} (see again Theorem $\ref{theorem blowup criteria Hs}$)).
\end{remark}
\noindent \textit{Proof of Proposition $\ref{prop blowup criteria}$.} Let $\chi: [0,\infty) \rightarrow [0,\infty)$ be a smooth function such that 
\[
\chi(r)= \left\{
\begin{array}{ll}
r^2 & \text{if } r\leq 1, \\
0 & \text{if } r\geq 2,
\end{array}
\right.
\quad \text{and} \quad \chi''(r) \leq 2 \text{ for } r\geq 0.
\]
For a given $R>0$, we define the radial function $\chi_R: \R^d \rightarrow \R$ by
\[
\varphi_R(x)= \varphi_R(r):= R^2 \chi(r/R), \quad |x|=r.
\]
It is easy to see that
\[
2-\varphi''_R(r) \geq 0, \quad 2-\frac{\varphi'_R(r)}{r}  \geq 0, \quad 2d-\Delta \varphi_R(x) \geq 0, \quad \forall r \geq 0, \forall x \in \R^d.
\]
Moreover, 
\[
\|\nabla^j \varphi_R\|_{L^\infty} \lesssim R^{2-j}, \quad j=0,\cdots, 4,
\]
and
\[
\text{supp}(\nabla^j \varphi_R) \subset \left\{
\begin{array}{cl}
\{|x| \leq 2R \} &\text{for } j=1,2, \\
\{R \leq |x| \leq 2R\} &\text{for } j=3,4.
\end{array}
\right.
\]
Now let $u \in \dot{H}^{\sct} \cap \dot{H}^s$ be a solution to $(\ref{focusing intercritical NLFS})$. We define the local virial action by
\[
M_{\varphi_R}(t) := 2 \int \nabla \varphi_R(x) \cdot \im{(\overline{u}(t,x)\nabla u(t,x))} dx.
\]
The virial action $M_{\varphi_R}(t)$ is well-defined. Indeed, we first learn from the H\"older inequality and the Sobolev embedding $\dot{H}^{\sct} \hookrightarrow L^{\alct}$ that
\begin{align}
\|u\|_{L^2(|x| \lesssim R)} \lesssim R^{\sct} \|u\|_{L^{\alct}(|x| \lesssim R)} \lesssim R^{\sct} \|u\|_{\dot{H}^{\sct}(|x| \lesssim R)}. \label{estimate L2 norm}
\end{align}
Using the fact $\text{supp}(\nabla \varphi_R) \subset \{|x|\lesssim R\}$, $(\ref{estimate L2 norm})$ and the estimate given in Lemma A.1 of Ref. \cite{BoulengerHimmelsbachLenzmann}, we have 
\begin{align}
|M_{\varphi_R}(t)| &\leq C(\chi,R) \left( \||\nabla|^{\frac{1}{2}} u(t)\|_{L^2(|x| \lesssim R)}^2 + \|u(t)\|_{L^2(|x|\lesssim R)} \||\nabla|^{\frac{1}{2}} u(t)\|_{L^2(|x| \lesssim R)} \right) \nonumber \\
&\leq C(\chi,R)  \left( \|u(t)\|^{2-\frac{1}{s}}_{L^2(|x| \lesssim R)} \|u(t)\|_{\dot{H}^s(|x| \lesssim R)}^{\frac{1}{s}} + \|u(t)\|^{2-\frac{1}{2s}}_{L^2(|x|\lesssim R)} \|u(t)\|^{\frac{1}{2s}}_{\dot{H}^s(|x| \lesssim R)} \right) \label{well defined local virial action} \\
& \leq C(\chi,R) \left( \|u(t)\|^{2-\frac{1}{s}}_{\dot{H}^{\sct}(|x| \lesssim R)} \|u(t)\|_{\dot{H}^s(|x| \lesssim R)}^{\frac{1}{s}} + \|u(t)\|^{2-\frac{1}{2s}}_{\dot{H}^{\sct}(|x|\lesssim R)} \|u(t)\|^{\frac{1}{2s}}_{\dot{H}^s(|x| \lesssim R)} \right). \nonumber
\end{align}
This shows that $M_{\varphi_R}(t)$ is well-defined for all $t\in [0,T)$. Note that in the case $\chi(r)=r^2$ or $\varphi_R(x)=|x|^2$, we have formally the virial identity (see Lemma 2.1 of Ref. \cite{BoulengerHimmelsbachLenzmann}):
\begin{align}
M'_{|x|^2}(t) = 8s \|u(t)\|^2_{\dot{H}^s} - \frac{4d\alpha}{\alpha+2} \|u(t)\|^{\alpha+2}_{L^{\alpha+2}} = 4d\alpha E(u(t)) - 2(d\alpha-4s) \|u(t)\|^2_{\dot{H}^s}. \label{virial identity}
\end{align}
We also have from Lemma 2.1 of \cite{BoulengerHimmelsbachLenzmann} that for any $t\in [0,T)$,
\begin{align*}
M'_{\varphi_R}(t) &= -\int_0^\infty m^s \int \Delta^2 \varphi_R |u_m(t)|^2 dx dm + 4\sum_{j,k=1}^d \int_0^\infty m^s \int \partial^2_{jk} \varphi_R \partial_j \overline{u}_m(t) \partial_k u_m(t) dx dm \\
&\mathrel{\phantom{= -\int_0^\infty m^s \int \Delta^2 \varphi_R |u_m(t)|^2 dx dm}}- \frac{2\alpha}{\alpha+2} \int \Delta \varphi_R |u(t)|^{\alpha+2} dx,
\end{align*}
where
\begin{align*}
u_m(t):= c_s \frac{1}{-\Delta+m} u(t) = c_s \mathcal{F}^{-1} \left(\frac{\hat{u}(t)}{|\xi|^2 +m}\right), \quad m>0,
\end{align*}
with
\[
c_s:= \sqrt{\frac{\sin \pi s}{\pi}}.
\]
Since $\varphi_R(x)=|x|^2$ for $|x| \leq R$, we use $(\ref{virial identity})$ to write
\begin{align*}
M'_{\varphi_R}(t)& =8s \|u(t)\|^2_{\dot{H}^s} -\frac{4d\alpha}{\alpha+2} \|u(t)\|^{\alpha+2}_{L^{\alpha+2}} - 8s\|u(t)\|^2_{\dot{H}^s(|x|>R)} + \frac{4d\alpha}{\alpha+2} \|u(t)\|^{\alpha+2}_{L^{\alpha+2}(|x|>R)} \\
&\mathrel{\phantom{=8s \|u(t)\|^2_{\dot{H}^s}}} -\int_0^\infty m^s \int_{|x|>R} \Delta^2 \varphi_R |u_m(t)|^2 dx dm  \\
&\mathrel{\phantom{=8s \|u(t)\|^2_{\dot{H}^s}}}+ 4\sum_{j,k=1}^\infty \int_0^\infty m^s \int_{|x|>R} \partial^2_{jk} \varphi_R \partial_j \overline{u}_m(t) \partial_k u_m(t) dx dm \\
&\mathrel{\phantom{=8s \|u(t)\|^2_{\dot{H}^s} -\frac{4d\alpha}{\alpha+2} \|u(t)\|^{\alpha+2}_{L^{\alpha+2}}}}- \frac{2\alpha}{\alpha+2} \int_{|x|>R} \Delta \varphi_R |u(t)|^{\alpha+2} dx\\
&= 4d\alpha E(u(t)) - 2(d\alpha-4s) \|u(t)\|^2_{\dot{H}^s} \\
&\mathrel{\phantom{=}} + 4\sum_{j,k=1}^\infty \int_0^\infty m^s \int_{|x|>R} \partial^2_{jk} \varphi_R \partial_j \overline{u}_m(t) \partial_k u_m(t) dx dm - 8s \|u(t)\|^2_{\dot{H}^s(|x|>R)} \\
&\mathrel{\phantom{=}} - \int_0^\infty m^s \int_{|x|>R} \Delta^2 \varphi_R |u_m(t)|^2 dx dm + \frac{2\alpha}{\alpha+2} \int_{|x|>R} (2d-\Delta \varphi_R) |u(t)|^{\alpha+2} dx.
\end{align*}
Using
\[
\partial^2_{jk}= \left(\delta_{jk} - \frac{x_j x_k}{r^2} \right) \frac{\partial_r}{r} + \frac{x_j x_k}{r^2}\partial^2_r,
\]
we write
\[
4\sum_{j,k=1}^\infty \int_0^\infty m^s \int_{|x|>R} \partial^2_{jk} \varphi_R \partial_j \overline{u}_m(t) \partial_k u_m(t) dx dm = 4\int_0^\infty m^s \int_{|x|>R} \varphi''_R |\nabla u_m(t)|^2 dx dm.
\]
Note that (see (2.12) in Ref. \cite{BoulengerHimmelsbachLenzmann})
\[
\int_0^\infty m^s \int |\nabla f_m|^2 dx dm = \int \left(\frac{\sin \pi s}{\pi} \int_0^\infty \frac{m^s}{(|\xi|^2 +m)^2} dm \right) |\xi|^2 |\hat{f}(\xi)|^2 d\xi = s \|f\|^2_{\dot{H}^s}.
\]
We thus get
\begin{align*}
4\sum_{j,k=1}^\infty \int_0^\infty & m^s \int_{|x|>R} \partial^2_{jk} \varphi_R \partial_j \overline{u}_m(t) \partial_k u_m(t) dx dm \\
& = 8s\|u(t)\|^2_{\dot{H}^s(|x|>R)} -4 \int_0^\infty m^s \int_{|x|>R} (2-\varphi''_R) |\nabla u_m(t)|^2 dx dm \\
&\leq 8s \|u(t)\|^2_{\dot{H}^s(|x|>R)}.
\end{align*}
Thanks to Lemma A.2 of Ref. \cite{BoulengerHimmelsbachLenzmann}, the definition of $\varphi_R$ and the uniform bound $(\ref{bounded intro})$, we estimate
\begin{align*}
\Big| \int_0^\infty m^s \int_{|x|>R} \Delta^2 \varphi_R |u_m(t)|^2 dxdm \Big| &\lesssim \|\Delta^2 \varphi_R\|^s_{L^\infty} \|\Delta \varphi_R\|^{1-s}_{L^\infty} \|u\|^2_{L^2(|x|\lesssim R)} \\
& \lesssim R^{-2s} R^{2\sct} \|u(t)\|^2_{\dot{H}^{\sct}(|x|\lesssim R)} \lesssim R^{-2(s-\sct)}. 
\end{align*}
We thus obtain
\begin{align*}
M'_{\varphi_R}(t) &\leq 4d\alpha E(u(t)) - 2(d\alpha-4s)\|u(t)\|^2_{\dot{H}^s} + C R^{-2(s-\sct)} \\
&\mathrel{\phantom{\leq 4d\alpha E(u(t))}}+ \frac{2\alpha}{\alpha+2} \int_{|x|>R} (2d-\Delta \varphi_R) |u(t)|^{\alpha+2} dx.
\end{align*}
Since $\|2d-\Delta\varphi_R\|_{L^\infty} \lesssim 1$, it remains to bound $\|u(t)\|^{\alpha+2}_{L^{\alpha+2}(|x|>R)}$. To do this, we make use of the argument of Ref. \cite{MerleRaphael} (see also Ref. \cite{Dinh-blowupfourth}). Consider for $A>0$ the annulus $\mathcal{C} = \{A<|x| \leq 2A\}$, we claim that for any $\eps>0$,
\begin{align}
\|u(t)\|^{\alpha+2}_{L^{\alpha+2}(|x|>R)} \leq \eps \|u(t)\|_{\dot{H}^s}^2 + C(\eps) A^{-2(s-\sct)}. \label{estimate annulus}
\end{align}
To show $(\ref{estimate annulus})$, we recall the radial Sobolev embedding (see e.g. Ref. \cite{ChoOzawa}): 
\[
\sup_{x \ne 0} |x|^{\frac{d}{2}-\beta} |f(x)| \leq C(d, \beta) \|f\|_{\dot{H}^\beta},
\]
for all radial functions $f \in \dot{H}^\beta(\R^d)$ with $\frac{1}{2} <\beta<\frac{d}{2}$. Thanks to radial Sobolev embedding and $(\ref{estimate L2 norm})$, we have
\begin{align}
\|u(t)\|^{\alpha+2}_{L^{\alpha+2}(\mathcal{C})} &\lesssim \left( \sup_{\mathcal{C}} |u(t,x)|\right)^\alpha \|u(t)\|^2_{L^2(\mathcal{C})} & \nonumber \\
& \lesssim A^{-\left(\frac{d}{2}-\beta \right)\alpha} \|u(t)\|^\alpha_{\dot{H}^\beta(\mathcal{C})} \|u(t)\|^2_{L^2(\mathcal{C})}  &\left(\frac{1}{2}<\beta <\frac{d}{2} \right) \nonumber \\
&\lesssim A^{-\left(\frac{d}{2}-\beta \right)\alpha} \left( \|u(t)\|^{\frac{\beta}{s}}_{\dot{H}^s(\mathcal{C})} \|u(t)\|^{1-\frac{\beta}{s}}_{L^2(\mathcal{C})}\right)^\alpha \|u(t)\|^2_{L^2(\mathcal{C})} & \left(\frac{1}{2}<\beta<s<\frac{d}{2}\right) \nonumber \\
& \lesssim A^{-\left(\frac{d}{2}-\beta \right)\alpha} \|u(t)\|^{\frac{\alpha\beta}{s}}_{\dot{H}^s(\mathcal{C})} \|u(t)\|^{\left(1-\frac{\beta}{s}\right) \alpha +2}_{L^2(\mathcal{C})} & \nonumber \\
&\lesssim A^{-\vartheta} \|u(t)\|_{\dot{H}^s(\mathcal{C})}^{\frac{\alpha \beta}{s}}, & \label{estimate claim}
\end{align}
where
\[
\vartheta:= \left(\frac{d}{2}-\beta \right) \alpha - \left( \left(1-\frac{\beta}{s}\right) \alpha+2\right) \sct. 
\]
It is easy to check that 
\[
\vartheta= 2(s-\sct)\left( 1-\frac{\alpha\beta}{2s} \right).
\]
By our assumption $\alpha<4s$, we can choose $\frac{1}{2} <\beta<s$ so that $\vartheta>0$. We next apply the Young inequality to have for any $\eps>0$,
\[
A^{-\vartheta} \|u(t)\|^{\frac{\alpha \beta}{s}}_{\dot{H}^s(\mathcal{C})} \lesssim \eps \|u(t)\|^2_{\dot{H}^s(\mathcal{C})} + C(\eps) A^{-\frac{2s \vartheta}{2s-\alpha \beta}} = \eps \|u(t)\|^2_{\dot{H}^s(\mathcal{C})} + C(\eps) A^{-2(s-\sct)}.
\]
This combined with $(\ref{estimate claim})$ prove $(\ref{estimate annulus})$. We now write
\[
\int_{|x|>R} |u(t)|^{\alpha+2} dx = \sum_{j=0}^\infty \int_{2^j R<|x| \leq 2^{j+1}R} |u(t)|^{\alpha+2} dx,
\]
and apply $(\ref{estimate annulus})$ with $A=2^j R$ to get
\begin{align*}
\int_{|x|>R} |u(t)|^{\alpha+2} dx &\leq \eps \sum_{j=0}^\infty \|u(t)\|^2_{\dot{H}^s(2^j R < |x| \leq 2^{j+1}R)} + C(\eps) \sum_{j=0}^\infty (2^j R)^{-2(s-\sct)} \\
&\leq \eps \|u(t)\|^2_{\dot{H}^s(|x|>R)} + C(\eps) R^{-2(s-\sct)}.
\end{align*}
This shows that for any $\eps>0$,
\[
\|u(t)\|^{\alpha+2}_{L^{\alpha+2}(|x|>R)} \leq \eps \|u(t)\|^2_{\dot{H}^s(|x|>R)} + C(\eps) R^{-2(s-\sct)},
\]
and hence
\[
M'_{\varphi_R}(t) \leq 4d\alpha E(u(t)) - 2(d\alpha-4s) \|u(t)\|^2_{\dot{H}^s} + O\left(R^{-2(s-\sct)} + \eps \|u(t)\|^2_{\dot{H}^s} + C(\eps) R^{-2(s-\sct)} \right).
\]
By the conservation of energy with $E(u_0)<0$ and the fact $d\alpha>4s$, we take $\eps>0$ small enough and $R>0$ large enough to obtain
\begin{align}
M'_{\varphi_R}(t) \leq 2d\alpha E(u_0) -\delta \|u(t)\|^2_{\dot{H}^s}, \label{estimate local virial action}
\end{align}
where $\delta:= d\alpha-4s>0$. We now follow the argument of Ref. \cite{BoulengerHimmelsbachLenzmann}. Since $E(u_0)<0$, we learn from $(\ref{estimate local virial action})$ that $M'_{\varphi_R}(t) \leq -c$ for $c>0$. From this, we conclude that $M_{\varphi_R}(t)<0$ for all $t>t_1$ for some sufficiently large time $t_1 \gg 1$. Taking integration over $[t_1, t]$, we have
\begin{align}
M_{\varphi_R}(t) \leq -\delta \int_{t_1}^t \|u(\tau)\|^2_{\dot{H}^s} d\tau \leq 0, \quad \forall t \geq t_1. \label{local virial action bound}
\end{align}
We have from $(\ref{well defined local virial action})$ and the assumption $(\ref{bounded intro})$ that
\begin{align}
|M_{\varphi_R}(t)| \leq C(\chi,R) \left( \|u(t)\|^{\frac{1}{s}}_{\dot{H}^s} + \|u(t)\|^{\frac{1}{2s}}_{\dot{H}^s}\right). \label{bound local virial action}
\end{align}
We also have
\begin{align}
\|u(t)\|_{\dot{H}^s} \gtrsim 1, \quad \forall t\geq 0.  \label{lower bound}
\end{align}
Indeed, suppose it is not true. Then there exists a sequence $(t_n)_{n} \subset [0,+\infty)$ such that $\|u(t_n)\|_{\dot{H}^s} \rightarrow 0$ as $n \rightarrow \infty$. Thanks to the Gagliardo-Nirenberg inequality $(\ref{sharp gagliardo-nirenberg intro})$ and the assumption $(\ref{bounded intro})$, we see that $\|u(t_n)\|_{L^{\alpha+2}} \rightarrow 0$. We thus get $E(u(t_n)) \rightarrow 0$, which is a contradiction to $E(u(t)) = E(u_0)<0$. This shows $(\ref{lower bound})$. Combining $(\ref{bound local virial action})$ and $(\ref{lower bound})$, we obtain
\begin{align}
|M_{\varphi_R}(t)| \leq C(\chi,R) \|u(t)\|^{\frac{1}{s}}_{\dot{H}^s}. \label{bound local virial action 1}
\end{align}
Therefore, $(\ref{local virial action bound})$ and $(\ref{bound local virial action 1})$ yield
\[
M_{\varphi_R}(t) \leq C(\chi, R) \int_{t_1}^t |M_{\varphi_R}(\tau)|^{2s} d\tau, \quad \forall t\geq t_1. 
\]
By nonlinear integral inequality, we get
\[
M_{\varphi_R}(t) \lesssim C(\chi, R) |t-t_*|^{1-2s},
\]
for $s>1/2$ with some $t_*<+\infty$. Therefore, $M_{\varphi_R}(t) \rightarrow -\infty$ as $t\uparrow t_*$. Hence the solution cannot exist for all times $t\geq 0$. The proof is complete. 
\defendproof
\subsection{Profile decomposition}
In this subsection, we recall the profile decomposition for bounded sequences in $\dot{H}^{\sct} \cap \dot{H}^s$. 
\begin{theorem}[Profile decomposition] \label{theorem profile decomposition}
	Let $d\geq 1$, $0<s<1$ and $\frac{4s}{d}<\alpha<2^\star$, where
	\begin{align}
	2^\star:= \left\{
	\begin{array}{cl}
	\frac{4s}{d-2s} &\text{if } d>2s, \\
	\infty &\text{if } d \leq 2s.
	\end{array}
	\right. \label{define 2 star}
	\end{align}
	Let $(v_n)_{n\geq 1}$ be a bounded sequence in $\dot{H}^{\sce} \cap \dot{H}^s$. Then there exist a subsequence still denoted $(v_n)_{n\geq 1}$, a family $(x_n^j)_{j\geq 1}$ of sequences in $\R^d$ and a sequence  $(V^j)_{j\geq 1}$ of functions in $\dot{H}^{\sce} \cap \dot{H}^s$ such that
	\begin{itemize}
		\item for every $k\ne j$,
		\begin{align}
		|x_n^k - x_n^j| \rightarrow \infty, \quad \text{as } n \rightarrow \infty, \label{pairwise orthogonality}
		\end{align}
		\item for every $l\geq 1$ and every $x \in \R^d$,
		\[
		v_n(x) = \sum_{j=1}^l V^j(x-x_n^j) + v_n^l(x),
		\]
		with
		\begin{align}
		\limsup_{n\rightarrow \infty} \|v^l_n\|_{L^q} \rightarrow 0, \quad \text{as } l \rightarrow \infty, \label{profile error}
		\end{align}
		for every $q \in (\alce, 2+2^\star)$, where $\alce$ is given in $(\ref{critical lebesgue exponent})$. Moreover, 
		\begin{align}
		\|v_n\|^2_{\dot{H}^{\sce}} &= \sum_{j=1}^l \|V^j\|^2_{\dot{H}^{\sce}} + \|v^l_n\|^2_{\dot{H}^{\sce}} + o_n(1), \label{profile identity 1} \\
		\|v_n\|^2_{\dot{H}^s} &= \sum_{j=1}^l \|V^j\|^2_{\dot{H}^s} + \|v^l_n\|^2_{\dot{H}^s} + o_n(1), \label{profile identity 2}
		\end{align}
		as $n\rightarrow \infty$.
	\end{itemize}
\end{theorem}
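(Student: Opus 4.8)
The plan is to follow the concentration-compactness scheme of Hmidi-Keraani \cite{HmidiKeraani}, adapted to the homogeneous intersection space $\dot H^{\sct}\cap\dot H^s$. Note first that the hypothesis $\frac{4s}{d}<\alpha<2^\star$ is equivalent to $0<\sct<s$, and that $\alct$ and $2+2^\star$ are exactly the critical Sobolev exponents associated to $\dot H^{\sct}$ and to $\dot H^s$; hence for $q\in(\alct,2+2^\star)$ the number $\gamma_q:=\frac d2-\frac dq$ lies strictly between $\sct$ and $s$, and interpolation yields $\|f\|_{\dot H^{\gamma_q}}\lesssim\|f\|_{\dot H^{\sct}}^{1-\tau}\|f\|_{\dot H^s}^{\tau}$ for a suitable $\tau\in(0,1)$. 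A useful structural point is that the decomposition involves only translations and no dilations: a dilating bubble staying bounded in $\dot H^{\sct}\cap\dot H^s$ is forced, by the two-sided pinching $\sct<\gamma_q<s$, to vanish in every $L^q$ with $q\in(\alct,2+2^\star)$, so such bubbles are irrelevant for $(\ref{profile error})$.

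The technical core is the following vanishing statement: if $(w_n)_{n\geq1}$ is bounded in $\dot H^{\sct}\cap\dot H^s$ and $w_n(\cdot+y_n)\rightharpoonup0$ weakly in $\dot H^{\sct}\cap\dot H^s$ for \emph{every} sequence $(y_n)\subset\R^d$, then $\|w_n\|_{L^q}\to0$ for all $q\in(\alct,2+2^\star)$. I would prove it through a Littlewood-Paley splitting $w_n=P_{<1/N}w_n+P_{[1/N,N]}w_n+P_{>N}w_n$: Bernstein's inequality together with $\gamma_q<s$ gives $\|P_{>N}w_n\|_{L^q}\lesssim N^{\gamma_q-s}\|w_n\|_{\dot H^s}$, while $\gamma_q>\sct$ gives $\|P_{<1/N}w_n\|_{L^q}\lesssim N^{-(\gamma_q-\sct)}\|w_n\|_{\dot H^{\sct}}$, both small uniformly in $n$ once $N$ is large; the middle piece is frequency-localized away from $0$ and $\infty$, hence uniformly bounded in $L^2$ and in $L^\infty$, so if its $L^q$ norm did not tend to $0$ one could select, along a subsequence, $y_n$ with $|P_{[1/N,N]}w_n(y_n)|\gtrsim1$, while $P_{[1/N,N]}[w_n(\cdot+y_n)]$ is equicontinuous and tends weakly to $0$, hence converges to $0$ locally uniformly by an Arzel\`a-Ascoli argument, forcing $P_{[1/N,N]}w_n(y_n)\to0$ — a contradiction. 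I expect this interplay between the dyadic tail estimates and the local compactness to be the main obstacle; the remaining steps are essentially bookkeeping.

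With the vanishing statement in hand, I would run the standard extraction iteration. Set $v_n^0:=v_n$; given $v_n^{l-1}$, let $\eta_l$ be the supremum, over all sequences $(y_n)\subset\R^d$ and all weak limits $\phi$ of $v_n^{l-1}(\cdot+y_n)$ in $\dot H^{\sct}\cap\dot H^s$, of $\|\phi\|_{\dot H^{\sct}}$. If $\eta_l>0$, choose $(x_n^l)$ and $V^l$ with $v_n^{l-1}(\cdot+x_n^l)\rightharpoonup V^l$ and $\|V^l\|_{\dot H^{\sct}}\geq\frac12\eta_l$, and set $v_n^l:=v_n^{l-1}-V^l(\cdot-x_n^l)$, so that $v_n^l(\cdot+x_n^l)\rightharpoonup0$. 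Pairwise orthogonality $(\ref{pairwise orthogonality})$ is established by induction along the extraction: assuming it holds among $x_n^1,\dots,x_n^{j-1}$, if $x_n^j-x_n^k$ stayed bounded for some $k<j$ then, passing to a subsequence with $x_n^j-x_n^k\to z$ and writing $v_n^{j-1}=v_n^k-\sum_{i=k+1}^{j-1}V^i(\cdot-x_n^i)$, one obtains $v_n^{j-1}(\cdot+x_n^j)\rightharpoonup0$ (the first term because $v_n^k(\cdot+x_n^k)\rightharpoonup0$, the others because $|x_n^i-x_n^k|\to\infty$), which forces $V^j=0$, a contradiction. The identities $(\ref{profile identity 1})$ and $(\ref{profile identity 2})$ follow by expanding $\big\|\sum_{j=1}^lV^j(\cdot-x_n^j)+v_n^l\big\|_{\dot H^\gamma}^2$ for $\gamma\in\{\sct,s\}$ and checking that each cross term tends to $0$ as $n\to\infty$: the profile-profile terms by $(\ref{pairwise orthogonality})$ and a Riemann-Lebesgue argument in the Fourier variables (every $V^j$ belongs to $\dot H^\gamma$ for all $\gamma\in[\sct,s]$), the profile-remainder terms because $v_n^l(\cdot+x_n^j)\rightharpoonup0$ for $j\leq l$.

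Finally, summing $(\ref{profile identity 1})$ over $j$ gives $\sum_{j\geq1}\|V^j\|_{\dot H^{\sct}}^2\leq\limsup_n\|v_n\|_{\dot H^{\sct}}^2<\infty$, so $\|V^j\|_{\dot H^{\sct}}\to0$, and since $\eta_l\leq2\|V^l\|_{\dot H^{\sct}}$ also $\eta_l\to0$. To deduce $(\ref{profile error})$, suppose $\limsup_n\|v_n^l\|_{L^q}\geq\delta>0$ for infinitely many $l$; the Littlewood-Paley tail estimates (uniform in $l$, since $(v_n^l)$ is bounded in $\dot H^{\sct}\cap\dot H^s$ by the profile identities just established) reduce this to $\limsup_n\|P_{[1/N,N]}v_n^l\|_{L^q}\gtrsim\delta$ for a fixed large $N$, and the local-compactness argument from the vanishing statement then produces, for each such $l$, a sequence $(y_n)$ and a weak limit of $v_n^l(\cdot+y_n)$ whose $\dot H^{\sct}$-norm is bounded below by a constant $c(\delta,N)>0$; hence $\eta_{l+1}\geq c(\delta,N)$ for infinitely many $l$, contradicting $\eta_l\to0$. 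If instead the iteration stops at a stage where $\eta_l=0$, then $(\ref{profile error})$ follows directly from the vanishing statement applied to $(v_n^l)_n$. This finishes the proof.
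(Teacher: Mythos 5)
Your proposal is correct and follows essentially the same route as the paper: a translation-only Hmidi--Keraani extraction with $\eta_l\to 0$, the Hilbert-space Pythagorean expansions from weak convergence, and control of the remainder in $L^q$ via a low/middle/high frequency splitting in which the tails are handled by the two norms $\dot H^{\sct}$ and $\dot H^s$ (using $\sct<\gamma_q<s$) and the band-limited middle piece is reduced to the vanishing of translated weak limits. The only difference is cosmetic: where you use Littlewood--Paley projections plus an Arzel\`a--Ascoli/pointwise-evaluation argument for the middle piece, the paper convolves with a fixed band-limited bump and bounds its $L^\infty$ norm directly by duality against elements of $\Omega(v_n^l)$, obtaining a quantitative estimate in terms of $\eta(v_n^l)$ with an explicit choice of the cutoff radius.
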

\begin{remark} \label{rem profile decomposition}
	In the case $\sct=0$ or $\alpha=\frac{4s}{d}$, Theorem $\ref{theorem profile decomposition}$ is exactly Theorem 3.1 in Ref. \cite{Dinh-masscritical} due to the fact $\dot{H}^0=L^2$ and $L^2 \cap \dot{H}^s = H^s$.
\end{remark}
\noindent \textit{Proof of Theorem $\ref{theorem profile decomposition}$.} The proof is based on the argument of Ref. \cite{HmidiKeraani} (see also Refs. \cite{Guoblowup, Dinh-blowupfourth}). For reader's convenience, we give some details. Since $\dot{H}^{\sct} \cap \dot{H}^s$ is a Hilbert space, we denote $\Omega(v_n)$ the set of functions obtained as weak limits of sequences of the translated $v_n(\cdot + x_n)$ with $(x_n)_{n\geq 1}$ a sequence in $\R^d$. Set
\[
\eta(v_n):= \sup \{ \|v\|_{\dot{H}^{\sct}} + \|v\|_{\dot{H}^s} : v \in \Omega(v_n)\}.
\]
Clearly,
\[
\eta(v_n) \leq \limsup_{n\rightarrow \infty} \|v_n\|_{\dot{H}^{\sct}} + \|v_n\|_{\dot{H}^s}.
\]
We will show that there exist a sequence $(V^j)_{j\geq 1}$ of $\Omega(v_n)$ and a family $(x_n^j)_{j\geq 1}$ of sequences in $\R^d$ such that for every $k \ne j$,
\[
|x_n^k - x_n^j| \rightarrow \infty, 
\]
as $n \rightarrow \infty$ and up to a subsequence, we can write for every $l\geq 1$ and every $x \in \R^d$,
\[
v_n(x) = \sum_{j=1}^l V^j(x-x_n^j) + v^l_n(x), 
\]
with $\eta(v^l_n) \rightarrow 0$ as $l \rightarrow \infty$. Moreover, $(\ref{profile identity 1})$ and $(\ref{profile identity 2})$ hold as $n \rightarrow \infty$. \newline
\indent Indeed, if $\eta(v_n) =0$, then we take $V^j=0$ for all $j\geq 1$ and the proof is done. Otherwise we choose $V^1 \in \Omega(v_n)$ such that
\[
\|V^1\|_{\dot{H}^{\sct}} + \|V^1\|_{\dot{H}^s} \geq \frac{1}{2} \eta(v_n) >0. 
\]
By definition, there exists  a sequence $(x^1_n)_{n\geq 1}$ in $\R^d$ such that up to a subsequence,
\[
v_n(\cdot + x^1_n) \rightharpoonup V^1 \text{ weakly in } \dot{H}^{\sct} \cap \dot{H}^s.
\]
Set $v_n^1(x):= v_n(x) - V^1(x-x^1_n)$. It follows that $v^1_n(\cdot + x^1_n) \rightharpoonup 0$ weakly in $\dot{H}^{\sct} \cap \dot{H}^s$ and
thus
\begin{align*}
\|v_n\|^2_{\dot{H}^{\sct}} &= \|V^1\|^2_{\dot{H}^{\sct}} + \|v^1_n\|^2_{\dot{H}^{\sct}} + o_n(1),  \\
\|v_n\|^2_{\dot{H}^s} &= \|V^1\|^2_{\dot{H}^s} + \|v^1_n\|^2_{\dot{H}^s} + o_n(1),
\end{align*}
as $n \rightarrow \infty$. We next replace $(v_n)_{n\geq 1}$ by $(v^1_n)_{n\geq 1}$ and repeat the same argument. If $\eta(v^1_n) =0$, then we take $V^j=0$ for all $j \geq 2$ and the proof is done. Otherwise there exist $V^2 \in \Omega(v^1_n)$ and a sequence $(x^2_n)_{n\geq 1}$ in $\R^d$ such that
\[
\|V^2\|_{\dot{H}^{\sct}} + \|V^2\|_{\dot{H}^s} \geq \frac{1}{2} \eta(v^1_n)>0,
\]
and
\[
v^1_n(\cdot+x^2_n) \rightharpoonup V^2 \text{ weakly in } \dot{H}^{\sct} \cap \dot{H}^s.
\]
Set $v^2_n(x) := v^1_n(x) - V^2(x-x^2_n)$. It follows that
$v^2_n(\cdot +x^2_n) \rightharpoonup 0$ weakly in $\dot{H}^{\sct} \cap \dot{H}^s$ and 
\begin{align*}
\|v^1_n\|^2_{\dot{H}^{\sct}} & = \|V^2\|^2_{\dot{H}^{\sct}} + \|v^2_n\|^2_{\dot{H}^{\sct}} + o_n(1), \\
\|v^1_n\|^2_{\dot{H}^s} &= \|V^2\|^2_{\dot{H}^s} + \|v^2_n\|^2_{\dot{H}^s} + o_n(1),
\end{align*}
as $n \rightarrow \infty$. We now show that 
\[
|x^1_n - x^2_n| \rightarrow \infty, 
\]
as $n \rightarrow \infty$. Indeed, if it is not true, then up to a subsequence, $x^1_n - x^2_n \rightarrow x_0$ as $n \rightarrow \infty$ for some $x_0 \in \R^d$. Rewriting 
\[
v^1_n(x + x^2_n) = v^1_n(x +(x^2_n -x^1_n) + x^1_n),
\]
and using the fact $v^1_n (\cdot + x^1_n)$ converges weakly to $0$, we see that $V^2=0$. This implies that $\eta(v^1_n)=0$, which is a contradiction. An argument of iteration and orthogonal extraction allows us to construct the family $(x^j_n)_{j\geq 1}$ of sequences in $\R^d$ and the sequence $(V^j)_{j\geq 1}$ of functions in $\dot{H}^{\sct} \cap \dot{H}^s$ satisfying the claim above. Moreover, the convergence of the series $\sum_{j\geq 1}^\infty \|V^j\|^2_{\dot{H}^{\sct}} + \|V^j\|^2_{\dot{H}^s}$ implies that 
\[
\|V^j\|^2_{\dot{H}^{\sct}} + \|V^j\|^2_{\dot{H}^s} \rightarrow 0, \quad \text{as } j \rightarrow \infty.
\]
By construction,
\[
\eta(v^j_n) \leq 2 \left(\|V^{j+1}\|_{\dot{H}^{\sct}} + \|V^{j+1}\|_{\dot{H}^s}\right),
\]
which shows that $\eta(v^j_n) \rightarrow 0$ as $j \rightarrow \infty$. It remains to show $(\ref{profile error})$. To this end, we introduce for $R>1$ a function $\phi_R \in \mathcal{S}$ satisfying $\hat{\phi}_R: \R^d \rightarrow [0,1]$ and
\begin{align*}
\hat{\phi}_R(\xi) = \left\{
\begin{array}{clc}
1 &\text{if}& 1/R \leq |\xi| \leq R, \\
0 &\text{if}& |\xi| \leq 1/2R \vee |\xi|\geq 2R.
\end{array}
\right.
\end{align*}
We write
\[
v^l_n = \phi_R * v^l_n + (\delta - \phi_R) * v^l_n,
\]
where $\delta$ is the Dirac function and $*$ is the convolution operator. Let $q \in (\alct, 2+2^\star)$ be fixed. By Sobolev embedding and the Plancherel formula, 
\begin{align*}
\|(\delta -\phi_R) * v^l_n\|_{L^q} &\lesssim \|(\delta-\phi_R) * v^l_n\|_{\dot{H}^\beta} \lesssim \Big( \int |\xi|^{2\beta} |(1-\hat{\phi}_R(\xi)) \hat{v}^l_n(\xi)|^2 d\xi\Big)^{1/2} \\
&\lesssim \Big( \int_{|\xi|\leq 1/R} |\xi|^{2\beta} |\hat{v}^l_n(\xi)|^2 d\xi\Big)^{1/2} + \Big( \int_{|\xi|\geq R} |\xi|^{2\beta} |\hat{v}^l_n(\xi)|^2 d\xi\Big)^{1/2} \\
&\lesssim R^{\sct-\beta} \|v^l_n\|_{\dot{H}^{\sct}} + R^{\beta-s} \|v^l_n\|_{\dot{H}^s},
\end{align*}
where $\beta=\frac{d}{2}-\frac{d}{q} \in (\sct,s)$. Besides, the H\"older interpolation inequality yields
\begin{align*}
\|\phi_R * v^l_n\|_{L^q} &\lesssim \|\phi_R * v^l_n\|^{\frac{\alct}{q}}_{L^{\alct}} \|\phi_R * v^l_n\|^{1-\frac{\alct}{q}}_{L^\infty} \\
&\lesssim \|v^l_n\|^{\frac{\alct}{q}}_{\dot{H}^{\sct}} \|\phi_R * v^l_n\|^{1-\frac{\alct}{q}}_{L^\infty}.
\end{align*}
Observe that
\[
\limsup_{n\rightarrow \infty} \|\phi_R * v^l_n\|_{L^\infty} = \sup_{x_n} \limsup_{n\rightarrow \infty} |\phi_R * v^l_n(x_n)|.
\]
By the definition of $\Omega(v^l_n)$, we see that
\[
\limsup_{n\rightarrow \infty} \|\phi_R * v^l_n\|_{L^\infty} \leq \sup \Big\{ \Big| \int \phi_R(-x) v(x) dx\Big| : v \in \Omega(v^l_n)\Big\}.
\]
The Plancherel formula then implies
\begin{align*}
\Big|\int \phi_R(-x) v(x) dx \Big| &= \Big| \int \hat{\phi}_R(\xi) \hat{v}(\xi) d\xi\Big| \lesssim \|\|\xi|^{-\sct}\hat{\phi}_R\|_{L^2} \||\xi|^{\sct}\hat{v}\|_{L^2} \\
&\lesssim R^{\frac{d}{2}-\sct} \|\hat{\phi}_R\|_{\dot{H}^{-\sct}} \|v\|_{\dot{H}^{\sct}} \lesssim R^{\frac{2s}{\alpha}} \eta(v^l_n).
\end{align*}
Thus, for every $l\geq 1$,
\begin{align*}
\limsup_{n\rightarrow \infty} \|v^l_n\|_{L^q} &\lesssim \limsup_{n\rightarrow \infty} \|(\delta-\phi_R)* v^l_n\|_{L^q} + \limsup_{n\rightarrow \infty} \|\phi_R * v^l_n\|_{L^q} \\
&\lesssim R^{\sct-\beta} \|v^l_n\|_{\dot{H}^{\sct}} + R^{\beta-s} \|v^l_n\|_{\dot{H}^s} + \|v^l_n\|^{\frac{\alct}{q}}_{\dot{H}^{\sct}} \left[R^{\frac{2s}{\alpha}} \eta(v^l_n)\right]^{\left(1-\frac{\alct}{q}\right)}.
\end{align*}
Choosing $R= \left[\eta(v^l_n)^{-1}\right]^{\frac{\alpha}{2s}-\eps}$ for some $\eps>0$ small enough, we learn that 
\begin{align*}
\limsup_{n\rightarrow \infty} \|v^l_n\|_{L^q} &\lesssim \eta(v^l_n)^{(\beta-\sct)\left(\frac{\alpha}{2s}-\eps\right)} \|v^l_n\|_{\dot{H}^{\sct}} + \eta(v^l_n)^{(s-\beta)\left(\frac{\alpha}{2s}-\eps\right)} \|v^l_n\|_{\dot{H}^s} \\
&\mathrel{\phantom{\lesssim \eta(v^l_n)^{(\beta-\sct)\left(\frac{\alpha}{2s}-\eps\right)} \|v^l_n\|_{\dot{H}^{\sct}}}} +\eta(v^l_n)^{\eps \frac{2s}{\alpha} \left(1-\frac{\alct}{q}\right)} \| v^l_n\|_{\dot{H}^{\sct}}^{\frac{\alct}{q}}.
\end{align*}
Letting $l \rightarrow \infty$ and using the uniform boundedness of $(v^l_n)_{l\geq 1}$ in $\dot{H}^{\sct} \cap \dot{H}^s$ together with the fact that $\eta(v^l_n) \rightarrow 0$ as $l \rightarrow \infty$, we obtain 
\[
\limsup_{n \rightarrow \infty} \|v^l_n\|_{L^q} \rightarrow 0, \quad \text{as } l \rightarrow \infty.
\]
This completes the proof of Theorem $\ref{theorem profile decomposition}$.
\defendproof 
\section{Variational analysis} \label{section variational analysis}
\setcounter{equation}{0}
Let $d\geq 1$, $0<s<1$ and $\frac{4s}{d}<\alpha<2^\star$ where $2^\star$ is given in $(\ref{define 2 star})$. We consider the variational problems
\begin{align*}
A_{\text{GN}}&:=\max\{H(f): f \in \dot{H}^{\sct} \cap \dot{H}^s\}, & H(f)&:= \|f\|^{\alpha+2}_{L^{\alpha+2}} \div \left[ \|f\|^{\alpha}_{\dot{H}^{\sct}} \|f\|^2_{\dot{H}^s} \right], \\
B_{\text{GN}}&:= \max\{K(f): f \in L^{\alct}\cap \dot{H}^s \}, & K(f)&:= \|f\|^{\alpha+2}_{L^{\alpha+2}} \div \left[ \|f\|^{\alpha}_{L^{\alct}} \|f\|^2_{\dot{H}^s} \right].
\end{align*}
Here $A_{\text{GN}}$ and $B_{\text{GN}}$ are respectively sharp constants in the following Gagliardo-Nirenberg inequalities
\begin{align*}
\|f\|^{\alpha+2}_{L^{\alpha+2}} &\leq A_{\text{GN}} \|f\|^{\alpha}_{\dot{H}^{\sct}} \|f\|^2_{\dot{H}^s}, \\
\|f\|^{\alpha+2}_{L^{\alpha+2}} &\leq B_{\text{GN}} \|f\|^{\alpha}_{L^{\alct}} \|f\|^2_{\dot{H}^s}. 
\end{align*}
\begin{lemma} \label{lem maximizer NLFS}
	If $g$ and $h$ are maximizers of $H(f)$ and $K(f)$ respectively, then $g$ and $h$ satisfy
	\begin{align}
	A_{\emph{GN}} \|g\|^{\alpha}_{\dot{H}^{\sce}} (-\Delta)^s g + \frac{\alpha}{2} A_{\emph{GN}} \|g\|^{\alpha-2}_{\dot{H}^{\sct}}\|g\|^2_{\dot{H}^s} (-\Delta)^{\sce}g -\frac{\alpha+2}{2}  |g|^{\alpha} g&=0, \label{maximizer equation 1} \\
	B_{\emph{GN}} \|h\|^{\alpha}_{L^{\alce}} (-\Delta)^s h + \frac{\alpha}{2} B_{\emph{GN}} \|h\|^{\alpha-\alce}_{L^{\alce}} \|h\|^2_{\dot{H}^s}|h|^{\alce-2} h -\frac{\alpha+2}{2}  |h|^{\alpha} h&=0, \label{maximizer equation 2}
	\end{align}
	respectively.
\end{lemma}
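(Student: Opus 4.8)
The plan is to derive the Euler--Lagrange equations satisfied by a maximizer via the standard variational argument: if $g$ maximizes $H(f)$, then for any test function $\varphi \in \dot{H}^{\sct} \cap \dot{H}^s$ the function $t \mapsto H(g + t\varphi)$ has a critical point at $t=0$, so $\frac{d}{dt}\big|_{t=0} H(g+t\varphi) = 0$. Since $g$ is a maximizer, in particular $g \ne 0$ and all three quantities $\|g\|_{L^{\alpha+2}}$, $\|g\|_{\dot H^{\sct}}$, $\|g\|_{\dot H^s}$ are strictly positive, so $H$ is differentiable near $g$ along any direction and the quotient rule applies. I would carry this out separately for $H$ and for $K$, since the two functionals differ only in the middle factor ($\|f\|_{\dot H^{\sct}}^\alpha$ versus $\|f\|_{L^{\alct}}^\alpha$).

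First I would record the three elementary derivative computations at $t=0$, using that $g$ (and $h$) may be taken real-valued without loss of generality, or more carefully working with the real part of the $L^2$ pairing:
\begin{align*}
\frac{d}{dt}\Big|_{t=0}\|g+t\varphi\|^{\alpha+2}_{L^{\alpha+2}} &= (\alpha+2)\,\re{\int |g|^{\alpha} g\,\overline{\varphi}\,dx}, \\
\frac{d}{dt}\Big|_{t=0}\|g+t\varphi\|^{2}_{\dot H^\gamma} &= 2\,\re{\int (-\Delta)^{\gamma} g\,\overline{\varphi}\,dx}, \\
\frac{d}{dt}\Big|_{t=0}\|g+t\varphi\|^{\alpha}_{\dot H^{\sct}} &= \alpha\,\|g\|^{\alpha-2}_{\dot H^{\sct}}\,\re{\int (-\Delta)^{\sct} g\,\overline{\varphi}\,dx},
\end{align*}
and for the $K$ case the analogous formula $\frac{d}{dt}\big|_{t=0}\|h+t\varphi\|^{\alpha}_{L^{\alct}} = \alpha\,\|h\|^{\alpha-\alct}_{L^{\alct}}\,\re{\int |h|^{\alct-2} h\,\overline{\varphi}\,dx}$. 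Then I apply the quotient rule to $H(g+t\varphi)=N(t)/D(t)$ with $N(t)=\|g+t\varphi\|^{\alpha+2}_{L^{\alpha+2}}$ and $D(t)=\|g+t\varphi\|^{\alpha}_{\dot H^{\sct}}\|g+t\varphi\|^2_{\dot H^s}$: the condition $N'(0)D(0)=N(0)D'(0)$, together with the identity $N(0)=A_{\mathrm{GN}}D(0)$ (that $g$ is a maximizer), gives $N'(0)=A_{\mathrm{GN}}D'(0)$. Expanding $D'(0)$ by the product rule into the $(-\Delta)^s$ and $(-\Delta)^{\sct}$ contributions and substituting the derivative formulas above yields, for every $\varphi$,
\[
\frac{\alpha+2}{2}\,\re{\int |g|^\alpha g\,\overline{\varphi}} = A_{\mathrm{GN}}\,\|g\|^\alpha_{\dot H^{\sct}}\,\re{\int (-\Delta)^s g\,\overline{\varphi}} + \frac{\alpha}{2}\,A_{\mathrm{GN}}\,\|g\|^{\alpha-2}_{\dot H^{\sct}}\|g\|^2_{\dot H^s}\,\re{\int (-\Delta)^{\sct} g\,\overline{\varphi}},
\]
and since $\varphi$ ranges over a dense subspace (and one may replace $\varphi$ by $i\varphi$ to kill the real parts) this is exactly the distributional identity \eqref{maximizer equation 1}. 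The computation for $K$ is word-for-word the same with $\|h\|^\alpha_{\dot H^{\sct}}(-\Delta)^{\sct}h$ replaced by $\|h\|^{\alpha-\alct}_{L^{\alct}}\|h\|^2_{\dot H^s}$ times $|h|^{\alct-2}h$ (up to the combinatorial constant $\alpha/2$), giving \eqref{maximizer equation 2}.

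The only genuine technical point — and the step I would flag as the main obstacle — is justifying the differentiability under the integral sign, i.e. that $t\mapsto \|g+t\varphi\|^{\alpha+2}_{L^{\alpha+2}}$ is differentiable at $t=0$ with the claimed derivative when $\alpha+2$ need not be an integer and $\alpha$ may be less than $1$ (so $z\mapsto|z|^\alpha z$ is only Hölder, not $C^1$). This is handled in the usual way: the pointwise difference quotient converges a.e. to $(\alpha+2)\re(|g|^\alpha g\,\overline\varphi)$, and one produces a dominating integrable function using the elementary inequality $\big||a+b|^{\alpha+2}-|a|^{\alpha+2}\big| \lesssim |b|(|a|^{\alpha+1}+|b|^{\alpha+1})$ together with the Gagliardo--Nirenberg embedding $\dot H^{\sct}\cap\dot H^s \hookrightarrow L^{\alpha+2}$ (and $\hookrightarrow L^{(\alpha+2)(\alpha+1)/\alpha}$ by interpolation, which holds since the exponent lies between $\alct$ and $2+2^\star$), so dominated convergence applies. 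For the $K$ functional one needs the analogous control of $\|h+t\varphi\|^{\alct}_{L^{\alct}}$, which is identical since $\alct<2+2^\star$. Everything else is bookkeeping, and once the Euler--Lagrange identities hold in the sense of distributions the lemma is proved.
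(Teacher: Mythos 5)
Your proposal is correct and follows essentially the same route as the paper: compute the G\^ateaux derivatives of the numerator and denominator at the maximizer, impose $\frac{d}{d\eps}\big|_{\eps=0}H(g+\eps\phi)=0$, and use $\|g\|^{\alpha+2}_{L^{\alpha+2}}=A_{\text{GN}}\|g\|^\alpha_{\dot H^{\sct}}\|g\|^2_{\dot H^s}$ to arrive at \eqref{maximizer equation 1}, with the identical scheme for $K$ and \eqref{maximizer equation 2}. The only addition is your dominated-convergence justification of differentiability of $t\mapsto\|g+t\varphi\|^{\alpha+2}_{L^{\alpha+2}}$, which the paper leaves implicit; this is extra care rather than a different argument.
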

\begin{proof}
	Since $g$ is a maximizer of $H$ in $\dot{H}^{\sct} \cap \dot{H}^s$, $g$ satisfies the Euler-Lagrange equation
	\[
	\frac{d}{d\eps}\Big|_{\vert \eps=0} H(g+\eps \phi) =0,
	\]
	for all $\phi \in \mathcal{S}_0$. A calculation shows
	\begin{align*}
	\left.\frac{d}{d\eps}\right|_{\eps=0} \|g+\eps \phi\|^{\alpha+2}_{L^{\alpha+2}} &= (\alpha+2)  \int \re{(|g|^\alpha g \overline{\phi})} dx,\\
	\left.\frac{d}{d\eps}\right|_{\eps=0} \|g+\eps \phi\|^\alpha_{\dot{H}^{\sct}} &= \alpha \|g\|^{\alpha-2}_{\dot{H}^{\sct}}\int \re{((-\Delta)^{\sct} g \overline{\phi})}dx, 
	\end{align*}
	and 
	\begin{align*}
	\left.\frac{d}{d\eps}\right|_{\eps=0} \|g+\eps \phi\|^2_{\dot{H}^s} =2 \int \re{((-\Delta)^s g \overline{\phi})} dx.
	\end{align*}
	We thus get
	\begin{align*}
	(\alpha+2) \|g\|^{\alpha}_{\dot{H}^{\sct}} \|g\|^2_{\dot{H}^s} |g|^\alpha g -  \alpha \|g\|^{\alpha+2}_{L^{\alpha+2}} \|g\|^{\alpha-2}_{\dot{H}^{\sct}} \|g\|^2_{\dot{H}^s} (-\Delta)^{\sct} g - 2 \|g\|^{\alpha+2}_{L^{\alpha+2}} \|g\|^\alpha_{\dot{H}^{\sct}} (-\Delta)^s g =0.
	\end{align*}
	Dividing by $2\|g\|^{\alpha}_{\dot{H}^{\sct}} \|g\|^2_{\dot{H}^s}$, we obtain $(\ref{maximizer equation 1})$. The proof of $(\ref{maximizer equation 2})$ is similar by using 
	\[
	\left.\frac{d}{d\eps}\right|_{\eps=0} \|h+\eps \phi\|^\alpha_{L^{\alct}} = \alpha \|h\|^{\alpha-\alct}_{L^{\alct}} \int \re{(|h|^{\alct-2} h \overline{\phi})}dx.
	\]
	The proof is complete.
\end{proof}
A first application of the profile decomposition given in Theorem $\ref{theorem profile decomposition}$ is the following variational structure of the sharp constants $A_{\text{GN}}$ and $B_{\text{GN}}$. 
\begin{proposition}[Variational structure of sharp constants] \label{prop variational structure sharp constants}
	Let $d\geq 1$, $0<s<1$ and $\frac{4s}{d}<\alpha<2^\star$.
	\begin{itemize}
		\item The sharp constant $A_{\emph{GN}}$ is attained at a function $U \in \dot{H}^{\sce} \cap \dot{H}^s$ of the form
		\[
		U(x) = a Q(\lambda x + x_0),
		\]
		for some $a \in \C^*, \lambda>0$ and $x_0 \in \R^d$, where $Q$ is a solution to the elliptic equation
		\begin{align}
		(-\Delta)^s Q +  (-\Delta)^{\sce} Q - |Q|^\alpha Q =0. \label{elliptic equation sobolev}
		\end{align}
		Moreover, 
		\[
		A_{\emph{GN}} = \frac{\alpha+2}{2} \|Q\|^{-\alpha}_{\dot{H}^{\sce}}.
		\]
		\item The sharp constant $B_{\emph{GN}}$ is attained at a function $V \in L^{\alce} \cap \dot{H}^s$ of the form 
		\[
		V(x) = b R(\mu x + y_0),
		\]
		for some $b \in \C^*, \mu>0$ and $y_0 \in \R^d$, where $R$ is a solution to the elliptic equation
		\begin{align}
		(-\Delta)^s R + |R|^{\alce-2} R - |R|^\alpha R =0. \label{elliptic equation lebesgue}
		\end{align}
		Moreover, 
		\[
		B_{\emph{GN}} = \frac{\alpha+2}{2} \|R\|^{-\alpha}_{L^{\alce}}.
		\]
	\end{itemize}
\end{proposition}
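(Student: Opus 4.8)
The plan is to prove both assertions by a concentration--compactness argument built on the profile decomposition of Theorem~\ref{theorem profile decomposition}; I describe the argument for $A_{\text{GN}}$ in detail, the case of $B_{\text{GN}}$ being parallel. First I record the symmetries of $H$: it is invariant under $f\mapsto \mu\, f(\lambda\cdot+x_0)$ for $\mu\in\C^*$, $\lambda>0$, $x_0\in\R^d$, the dilation invariance being a consequence of $(\ref{critical sobolev exponent})$ since $H(f(\lambda\cdot))=\lambda^{-(\sct-d/2)\alpha-2s}H(f)=H(f)$. Taking a maximizing sequence $(f_n)_{n\ge1}$ for $H$, I may therefore use the dilation to normalize the ratio $\|f_n\|_{\dot{H}^s}/\|f_n\|_{\dot{H}^{\sct}}$ (possible because $s>\sct$, equivalently $\alpha<2^\star$) and then the scalar multiplication to arrange $\|f_n\|_{\dot{H}^{\sct}}=\|f_n\|_{\dot{H}^s}=1$ and $\|f_n\|^{\alpha+2}_{L^{\alpha+2}}\to A_{\text{GN}}$. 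In particular $(f_n)$ is bounded in $\dot{H}^{\sct}\cap\dot{H}^s$, and under the standing hypothesis $\tfrac{4s}{d}<\alpha<2^\star$ the exponent $\alpha+2$ lies in the open interval $(\alct,2+2^\star)$, so Theorem~\ref{theorem profile decomposition} produces profiles $V^j$, sequences $(x_n^j)$, and remainders $v_n^l$ satisfying $(\ref{pairwise orthogonality})$--$(\ref{profile identity 2})$ and with $\limsup_n\|v_n^l\|_{L^{\alpha+2}}\to0$ as $l\to\infty$.

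Write $a_j:=\|V^j\|_{\dot{H}^{\sct}}$ and $b_j:=\|V^j\|_{\dot{H}^s}$. The identities $(\ref{profile identity 1})$--$(\ref{profile identity 2})$ give $\sum_{j\ge1}a_j^2\le1$ and $\sum_{j\ge1}b_j^2\le1$, while the (non-sharp) Gagliardo--Nirenberg inequality $(\ref{sharp gagliardo-nirenberg intro})$ applied to each profile yields $\|V^j\|^{\alpha+2}_{L^{\alpha+2}}\le A_{\text{GN}}\,a_j^\alpha b_j^2$. A standard Brezis--Lieb type computation, using the pairwise orthogonality $(\ref{pairwise orthogonality})$ of the cores and the $L^{\alpha+2}$-smallness of $v_n^l$, decouples the nonlinear term: $\lim_n\|f_n\|^{\alpha+2}_{L^{\alpha+2}}=\sum_{j\ge1}\|V^j\|^{\alpha+2}_{L^{\alpha+2}}$. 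Combining these,
\[
A_{\text{GN}}=\sum_{j\ge1}\|V^j\|^{\alpha+2}_{L^{\alpha+2}}\le A_{\text{GN}}\sum_{j\ge1}a_j^\alpha b_j^2\le A_{\text{GN}}\bigl(\sup_j a_j^\alpha\bigr)\sum_{j\ge1}b_j^2\le A_{\text{GN}},
\]
the last step because $a_j^2\le\sum_k a_k^2\le1$. Hence every inequality is an equality; in particular $\sum_j(1-a_j^\alpha)b_j^2=0$, so for each $j$ either $b_j=0$ (whence $V^j=0$, being an element of a homogeneous Sobolev space with vanishing $\dot{H}^s$-norm) or $a_j=1$, and the constraint $\sum_j a_j^2\le1$ then leaves exactly one nontrivial profile, say $V^1$, with $a_1=b_1=1$. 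Feeding this back into $(\ref{profile identity 1})$--$(\ref{profile identity 2})$ gives $\|v_n^1\|_{\dot{H}^{\sct}}+\|v_n^1\|_{\dot{H}^s}\to0$, hence (again by $(\ref{sharp gagliardo-nirenberg intro})$) $\|v_n^1\|_{L^{\alpha+2}}\to0$, so $f_n(\cdot+x_n^1)\to V^1$ strongly in $\dot{H}^{\sct}\cap\dot{H}^s$ and in $L^{\alpha+2}$, and therefore $H(V^1)=\lim_n H(f_n)=A_{\text{GN}}$. Thus $U:=V^1$ is a maximizer.

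Since $U$ maximizes $H$ with $\|U\|_{\dot{H}^{\sct}}=\|U\|_{\dot{H}^s}=1$, Lemma~\ref{lem maximizer NLFS} gives
\[
A_{\text{GN}}(-\Delta)^s U+\tfrac{\alpha}{2}A_{\text{GN}}(-\Delta)^{\sct}U-\tfrac{\alpha+2}{2}|U|^\alpha U=0.
\]
Writing $U(x)=a\,Q(\lambda x+x_0)$ and matching the three coefficients after the substitution, the choices $\lambda^{2(s-\sct)}=\tfrac{\alpha}{2}$ (admissible since $s>\sct$) and $a^\alpha=\tfrac{2A_{\text{GN}}\lambda^{2s}}{\alpha+2}$ turn this into precisely $(-\Delta)^sQ+(-\Delta)^{\sct}Q-|Q|^\alpha Q=0$, i.e. $(\ref{elliptic equation sobolev})$. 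As $Q$ is again a maximizer of $H$ (scaling/multiplication invariance), Lemma~\ref{lem maximizer NLFS} applied with $g=Q$ shows $Q$ also solves
\[
A_{\text{GN}}\|Q\|^\alpha_{\dot{H}^{\sct}}(-\Delta)^sQ+\tfrac{\alpha}{2}A_{\text{GN}}\|Q\|^{\alpha-2}_{\dot{H}^{\sct}}\|Q\|^2_{\dot{H}^s}(-\Delta)^{\sct}Q-\tfrac{\alpha+2}{2}|Q|^\alpha Q=0,
\]
and comparing the coefficients of $(-\Delta)^sQ$ and of $|Q|^\alpha Q$ with those in $(\ref{elliptic equation sobolev})$ (the two equations being proportional) forces $A_{\text{GN}}\|Q\|^\alpha_{\dot{H}^{\sct}}=\tfrac{\alpha+2}{2}$, which is the claimed formula $A_{\text{GN}}=\tfrac{\alpha+2}{2}\|Q\|^{-\alpha}_{\dot{H}^{\sct}}$.

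The statement for $B_{\text{GN}}$ follows from the same four steps, with the $\dot{H}^{\sct}$-norm replaced throughout by the $L^{\alct}$-norm (which scales the same way), $(\ref{maximizer equation 2})$ in place of $(\ref{maximizer equation 1})$, a maximizing sequence normalized by $\|h_n\|_{L^{\alct}}=\|h_n\|_{\dot{H}^s}=1$, and a final rescaling $V(x)=b\,R(\mu x+y_0)$ turning the Euler--Lagrange equation into $(\ref{elliptic equation lebesgue})$. Here, however, lies the one genuine obstacle: a sequence normalized in $L^{\alct}\cap\dot{H}^s$ need not be bounded in $\dot{H}^{\sct}$, because $L^{\alct}$ is a scale-invariant (critical) space, so Theorem~\ref{theorem profile decomposition} does not apply verbatim and one must instead use---or first establish---a profile decomposition that also records dilation parameters, so as to exclude concentration (bubbling) at the critical exponent $\alct$; granting this, the optimization of the second step and the Euler--Lagrange computation of the third carry over unchanged. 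For $A_{\text{GN}}$, by contrast, the only delicate point is the Brezis--Lieb decoupling of $\|f_n\|^{\alpha+2}_{L^{\alpha+2}}$ along the profiles, which works precisely because $\alpha+2$ is an \emph{interior} exponent of $(\alct,2+2^\star)$, so the remainder contributes nothing.
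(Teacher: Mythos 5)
Your argument for $A_{\text{GN}}$ is essentially the paper's proof: normalize the maximizing sequence by the scaling symmetry so that $\|f_n\|_{\dot{H}^{\sct}}=\|f_n\|_{\dot{H}^s}=1$, apply Theorem~\ref{theorem profile decomposition}, decouple the $L^{\alpha+2}$ norm along the profiles via the pairwise orthogonality and the elementary inequality, conclude that a single profile survives and is a maximizer, and finish with the Euler--Lagrange equation of Lemma~\ref{lem maximizer NLFS} and the rescaling $U=aQ(\lambda\cdot+x_0)$ with $\lambda^{2(s-\sct)}=\alpha/2$. Two small remarks. First, for the identity $A_{\text{GN}}=\frac{\alpha+2}{2}\|Q\|^{-\alpha}_{\dot{H}^{\sct}}$ your ``the two equations are proportional'' step is not automatic (two elliptic equations satisfied by the same $Q$ need not have matching coefficients without pairing against $\overline{Q}$ and $x\cdot\nabla\overline{Q}$); the cleaner route, which the paper takes and which you already have in hand, is to read the constant off from $1=\|U\|^{\alpha}_{\dot{H}^{\sct}}=|a|^{\alpha}\lambda^{-2s}\|Q\|^{\alpha}_{\dot{H}^{\sct}}=\frac{2A_{\text{GN}}}{\alpha+2}\|Q\|^{\alpha}_{\dot{H}^{\sct}}$. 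Second, the obstacle you flag for $B_{\text{GN}}$ --- that a sequence normalized in $L^{\alct}\cap\dot{H}^s$ need not be bounded in $\dot{H}^{\sct}$, so Theorem~\ref{theorem profile decomposition} does not apply verbatim --- is a fair observation; the paper disposes of Item 2 with the single sentence that the proof is ``similar using the Sobolev embedding $\dot{H}^{\sct}\hookrightarrow L^{\alct}$'' and does not address this point either, so you have not missed an argument that the paper supplies, but your proof of Item 2 is, as you acknowledge, not complete as written.
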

\begin{proof}
	We only prove Item 1, the proof for Item 2 is similar using the Sobolev embedding $\dot{H}^{\sct} \hookrightarrow L^{\alct}$. Observe that $H$ is invariant under the scaling
	\[
	f_{\mu, \lambda}(x) := \mu f(\lambda x), \quad \mu, \lambda>0.
	\]
	Indeed, a simple computation shows
	\[
	\|f_{\mu, \lambda} \|^{\alpha+2}_{L^{\alpha+2}} = \mu^{\alpha+2} \lambda^{-d} \|f\|^{\alpha+2}_{L^{\alpha+2}}, \quad \|f_{\mu, \lambda} \|_{\dot{H}^{\sct}}^\alpha 
	= \mu^\alpha \lambda^{-2s}\|f\|^\alpha_{\dot{H}^{\sct}}, \quad \|f_{\mu, \lambda}\|^2_{\dot{H}^s} = \mu^2 \lambda^{2s-d} \|f\|_{\dot{H}^s}^2.
	\]
	Thus, $H(f_{\mu, \lambda}) = H(f)$ for any $\mu, \lambda>0$. Moreover, if we set $g(x) = \mu f(\lambda x)$ with 
	\[
	\mu = \left(\frac{\|f\|^{\frac{d}{2}-s}_{\dot{H}^{\sct}}}{\|f\|_{\dot{H}^s}^{\frac{2s}{\alpha}}}\right)^{\frac{1}{s-\sct}}, \quad \lambda = \left(\frac{\|f\|_{\dot{H}^{\sct}}}{\|f\|_{\dot{H}^s}}\right)^{\frac{1}{s-\sct}},
	\]
	then $\|g\|_{\dot{H}^{\sct}} = \|g\|_{\dot{H}^s} = 1$ and $H(g) = H(f)$. Now let $(v_n)_{n\geq 1}$ be the maximizing sequence of $H$, i.e. $H(v_n) \rightarrow A_{\text{GN}}$ as $n\rightarrow \infty$. By scaling invariance, we may assume that $\|v_n\|_{\dot{H}^{\sct}} = \|v_n\|_{\dot{H}^s} =1$ and $H(v_n) = \|v_n\|^{\alpha+2}_{L^{\alpha+2}} \rightarrow A_{\text{GN}}$ as $n\rightarrow \infty$. It follows that $(v_n)_{n\geq 1}$ is bounded in $\dot{H}^{\sct} \cap \dot{H}^s$, and  the profile decomposition given in Theorem $\ref{theorem profile decomposition}$ shows that there exist a sequence $(V^j)_{j\geq 1}$ of $\dot{H}^{\sct} \cap \dot{H}^s$ functions and a family $(x^j_n)_{j\geq 1}$ of sequences in $\R^d$ such that up to a subsequence, 
	\[
	v_n(x) = \sum_{j=1}^l V^j(x-x^j_n) + v^l_n(x),
	\]
	and $(\ref{pairwise orthogonality}), (\ref{profile error})$, $(\ref{profile identity 1})$ and $(\ref{profile identity 2})$ hold. In particular, for any $l\geq 1$,
	\begin{align}
	\sum_{j=1}^l \|V^j\|^2_{\dot{H}^{\sct}} \leq 1, \quad \sum_{j=1}^l \|V^j\|^2_{\dot{H}^s} \leq 1, \label{bounded sequence variational structure}
	\end{align}
	and
	\[
	\limsup_{n\rightarrow \infty} \|v^l_n\|^{\alpha+2}_{L^{\alpha+2}} \rightarrow 0, \quad \text{as } l \rightarrow \infty. 
	\]
	Thus,
	\begin{align}
	A_{\text{GN}} &= \lim_{n\rightarrow \infty} \|v_n\|^{\alpha+2}_{L^{\alpha+2}} = \limsup_{n\rightarrow \infty} \Big\| \sum_{j=1}^l V^j(\cdot - x^j_n) + v^l_n \Big\|^{\alpha+2}_{L^{\alpha+2}} \nonumber \\
	&\leq \limsup_{n\rightarrow \infty} \Big( \Big\| \sum_{j=1}^l V^j(\cdot - x^j_n)  \Big\|_{L^{\alpha+2}} + \|v^l_n\|_{L^{\alpha+2}} \Big)^{\alpha+2} \nonumber \\
	&\leq \limsup_{n\rightarrow \infty} \Big\| \sum_{j=1}^\infty V^j(\cdot - x^j_n)  \Big\|_{L^{\alpha+2}}^{\alpha+2}. \label{infinite sum}
	\end{align}
	By the elementary inequality 
	\begin{align}
	\left| \Big| \sum_{j=1}^l a_j\Big|^{\alpha+2} - \sum_{j=1}^l |a_j|^{\alpha+2} \right| \leq C \sum_{j \ne k} |a_j| |a_k|^{\alpha+1}, \label{elementary inequality}
	\end{align}
	the pairwise orthogonality $(\ref{pairwise orthogonality})$ leads the mixed terms in the sum $(\ref{infinite sum})$ to vanish as $n\rightarrow \infty$. This shows that
	\[
	A_{\text{GN}} \leq \sum_{j=1}^\infty \|V^j\|^{\alpha+2}_{L^{\alpha+2}}.
	\]
	We also have from the definition of $A_{\text{GN}}$ that
	\[
	\frac{\|V^j\|_{L^{\alpha+2}}^{\alpha+2}}{A_{\text{GN}}} \leq \|V^j\|^{\alpha}_{\dot{H}^{\sct}} \|V^j\|^2_{\dot{H}^s},
	\]
	which implies
	\[
	1 \leq \frac{\sum_{j=1}^\infty \|V^j\|_{L^{\alpha+2}}^{\alpha+2}}{A_{\text{GN}}} \leq \sup_{j\geq 1} \|V^j\|^{\alpha}_{\dot{H}^{\sct}} \sum_{j=1}^\infty \|V^j\|^2_{\dot{H}^s}.
	\]
	Since $\sum_{j\geq 1} \|V^j\|^2_{\dot{H}^{\sct}}$ is convergent, there exists $j_0 \geq 1$ such that
	\[
	\|V^{j_0}\|_{\dot{H}^{\sct}} = \sup_{j\geq 1} \|V^j\|_{\dot{H}^{\sct}}. 
	\]
	By $(\ref{bounded sequence variational structure})$, we see that
	\[
	1 \leq \|V^{j_0}\|^{\alpha}_{\dot{H}^{\sct}} \sum_{j=1}^\infty \|V^j\|^2_{\dot{H}^s} \leq \|V^{j_0}\|_{\dot{H}^{\sct}}^{\alpha}. 
	\]
	It follows from $(\ref{bounded sequence variational structure})$ that $\|V^{j_0}\|_{\dot{H}^{\sct}}=1$ which shows that there is only one term $V^{j_0}$ is non-zero. Hence,
	\[
	\|V^{j_0}\|_{\dot{H}^{\sct}} = \|V^{j_0}\|_{\dot{H}^s} = 1, \quad \|V^{j_0}\|_{L^{\alpha+2}}^{\alpha+2} = A_{\text{GN}}. 
	\]
	It means that $V^{j_0}$ is the maximizer of $H$, and Lemma $\ref{lem maximizer NLFS}$ shows that
	\[
	A_{\text{GN}} (-\Delta)^s V^{j_0} + \frac{\alpha}{2} A_{\text{GN}}  (-\Delta)^{\sct} V^{j_0} -\frac{\alpha+2}{2}  |V^{j_0}|^{\alpha} V^{j_0}=0.
	\]
	Now if we set $V^{j_0}(x)= a Q(\lambda x+x_0)$ for some $a \in \C^*$, $\lambda>0$ and $x_0 \in \R^d$, then $Q$ solves $(\ref{elliptic equation sobolev})$ provided that
	\begin{align}
	|a| = \left(\frac{2 \lambda^{2s} A_{\text{GN}}}{\alpha+2}\right)^{\frac{1}{\alpha}}, \quad \lambda = \Big(\frac{\alpha}{2}\Big)^{\frac{1}{2(s-\sct)}}. \label{choice of a}
	\end{align}
	This shows the existence of solutions to $(\ref{elliptic equation sobolev})$. We next compute the sharp constant $A_{\text{GN}}$ in terms of $Q$. We have
	\[
	1=\|V^{j_0}\|^\alpha_{\dot{H}^{\sct}} = |a|^\alpha \lambda^{-2s} \|Q\|^\alpha_{\dot{H}^{\sct}} = \frac{2A_{\text{GN}}}{\alpha+2} \|Q\|^\alpha_{\dot{H}^{\sct}}.
	\]
	This implies $A_{\text{GN}} = \frac{\alpha+2}{2} \|Q\|^{-\alpha}_{\dot{H}^{\sct}}$. The proof is complete. 
\end{proof}
\begin{remark} \label{rem variational analysis}
	By $(\ref{choice of a})$ and the fact
	\begin{align*}
	1 = \|V^{j_0}\|^\alpha_{\dot{H}^{\sct}} &= |a|^\alpha \lambda^{-2s} \|Q\|^\alpha_{\dot{H}^{\sct}}, \\
	1 = \|V^{j_0}\|^2_{\dot{H}^s} &= |a|^2 \lambda^{2s-d} \|Q\|^2_{\dot{H}^s}, \\
	A_{\text{GN}} =\|V^{j_0}\|^{\alpha+2}_{L^{\alpha+2}} &= |a|^{\alpha+2} \lambda^{-d}\|Q\|^{\alpha+2}_{L^{\alpha+2}},
	\end{align*}
	we have the following Pohozaev identities
	\begin{align}
	\|Q\|^2_{\dot{H}^{\sct}} = \frac{\alpha}{2} \|Q\|^2_{\dot{H}^s} = \frac{\alpha}{\alpha+2} \|Q\|^{\alpha+2}_{L^{\alpha+2}}. \label{pohozaev identities}
	\end{align}
	The above identities can be showed by multiplying $(\ref{elliptic equation sobolev})$ with $\overline{Q}$ and $x \cdot  \nabla \overline{Q}$ and integrating over $\R^d$ and performing integration by parts. Indeed, multiplying $(\ref{elliptic equation sobolev})$ with $\overline{Q}$ and integrating by parts, we get
	\begin{align}
	\|Q\|^2_{\dot{H}^s}  + \|Q\|^2_{\dot{H}^{\sct}} - \|Q\|^{\alpha+2}_{L^{\alpha+2}} =0. \label{pohozaev equation 1}
	\end{align}
	Multiplying $(\ref{elliptic equation sobolev})$ with $x\cdot \nabla \overline{Q}$, integrating by parts and taking the real part, we have
	\begin{align}
	\Big(s-\frac{d}{2}\Big) \|Q\|^2_{\dot{H}^s} + \Big(\sct-\frac{d}{2}\Big) \|Q\|^2_{\dot{H}^{\sct}} + \frac{d}{\alpha+2} \|Q\|^{\alpha+2}_{L^{\alpha+2}} =0. \label{pohozaev equation 2}
	\end{align}
	From $(\ref{pohozaev equation 1})$ and $(\ref{pohozaev equation 2})$, we obtain $(\ref{pohozaev identities})$. Here we use the fact that for $\gamma \geq 0$,
	\begin{align*}
	\re{\int (-\Delta)^\gamma Q x \cdot \nabla \overline{Q} dx }  = \Big(\gamma-\frac{d}{2}\Big) \|Q\|^2_{\dot{H}^\gamma}. 
	\end{align*}
	The Pohozaev identities $(\ref{pohozaev identities})$ imply in particular that 
	\[
	H(Q)=\|Q\|^{\alpha+2}_{L^{\alpha+2}} \div \left[\|Q\|^\alpha_{\dot{H}^{\sct}} \|Q\|^2_{\dot{H}^s} \right] = \frac{\alpha+2}{2} \|Q\|^{-\alpha}_{\dot{H}^{\sct}} = A_{\text{GN}}, \quad E(Q)=0.
	\] 
	Similarly, we have
	\[
	\|R\|^2_{L^{\alct}} = \frac{\alpha}{2} \|R\|^2_{\dot{H}^s} = \frac{\alpha}{\alpha+2} \|R\|^{\alpha+2}_{L^{\alpha+2}}.
	\]
	In particular,
	\[
	K(R) = \|R\|^{\alpha+2}_{L^{\alpha+2}} \div \left[ \|R\|^\alpha_{L^{\alct}} \|R\|^2_{\dot{H}^s} \right] = \frac{\alpha+2}{2} \|R\|^{-\alpha}_{L^{\alct}} = B_{\text{GN}}, \quad E(R) = 0.
	\]
\end{remark}
\begin{definition}[Ground state] \label{definition ground state}
	\begin{itemize}
		\item We call \textbf{Sobolev ground states} the maximizers of $H$ which are solutions to $(\ref{elliptic equation sobolev})$. We denote the set of Sobolev ground states by $\Gc$. 
		\item We call \textbf{Lebesgue ground states} the maximizers of $K$ which are solutions to $(\ref{elliptic equation lebesgue})$. We denote the set of Lebesgue ground states by $\Hc$. 
	\end{itemize}
\end{definition}
Note that by Lemma $\ref{lem maximizer NLFS}$, if $g, h$ are  respectively Sobolev and Lebesgue ground states, then 
\[
A_{\text{GN}} = \frac{\alpha+2}{2} \|g\|^{-\alpha}_{\dot{H}^{\sct}}, \quad B_{\text{GN}} = \frac{\alpha+2}{2} \|h\|^{-\alpha}_{L^{\alct}}.
\]
This implies that Sobolev ground states have the same $\dot{H}^{\sct}$-norm, and all Lebesgue ground states have the same $L^{\alct}$-norm. Denote
\begin{align}
S_{\text{gs}}&:= \|g\|_{\dot{H}^{\sct}}, \quad \forall g \in \Gc, \label{critical sobolev norm} \\
L_{\text{gs}}&:= \|h\|_{L^{\alct}}, \quad \forall h \in \Hc. \label{critical lebesgue norm} 
\end{align}
In particular, we have the following sharp Gagliardo-Nirenberg inequalities
\begin{align}
\|f\|^{\alpha+2}_{L^{\alpha+2}} &\leq A_{\text{GN}} \|f\|^{\alpha}_{\dot{H}^{\sct}} \|f\|^2_{\dot{H}^s}, \label{sharp gagliardo-nirenberg inequality sobolev}\\
\|f\|^{\alpha+2}_{L^{\alpha+2}} &\leq B_{\text{GN}} \|f\|^{\alpha}_{L^{\alct}} \|f\|^2_{\dot{H}^s}, \label{sharp gagliardo-nirenberg inequality lebesgue}
\end{align}
with 
\[
A_{\text{GN}} = \frac{\alpha+2}{2} S_{\text{gs}}^{-\alpha}, \quad B_{\text{GN}} = \frac{\alpha+2}{2} L_{\text{gs}}^{-\alpha}.
\]
Another application of the profile decomposition given in Theorem $\ref{theorem profile decomposition}$ is the following compactness lemma.
\begin{theorem}[Compactness lemma] \label{theorem compactness lemma} Let $d\geq 1$, $0<s<1$ and $\frac{4s}{d}<\alpha<2^\star$. Let $(v_n)_{n\geq 1}$ be a bounded sequence in $\dot{H}^{\sce} \cap \dot{H}^s$ such that
	\[
	\limsup_{n\rightarrow \infty} \|v_n\|_{\dot{H}^s} \leq M, \quad \limsup_{n\rightarrow \infty} \|v_n\|_{L^{\alpha+2}} \geq m.
	\]
	\begin{itemize}
		\item Then there exists a sequence $(x_n)_{n\geq 1}$ in $\R^d$ such that up to a subsequence,
		\[
		v_n(\cdot + x_n) \rightharpoonup V \text{ weakly in } \dot{H}^{\sce} \cap \dot{H}^s,
		\]
		for some $V \in \dot{H}^{\sce} \cap \dot{H}^s$ satisfying
		\begin{align}
		\|V\|^\alpha_{\dot{H}^{\sce}} \geq \frac{2}{\alpha+2} \frac{ m^{\alpha+2}}{M^2} S_{\emph{gs}}^\alpha. \label{lower bound critical sobolev NLFS}
		\end{align}
		\item Then there exists a sequence $(y_n)_{n\geq 1}$ in $\R^d$ such that up to a subsequence,
		\[
		v_n(\cdot + y_n) \rightharpoonup W \text{ weakly in } L^{\alce} \cap \dot{H}^s,
		\]
		for some $W \in L^{\alce} \cap \dot{H}^s$ satisfying
		\begin{align}
		\|W\|^\alpha_{L^{\alce}} \geq \frac{2}{\alpha+2} \frac{ m^{\alpha+2}}{M^2} L_{\emph{gs}}^\alpha. \label{lower bound critical lebesgue NLFS}
		\end{align}
	\end{itemize}
\end{theorem}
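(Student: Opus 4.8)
The plan is to read off both statements from the profile decomposition of Theorem \ref{theorem profile decomposition} combined with the sharp Gagliardo--Nirenberg inequalities (\ref{sharp gagliardo-nirenberg inequality sobolev}) and (\ref{sharp gagliardo-nirenberg inequality lebesgue}). The first observation is that, since $\frac{4s}{d}<\alpha<2^\star$, one has $\alct=\frac{d\alpha}{2s}>2$ and $\alct<\alpha+2<2+2^\star$ (with $2^\star$ as in (\ref{define 2 star})), so the exponent $\alpha+2$ lies strictly inside the interval $(\alct,2+2^\star)$ on which the error terms of the decomposition are controlled. Applying Theorem \ref{theorem profile decomposition} to a subsequence of $(v_n)_{n\ge1}$ produces profiles $(V^j)_{j\ge1}\subset\dot H^{\sct}\cap\dot H^s$ and sequences $(x_n^j)_{j\ge1}$ with $|x_n^j-x_n^k|\to\infty$ for $j\ne k$, such that $v_n=\sum_{j=1}^l V^j(\cdot-x_n^j)+v_n^l$ for every $l$, with $\limsup_n\|v_n^l\|_{L^{\alpha+2}}\to0$ as $l\to\infty$ by (\ref{profile error}). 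By the triangle inequality, the elementary inequality (\ref{elementary inequality}), and the pairwise orthogonality (\ref{pairwise orthogonality}) — which forces the cross terms $\int|V^j(\cdot-x_n^j)|\,|V^k(\cdot-x_n^k)|^{\alpha+1}$ to vanish as $n\to\infty$, as one sees by approximating the profiles by compactly supported functions and using $|x_n^j-x_n^k|\to\infty$ — it follows that for every $l$,
\[
m\le\limsup_{n\to\infty}\|v_n\|_{L^{\alpha+2}}\le\Big(\sum_{j=1}^l\|V^j\|_{L^{\alpha+2}}^{\alpha+2}\Big)^{\frac{1}{\alpha+2}}+\limsup_{n\to\infty}\|v_n^l\|_{L^{\alpha+2}},
\]
and letting $l\to\infty$ yields $m^{\alpha+2}\le\sum_{j\ge1}\|V^j\|_{L^{\alpha+2}}^{\alpha+2}$.

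For the first item, I would estimate each profile via (\ref{sharp gagliardo-nirenberg inequality sobolev}): $\|V^j\|_{L^{\alpha+2}}^{\alpha+2}\le A_{\text{GN}}\|V^j\|_{\dot H^{\sct}}^{\alpha}\|V^j\|_{\dot H^s}^{2}\le A_{\text{GN}}\big(\sup_{k\ge1}\|V^k\|_{\dot H^{\sct}}^{\alpha}\big)\|V^j\|_{\dot H^s}^{2}$, then sum over $j$ and use $\sum_{j\ge1}\|V^j\|_{\dot H^s}^2\le\limsup_n\|v_n\|_{\dot H^s}^2\le M^2$ from (\ref{profile identity 2}). Since $\sum_{k\ge1}\|V^k\|_{\dot H^{\sct}}^2$ converges (as a consequence of (\ref{profile identity 1})), the supremum $\sup_{k\ge1}\|V^k\|_{\dot H^{\sct}}$ is attained at some index $j_0$, so $m^{\alpha+2}\le A_{\text{GN}}M^2\|V^{j_0}\|_{\dot H^{\sct}}^{\alpha}$; inserting $A_{\text{GN}}=\frac{\alpha+2}{2}S_{\text{gs}}^{-\alpha}$ gives exactly (\ref{lower bound critical sobolev NLFS}) with $V:=V^{j_0}$. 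Taking $x_n:=x_n^{j_0}$, the weak convergence $v_n(\cdot+x_n)\rightharpoonup V$ in $\dot H^{\sct}\cap\dot H^s$ is built into the construction of the decomposition: $v_n^{j_0-1}(\cdot+x_n^{j_0})\rightharpoonup V^{j_0}$, while each earlier profile $V^k(\cdot+x_n^{j_0}-x_n^k)$ with $k<j_0$ tends weakly to $0$ because translations by $x_n^{j_0}-x_n^k$, whose modulus diverges, carry any fixed vector to $0$ weakly.

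The second item is handled identically, using (\ref{sharp gagliardo-nirenberg inequality lebesgue}) in place of (\ref{sharp gagliardo-nirenberg inequality sobolev}): $\|V^j\|_{L^{\alpha+2}}^{\alpha+2}\le B_{\text{GN}}\big(\sup_{k\ge1}\|V^k\|_{L^{\alct}}^{\alpha}\big)\|V^j\|_{\dot H^s}^{2}$, and the supremum of $\|V^k\|_{L^{\alct}}$ is again attained because $\|V^k\|_{L^{\alct}}\lesssim\|V^k\|_{\dot H^{\sct}}\to0$ by the Sobolev embedding $\dot H^{\sct}\hookrightarrow L^{\alct}$. With $B_{\text{GN}}=\frac{\alpha+2}{2}L_{\text{gs}}^{-\alpha}$ this produces (\ref{lower bound critical lebesgue NLFS}) for $W:=V^{j_1}$, $y_n:=x_n^{j_1}$, and the weak convergence in $\dot H^{\sct}\cap\dot H^s$ passes to weak convergence in $L^{\alct}\cap\dot H^s$ through that embedding. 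I do not expect a genuine obstacle here; the only points requiring care are the bookkeeping of the iterated $\limsup_n$ and $\lim_{l\to\infty}$ and the standard cross-term estimate, both of which are routine given the profile decomposition.
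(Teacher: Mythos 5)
Your proposal is correct and follows essentially the same route as the paper: profile decomposition, vanishing of the cross terms via the elementary inequality and pairwise orthogonality, the sharp Gagliardo--Nirenberg bound with $\sup_j\|V^j\|_{\dot H^{\sct}}^\alpha$ factored out, and attainment of the supremum at some $j_0$. The only (harmless) deviation is in justifying $v_n(\cdot+x_n^{j_0})\rightharpoonup V^{j_0}$: you read it off directly from the iterative construction plus weak vanishing of divergent translates, whereas the paper identifies the weak limit of the remainder as zero through its vanishing $L^{\alpha+2}$-norm; both are valid.
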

\begin{remark} \label{rem compactness lemma}
	The lower bounds $(\ref{lower bound critical sobolev NLFS})$ and $(\ref{lower bound critical lebesgue NLFS})$ are optimal. In fact, if we take $v_n=Q \in \mathcal{G}$ in the first case and $v_n=R \in \mathcal{H}$ in the second case where $Q$ and $R$ are given in Proposition $\ref{prop variational structure sharp constants}$, then we get the equalities. 
\end{remark}
\noindent \textit{Proof of Theorem $\ref{theorem compactness lemma}$.} We only consider the first case, the second case is treated similarly using the Sobolev embedding $\dot{H}^{\sct} \hookrightarrow L^{\alct}$. By Theorem $\ref{theorem profile decomposition}$, there exist a sequence $(V^j)_{j\geq 1}$ of $\dot{H}^{\sct} \cap \dot{H}^s$ functions and a family $(x^j_n)_{j\geq 1}$ of sequences in $\R^d$ such that up to a subsequence, the sequence $(v_n)_{n\geq 1}$ can be written as
\[
v_n(x) = \sum_{j=1}^l V^j(x-x^j_n) + v^l_n(x),
\]
and $(\ref{profile error})$, $(\ref{profile identity 1})$ and $(\ref{profile identity 2})$ hold. This implies that
\begin{align}
m^{\alpha+2} &\leq \limsup_{n\rightarrow \infty} \|v_n\|_{L^{\alpha+2}}^{\alpha+2} = \limsup_{n\rightarrow \infty} \Big\| \sum_{j=1}^l V^j(\cdot -x^j_n) + v^l_n\Big\|^{\alpha+2}_{L^{\alpha+2}} \nonumber \\
&\leq \limsup_{n\rightarrow \infty} \Big( \Big\|\sum_{j=1}^l V^j(\cdot -x^j_n) \Big\|_{L^{\alpha+2}} + \|v^l_n\|_{L^{\alpha+2}}\Big)^{\alpha+2} \nonumber \\
&\leq \limsup_{n\rightarrow \infty} \Big\| \sum_{j=1}^\infty V^j(\cdot -x^j_n)\Big\|_{L^{\alpha+2}}^{\alpha+2}. \label{compactness lemma proof}
\end{align}
By the elementary inequality $(\ref{elementary inequality})$ and the pairwise orthogonality $(\ref{pairwise orthogonality})$, the mixed terms in the sum $(\ref{compactness lemma proof})$ vanish as $n\rightarrow \infty$. We thus get
\[
m^{\alpha+2} \leq \sum_{j=1}^\infty \|V^j\|_{L^{\alpha+2}}^{\alpha+2}.
\]
By the sharp Gagliardo-Nirenberg inequality $(\ref{sharp gagliardo-nirenberg inequality sobolev})$, we bound
\begin{align*}
\sum_{j=1}^\infty \|V^j\|^{\alpha+2}_{L^{\alpha+2}} \leq \frac{\alpha+2}{2} \frac{1}{S_{\text{gs}}^\alpha} \sup_{j\geq 1} \|V^j\|^{\alpha}_{\dot{H}^{\sct}} \sum_{j=1}^\infty \|V^j\|^2_{\dot{H}^s}. 
\end{align*}
By $(\ref{profile identity 2})$, we infer that
\[
\sum_{j=1}^\infty \|V^j\|^2_{\dot{H}^s}  \leq \limsup_{n\rightarrow \infty} \|v_n\|^2_{\dot{H}^s} \leq M^2.
\]
Therefore,
\[
\sup_{j\geq 1} \|V^j\|^{\alpha}_{\dot{H}^{\sct}} \geq \frac{2}{\alpha+2} \frac{m^{\alpha+2}}{M^2} S_{\text{gs}}^\alpha.
\]
Since the series $\sum_{j\geq 1} \|V^j\|^2_{\dot{H}^{\sct}}$ is convergent, the supremum above is attained. That is, there exists $j_0$ such that
\[
\|V^{j_0}\|^{\alpha}_{\dot{H}^{\sct}} \geq \frac{2}{\alpha+2} \frac{m^{\alpha+2}}{M^2} S_{\text{gs}}^\alpha.
\]
Rewriting
\[
v_n(x+ x^{j_0}_n) = V^{j_0} (x) + \sum_{1\leq j \leq l \atop j \ne j_0} V^j(x+ x_n^{j_0} - x^j_n) + \tilde{v}^l_n(x),
\]
with $\tilde{v}^l_n(x):= v^l_n(x+x^{j_0}_n)$, it follows from the pairwise orthogonality of the family $(x_n^j)_{j\geq 1}$ that
\[
V^j( \cdot +x^{j_0}_n -x^j_n) \rightharpoonup 0 \text{ weakly in } \dot{H}^{\sct} \cap \dot{H}^s,
\]
as $n \rightarrow \infty$ for every $j \ne j_0$. This shows that
\begin{align}
v_n(\cdot + x^{j_0}_n) \rightharpoonup V^{j_0} + \tilde{v}^l, \quad \text{as } n \rightarrow \infty, \label{compactness lemma proof 1}
\end{align}
where $\tilde{v}^l$ is the weak limit of $(\tilde{v}^l_n)_{n\geq 1}$. On the other hand, 
\[
\|\tilde{v}^l\|_{L^{\alpha+2}} \leq \limsup_{n\rightarrow \infty} \|\tilde{v}^l_n\|_{L^{\alpha+2}} = \limsup_{n\rightarrow \infty} \|v^l_n\|_{L^{\alpha+2}} \rightarrow 0, \quad \text{as } l \rightarrow \infty. 
\]
By the uniqueness of the weak limit $(\ref{compactness lemma proof 1})$, we get $\tilde{v}^l=0$ for every $l \geq j_0$. Therefore, we obtain 
\[
v_n(\cdot + x^{j_0}_n) \rightharpoonup V^{j_0}.
\]
The sequence $(x^{j_0}_n)_{n\geq 1}$ and the function $V^{j_0}$ now fulfill the conditions of Theorem $\ref{theorem compactness lemma}$. This ends the proof.
\defendproof  \newline
\indent We end this section by giving some applications of sharp Gagliardo-Nirenberg inequalities $(\ref{sharp gagliardo-nirenberg inequality sobolev})$ and $(\ref{sharp gagliardo-nirenberg inequality lebesgue})$.
\begin{proposition}[Global existence in $\dot{H}^{\sct} \cap \dot{H}^s$] \label{prop global existence 1}
	Let $d\geq 2$, $\frac{d}{2d-1} \leq s <1$ and $\frac{4s}{d}<\alpha<\frac{4s}{d-2s}$. Let $u_0 \in \dot{H}^{\sce} \cap \dot{H}^s$ be radial and the corresponding solution $u$ to $(\ref{focusing intercritical NLFS})$ defined on the maximal forward time interval $[0,T)$. Assume that
	\begin{align}
	\sup_{t\in [0,T)} \|u(t)\|_{\dot{H}^{\sce}} < S_{\emph{gs}}. \label{assumption global existence 1}
	\end{align}
	Then $T=+\infty$, i.e. the solution exists globally in time.
\end{proposition}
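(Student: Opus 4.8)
The plan is to combine the sharp Gagliardo-Nirenberg inequality $(\ref{sharp gagliardo-nirenberg inequality sobolev})$ with the conservation of energy to derive a uniform-in-time bound on $\|u(t)\|_{\dot{H}^s}$, and then conclude by the blowup alternative in Proposition $\ref{prop LWP H dot s}$.

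First I would introduce $\rho:= S_{\text{gs}}^{-1}\sup_{t\in[0,T)}\|u(t)\|_{\dot{H}^{\sct}}$, which by hypothesis $(\ref{assumption global existence 1})$ satisfies $\rho<1$. For any $t\in[0,T)$, applying $(\ref{sharp gagliardo-nirenberg inequality sobolev})$ with $A_{\text{GN}}=\tfrac{\alpha+2}{2}S_{\text{gs}}^{-\alpha}$ to the energy functional gives
\[
E(u(t)) = \frac12\|u(t)\|^2_{\dot{H}^s} - \frac1{\alpha+2}\|u(t)\|^{\alpha+2}_{L^{\alpha+2}} \geq \frac12\|u(t)\|^2_{\dot{H}^s}\left(1-\Big(\frac{\|u(t)\|_{\dot{H}^{\sct}}}{S_{\text{gs}}}\Big)^\alpha\right) \geq \frac12(1-\rho^\alpha)\|u(t)\|^2_{\dot{H}^s}.
\]
Since $E(u(t))=E(u_0)$ is conserved and $1-\rho^\alpha>0$, this yields $\tfrac12(1-\rho^\alpha)\|u(t)\|^2_{\dot{H}^s}\le E(u_0)$ for all $t$ (in particular the hypotheses force $E(u_0)\ge 0$), hence
\[
\sup_{t\in[0,T)}\|u(t)\|^2_{\dot{H}^s} \le \frac{2E(u_0)}{1-\rho^\alpha} < \infty.
\]
Together with $(\ref{assumption global existence 1})$ this gives $\sup_{t\in[0,T)}\big(\|u(t)\|_{\dot{H}^{\sct}}+\|u(t)\|_{\dot{H}^s}\big)<\infty$. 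If $T<+\infty$, the blowup alternative of Proposition $\ref{prop LWP H dot s}$ forces $\lim_{t\uparrow T}\big(\|u(t)\|_{\dot{H}^{\sct}}+\|u(t)\|_{\dot{H}^s}\big)=\infty$, a contradiction; therefore $T=+\infty$.

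There is essentially no hard step here: it is the standard ``energy trapping below the ground state'' mechanism. The one point requiring care is that the bound on $\|u(t)\|_{\dot{H}^s}$ degenerates as $\rho\to 1$, so one must genuinely exploit that $(\ref{assumption global existence 1})$ is a \emph{strict} inequality on the \emph{supremum} (not merely the $\limsup$) of $\|u(t)\|_{\dot{H}^{\sct}}$ in order to keep $1-\rho^\alpha$ bounded away from zero. It is also worth noting the structural feature distinguishing this from the $H^s$ theory: there is no conserved mass term available, so the argument must close using the conservation of energy alone, controlling the full $\dot{H}^{\sct}\cap\dot{H}^s$ norm directly.
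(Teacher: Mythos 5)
Your proof is correct and follows essentially the same route as the paper: bound the energy from below via the sharp Gagliardo--Nirenberg inequality $(\ref{sharp gagliardo-nirenberg inequality sobolev})$, use conservation of energy together with the strict bound $(\ref{assumption global existence 1})$ to get $\sup_{t\in[0,T)}\|u(t)\|_{\dot{H}^s}<\infty$, and conclude by the blowup alternative of Proposition $\ref{prop LWP H dot s}$. Your explicit quantification via $\rho<1$ and the observation that the coefficient $1-\rho^\alpha$ must stay bounded away from zero merely make precise what the paper leaves implicit.
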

\begin{proof}
	Note that the assumption on $d, s$, $\alpha$ and $u_0$ comes from the local theory (see Section $\ref{section preliminaries}$). By the sharp Gagliardo-Nirenberg inequality $(\ref{sharp gagliardo-nirenberg inequality sobolev})$, we bound
	\begin{align*}
	E(u(t)) &=\frac{1}{2} \|u(t)\|^2_{\dot{H}^s} -\frac{1}{\alpha+2} \|u(t)\|^{\alpha+2}_{L^{\alpha+2}} \\
	&\geq \frac{1}{2} \left( 1- \Big( \frac{\|u(t)\|_{\dot{H}^{\sct}}}{S_{\text{gs}}}\Big)^\alpha\right) \|u(t)\|^2_{\dot{H}^s}.
	\end{align*}
	Thanks to the conservation of energy and the assumption $(\ref{assumption global existence 1})$, we obtain $\sup_{t\in [0,T)} \|u(t)\|_{\dot{H}^s} <\infty$. By the blowup alternative given in Proposition $\ref{prop LWP H dot s}$ and $(\ref{assumption global existence 1})$, the solution exists globally in time. The proof is complete.
\end{proof}
\begin{proposition} \label{prop global existence 2} 
	Let $d\geq 2$, $\frac{d}{2d-1} \leq s <1$ and $\frac{4s}{d}<\alpha<\frac{4s}{d-2s}$. Let $u_0 \in \dot{H}^{\sct} \cap \dot{H}^s$ be radial and the corresponding solution $u$ to $(\ref{focusing intercritical NLFS})$ defined on the maximal forward time interval $[0,T)$. Assume that
	\begin{align}
	S_{\emph{gs}} \leq \sup_{t\in [0,T)} \|u(t)\|_{\dot{H}^{\sct}} < \infty, \quad \sup_{t\in [0,T)} \|u(t)\|_{L^{\alce}} < L_{\emph{gs}}. \label{assumption global existence 2}
	\end{align}
	Then $T=+\infty$, i.e. the solution exists globally in time.
\end{proposition}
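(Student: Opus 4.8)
\textit{Proof of Proposition $\ref{prop global existence 2}$ (sketch).} The plan is to run the argument of Proposition $\ref{prop global existence 1}$, but using the Lebesgue-norm sharp Gagliardo--Nirenberg inequality $(\ref{sharp gagliardo-nirenberg inequality lebesgue})$ in place of the Sobolev-norm one. First I would recall from the local theory (Proposition $\ref{prop LWP H dot s}$) that for radial $u_0 \in \dot{H}^{\sct} \cap \dot{H}^s$ the solution enjoys the conservation of energy, and that the blowup alternative reads: either $T=+\infty$, or $T<+\infty$ and $\lim_{t\uparrow T}\big(\|u(t)\|_{\dot{H}^{\sct}}+\|u(t)\|_{\dot{H}^s}\big)=\infty$. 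Hence it suffices to show that both $\|u(t)\|_{\dot{H}^{\sct}}$ and $\|u(t)\|_{\dot{H}^s}$ stay bounded on $[0,T)$; the first is exactly the finiteness part of assumption $(\ref{assumption global existence 2})$, so the task reduces to a uniform $\dot{H}^s$ bound.

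For that, I would apply $(\ref{sharp gagliardo-nirenberg inequality lebesgue})$ with $B_{\text{GN}}=\frac{\alpha+2}{2}L_{\text{gs}}^{-\alpha}$ to estimate the potential energy, getting
\[
E(u(t)) = \frac{1}{2}\|u(t)\|^2_{\dot{H}^s} - \frac{1}{\alpha+2}\|u(t)\|^{\alpha+2}_{L^{\alpha+2}} \geq \frac{1}{2}\left(1 - \Big(\frac{\|u(t)\|_{L^{\alct}}}{L_{\text{gs}}}\Big)^\alpha\right)\|u(t)\|^2_{\dot{H}^s}.
\]
By the second part of $(\ref{assumption global existence 2})$ there is $\delta_0>0$ with $\sup_{t\in[0,T)}\big(\|u(t)\|_{L^{\alct}}/L_{\text{gs}}\big)^\alpha \leq 1-\delta_0$, so the bracket is bounded below by $\delta_0/2$. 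Combining this with the conservation of energy $E(u(t))=E(u_0)$ yields $\|u(t)\|^2_{\dot{H}^s} \leq \tfrac{4}{\delta_0}E(u_0)$ for all $t\in[0,T)$, i.e. a uniform $\dot{H}^s$ bound.

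Having bounded both $\|u(t)\|_{\dot{H}^{\sct}}$ (by hypothesis) and $\|u(t)\|_{\dot{H}^s}$ (just established) uniformly on $[0,T)$, the blowup alternative forces $T=+\infty$, which is the claim. I do not expect a genuine obstacle here: the proof is a direct variational-energy argument. The only point worth flagging is the role of the hypothesis $S_{\text{gs}}\leq \sup_{t}\|u(t)\|_{\dot{H}^{\sct}}$; this lower bound is not used in the argument itself — it merely delimits the regime complementary to Proposition $\ref{prop global existence 1}$ — and the substantive assumptions are the two finiteness/smallness conditions in $(\ref{assumption global existence 2})$. \defendproof
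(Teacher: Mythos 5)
Your proof is correct and is essentially the paper's own argument: the paper proves Proposition \ref{prop global existence 2} by repeating the proof of Proposition \ref{prop global existence 1} with the Lebesgue-norm inequality $(\ref{sharp gagliardo-nirenberg inequality lebesgue})$ in place of $(\ref{sharp gagliardo-nirenberg inequality sobolev})$, exactly as you do, and your observation that the lower bound $S_{\text{gs}}\leq\sup_t\|u(t)\|_{\dot H^{\sct}}$ plays no role beyond separating the two regimes is also accurate.
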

The proof is similar to the one of Proposition $\ref{prop global existence 1}$ by using the shap Gagliardo-Nirenberg inequality $(\ref{sharp gagliardo-nirenberg inequality lebesgue})$. 
\section{Blowup concentration} \label{section blowup concentration}
\setcounter{equation}{0}
\begin{theorem} [Blowup concentration] \label{theorem concentration NLFS}
	Let $d\geq 2$, $\frac{d}{2d-1} \leq s <1$ and $\frac{4s}{d}<\alpha<\frac{4s}{d-2s}$. Let $u_0 \in \dot{H}^{\sct} \cap \dot{H}^s$ be radial such that the corresponding solution $u$ to $(\ref{focusing intercritical NLFS})$ blows up at finite time $0<T<+\infty$. Assume that the solution satisfies $(\ref{bounded intro})$.
	Let $a(t)>0$ be such that 
	\begin{align}
	a(t) \|u(t)\|_{\dot{H}^s}^{\frac{1}{s-\sce}} \rightarrow \infty, \label{condition of a}
	\end{align}
	as $t\uparrow T$. Then there exist $x(t), y(t) \in \R^d$ such that
	\begin{align}
	\liminf_{t\uparrow T} \int_{|x-x(t)| \leq a(t)} |(-\Delta)^{\frac{\sce}{2}}u(t,x)|^2 dx \geq S_{\emph{gs}}^2, \label{sobolev concentration}
	\end{align}
	and
	\begin{align}
	\liminf_{t\uparrow T} \int_{|x-y(t)| \leq a(t)} |u(t,x)|^{\alce} dx \geq L_{\emph{gs}}^2. \label{lebesgue concentration}
	\end{align}
\end{theorem}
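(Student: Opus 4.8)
\noindent\textit{Sketch of proof.} The plan is to run the rescaling-and-compactness argument of Hmidi--Keraani in the $\dot{H}^{\sct}$-critical setting. Fix an arbitrary sequence $t_n\uparrow T$; since $T<\infty$, the blowup alternative of Proposition \ref{prop LWP H dot s} together with the uniform bound $(\ref{bounded intro})$ forces $\|u(t_n)\|_{\dot{H}^s}\to\infty$ and $\|u(t_n)\|_{\dot{H}^{\sct}}\leq C$. Set $\rho_n:=\|u(t_n)\|_{\dot{H}^s}^{-1/(s-\sct)}$ and
\[
v_n(x):=\rho_n^{2s/\alpha}\,u(t_n,\rho_n x).
\]
Because the map $f\mapsto\lambda^{2s/\alpha}f(\lambda\cdot)$ leaves $\|\cdot\|_{\dot{H}^{\sct}}$ and $\|\cdot\|_{L^{\alct}}$ invariant and multiplies $\|\cdot\|_{\dot{H}^s}^2$ and $\|\cdot\|_{L^{\alpha+2}}^{\alpha+2}$ by $\lambda^{2(s-\sct)}$, the choice of $\rho_n$ gives $\|v_n\|_{\dot{H}^s}=1$, $\|v_n\|_{\dot{H}^{\sct}}=\|u(t_n)\|_{\dot{H}^{\sct}}\leq C$ and $\|v_n\|_{L^{\alpha+2}}^{\alpha+2}=\|u(t_n)\|_{\dot{H}^s}^{-2}\|u(t_n)\|_{L^{\alpha+2}}^{\alpha+2}$. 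Conservation of energy rewrites the last quantity as $\tfrac{\alpha+2}{2}-(\alpha+2)E(u_0)\|u(t_n)\|_{\dot{H}^s}^{-2}\to\tfrac{\alpha+2}{2}$. Thus $(v_n)$ is bounded in $\dot{H}^{\sct}\cap\dot{H}^s$ and satisfies the hypotheses of the compactness lemma (Theorem \ref{theorem compactness lemma}) with $M=1$ and $m^{\alpha+2}=\tfrac{\alpha+2}{2}$.

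Applying Theorem \ref{theorem compactness lemma}, after passing to a subsequence there are $x_n\in\R^d$ and $V\in\dot{H}^{\sct}\cap\dot{H}^s$ with $v_n(\cdot+x_n)\rightharpoonup V$ in $\dot{H}^{\sct}\cap\dot{H}^s$ and $\|V\|_{\dot{H}^{\sct}}^\alpha\geq\tfrac{2}{\alpha+2}\tfrac{m^{\alpha+2}}{M^2}S_{\text{gs}}^\alpha=S_{\text{gs}}^\alpha$, and similarly $y_n\in\R^d$, $W\in L^{\alct}\cap\dot{H}^s$ with $v_n(\cdot+y_n)\rightharpoonup W$ and $\|W\|_{L^{\alct}}\geq L_{\text{gs}}$. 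To transfer this to the original scale I would use that $(-\Delta)^{\sct/2}$ commutes with translations, so $(-\Delta)^{\sct/2}v_n(\cdot+x_n)\rightharpoonup(-\Delta)^{\sct/2}V$ in $L^2$; multiplying by the fixed cutoff $\mathbf{1}_{\{|x|\leq R\}}$ keeps weak $L^2$-convergence, and weak lower semicontinuity of the $L^2$-norm gives, for every $R>0$,
\[
\liminf_{n\to\infty}\int_{|x|\leq R}\bigl|(-\Delta)^{\sct/2}v_n(x+x_n)\bigr|^2\,dx\geq\int_{|x|\leq R}\bigl|(-\Delta)^{\sct/2}V\bigr|^2\,dx.
\]
By scale-invariance of $\dot{H}^{\sct}$ the left integral equals $\int_{|z-\rho_n x_n|\leq\rho_n R}|(-\Delta)^{\sct/2}u(t_n,z)|^2\,dz$; writing $x(t_n):=\rho_n x_n$ and noting that $(\ref{condition of a})$ says precisely $a(t_n)/\rho_n\to\infty$, so $\rho_n R\leq a(t_n)$ for $n$ large, we obtain $\liminf_n\int_{|z-x(t_n)|\leq a(t_n)}|(-\Delta)^{\sct/2}u(t_n)|^2\,dz\geq\int_{|x|\leq R}|(-\Delta)^{\sct/2}V|^2\,dx$; letting $R\to\infty$ produces the bound $S_{\text{gs}}^2$. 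The estimate $(\ref{lebesgue concentration})$ follows in the same way from $\|W\|_{L^{\alct}}\geq L_{\text{gs}}$, using that restriction to $B_R$ is weak-to-weak continuous on the reflexive space $L^{\alct}$ and that the $L^{\alct}(B_R)$-norm is weakly lower semicontinuous.

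Finally, to replace ``along $(t_n)$'' by $\liminf_{t\uparrow T}$ with a definite choice of $x(t)$, set $g(t):=\sup_{x\in\R^d}\int_{|z-x|\leq a(t)}|(-\Delta)^{\sct/2}u(t,z)|^2\,dz$ and argue by contradiction: if $\liminf_{t\uparrow T}g(t)<S_{\text{gs}}^2$ one picks $t_n\uparrow T$ realizing the liminf and runs the previous two paragraphs along $(t_n)$ to get $\liminf_n g(t_n)\geq S_{\text{gs}}^2$, a contradiction. Hence $\liminf_{t\uparrow T}g(t)\geq S_{\text{gs}}^2$, and selecting $x(t)$ with $\int_{|z-x(t)|\leq a(t)}|(-\Delta)^{\sct/2}u(t)|^2\,dz\geq g(t)-(T-t)$ gives $(\ref{sobolev concentration})$; the point $y(t)$ for $(\ref{lebesgue concentration})$ is produced identically. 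I expect the only genuinely delicate point to be the localization of the nonlocal quantities: one must use that $\int_{B_R}|(-\Delta)^{\sct/2}u|^2$ is the squared $L^2$-norm over $B_R$ of the translation-commuting function $(-\Delta)^{\sct/2}u$, \emph{not} a localized homogeneous Sobolev norm of $u$, so that ordinary weak lower semicontinuity (after multiplication by a fixed spatial cutoff) applies in $L^2$ and, analogously, in $L^{\alct}$; everything else — the scaling bookkeeping and the selection of $x(t),y(t)$ — is routine.
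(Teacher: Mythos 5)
Your argument is correct and is essentially the paper's proof: rescale by $\rho_n=\|u(t_n)\|_{\dot H^{\sct}\text{-critical rate}}$ (the paper normalizes $\|v_n\|_{\dot H^s}$ to $\|g\|_{\dot H^s}$ rather than $1$, which is immaterial), use energy conservation to get the $L^{\alpha+2}$ lower bound, invoke the compactness lemma, and transfer the weak lower semicontinuity on balls back to the original scale via $a(t_n)/\rho_n\to\infty$. The only cosmetic deviations are your contradiction argument for upgrading ``along every sequence'' to $\liminf_{t\uparrow T}$ and your choice of $x(t)$ as an approximate maximizer, where the paper observes the supremum in $y$ is attained by continuity and decay at infinity; both are equally valid.
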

\begin{remark} \label{rem concentration NLFS}
	By the blowup rate given in Corollary $\ref{coro blowup rate}$ and the assumption $(\ref{bounded intro})$, we have 
	\[
	\|u(t)\|_{\dot{H}^s} > \frac{C}{(T-t)^{\frac{s-\sct}{2s}}},
	\]
	for $t\uparrow T$. Rewriting
	\begin{align*}
	\frac{1}{a(t) \|u(t)\|_{\dot{H}^s}^{\frac{1}{s-\sce}}} = \frac{\sqrt[2s]{T-t}}{a(t)} \frac{1}{\sqrt[2s]{T-t}\|u(t)\|_{\dot{H}^s}^{\frac{1}{s-\sce}}} &= \frac{\sqrt[2s]{T-t}}{a(t)} \left(\frac{1}{(T-t)^{\frac{s-\sct}{2s}} \|u(t)\|_{\dot{H}^s} } \right)^{\frac{1}{s-\sct}} \\
	&<C\frac{\sqrt[2s]{T-t}}{a(t)},
	\end{align*}
	we see that any function $a(t)>0$ satisfying $\frac{\sqrt[2s]{T-t}}{a(t)} \rightarrow 0$ as $t\uparrow T$ fulfills the conditions of Theorem $\ref{theorem concentration NLFS}$. 
\end{remark}
\noindent \textit{Proof of Theorem $\ref{theorem concentration NLFS}$.}
Let $(t_n)_{n\geq 1}$ be a sequence such that $t_n \uparrow T$ and $g\in \Gc$. Set
\[
\lambda_n := \left(\frac{\|g\|_{\dot{H}^s}}{\|u(t_n)\|_{\dot{H}^s}}\right)^{\frac{1}{s-\sct}}, \quad v_n(x):= \lambda_n^{\frac{2s}{\alpha}} u(t_n, \lambda_n x). 
\]
By the blowup alternative and the assumption $(\ref{bounded intro})$, we see that $\lambda_n \rightarrow 0$ as $n \rightarrow \infty$. Moreover, we have
\begin{align*}
\|v_n\|_{\dot{H}^{\sct}} = \|u(t_n)\|_{\dot{H}^{\sct}} <\infty,
\end{align*}
uniformly in $n$ and 
\[
\|v_n\|_{\dot{H}^s} = \lambda_n^{s-\sct}\|u(t_n)\|_{\dot{H}^s}= \|g\|_{\dot{H}^s},
\] 
and
\[
E(v_n) = \lambda_n^{2(s-\sct)} E(u(t_n)) = \lambda_n^{2(s-\sct)} E(u_0) \rightarrow 0, \quad \text{as } n \rightarrow \infty. 
\]
This implies in particular that
\begin{align*}
\|v_n\|^{\alpha+2}_{L^{\alpha+2}} \rightarrow \frac{\alpha+2}{2} \|g\|^2_{\dot{H}^s}, \quad \text{as } n \rightarrow \infty. 
\end{align*}
The sequence $(v_n)_{n\geq 1}$ satisfies the conditions of Theorem $\ref{theorem compactness lemma}$ with 
\[
m^{\alpha+2} = \frac{\alpha+2}{2} \|g\|^2_{\dot{H}^s}, \quad M^2 = \|g\|^2_{\dot{H}^s}. 
\]
Therefore, there exists a sequence $(x_n)_{n\geq 1}$ in $\R^d$ such that up to a subsequence,
\[
v_n(\cdot + x_n)  = \lambda_n^{\frac{2s}{\alpha}} u(t_n, \lambda_n \cdot + x_n) \rightharpoonup V \text{ weakly in } \dot{H}^{\sct} \cap \dot{H}^s,
\]
as $n \rightarrow \infty$ with $\|V\|_{\dot{H}^{\sct}} \geq  S_{\text{gs}}$. In particular, 
\[
(-\Delta)^{\frac{\sct}{2}} v(\cdot + x_n) = \lambda_n^{\frac{d}{2}} [(-\Delta)^{\frac{\sct}{2}} u](t_n, \lambda_n \cdot + x_n) \rightharpoonup (-\Delta)^{\frac{\sct}{2}} V \text{ weakly in } L^2. 
\]
This implies for every $R>0$,
\[
\liminf_{n\rightarrow \infty} \int_{|x|\leq R} \lambda_n^{d}| [(-\Delta)^{\frac{\sct}{2}} u](t_n, \lambda_n x + x_n)|^2 dx \geq \int_{|x|\leq R} |(-\Delta)^{\frac{\sct}{2}} V(x)|^2 dx,
\]
or 
\[
\liminf_{n\rightarrow \infty} \int_{|x-x_n|\leq R\lambda_n} | [(-\Delta)^{\frac{\sct}{2}} u](t_n, x)|^2 dx \geq \int_{|x|\leq R} |(-\Delta)^{\frac{\sct}{2}} V(x)|^2 dx.
\]
In view of the assumption $\frac{a(t_n)}{\lambda_n} \rightarrow \infty$ as $n\rightarrow \infty$, we get
\[
\liminf_{n\rightarrow \infty} \sup_{y\in \R^d} \int_{|x-y|\leq a(t_n)} |(-\Delta)^{\frac{\sct}{2}}u(t_n, x)|^2 dx \geq \int_{|x|\leq R} |(-\Delta)^{\frac{\sct}{2}}V(x)|^2 dx,
\]
for every $R>0$, which means that
\[
\liminf_{n\rightarrow \infty} \sup_{y \in \R^d} \int_{|x-y|\leq a(t_n)} |(-\Delta)^{\frac{\sct}{2}}u(t_n, x)|^2 dx \geq \int |(-\Delta)^{\frac{\sct}{2}}V(x)|^2 dx \geq  S_{\text{gs}}^2.
\]
Since the sequence $(t_n)_{n\geq 1}$ is arbitrary, we infer that
\[
\liminf_{t\uparrow T} \sup_{y\in \R^d} \int_{|x-y|\leq a(t)} |(-\Delta)^{\frac{\sct}{2}}u(t,x)|^2 dx \geq S_{\text{gs}}^2.
\]
But for every $t \in (0,T)$, the function $y\mapsto \int_{|x-y| \leq a(t)} |(-\Delta)^{\frac{\sct}{2}}u(t,x)|^2 dx$ is continuous and goes to zero at infinity. As a result, we get
\[
\sup_{y\in \R^d} \int_{|x-y|\leq a(t)} |(-\Delta)^{\frac{\sct}{2}}u(t,x)|^2 dx = \int_{|x-x(t)| \leq a(t)} |(-\Delta)^{\frac{\sct}{2}}u(t,x)|^2 dx,
\]
for some $x(t) \in \R^d$. This shows $(\ref{sobolev concentration})$. The proof for $(\ref{lebesgue concentration})$ is similar using Item 2 of Theorem $\ref{theorem compactness lemma}$. The proof is complete.
\defendproof
\section{Limiting profile with critical norms} \label{section limiting profile}
\setcounter{equation}{0}
Let us start with the following characterization of the ground state.
\begin{lemma} \label{lem characterization critical norm}
	Let $d\geq 1$, $0<s<1$ and $\frac{4s}{d}<\alpha<2^\star$. 
	\begin{itemize}
		\item If $u \in \dot{H}^{\sce} \cap \dot{H}^s$ is such that $\|u\|_{\dot{H}^{\sce}}=S_{\emph{gs}}$ and $E(u)=0$, then $u$ is of the form
		\[
		u(x) = e^{i\theta} \lambda^{\frac{2s}{\alpha}} g(\lambda x + x_0),
		\]
		for some $g\in \Gc$, $\theta \in \R, \lambda>0$ and $x_0 \in \R^d$. 
		\item If $u \in L^{\alce} \cap \dot{H}^s$ is such that $\|u\|_{L^{\alce}}=L_{\emph{gs}}$ and $E(u)=0$, then $u$ is of the form
		\[
		u(x) = e^{i\vartheta} \mu^{\frac{2s}{\alpha}} h(\mu x + y_0),
		\]
		for some $h\in \Hc$, $\vartheta \in \R, \mu>0$ and $y_0 \in \R^d$.
	\end{itemize}
\end{lemma}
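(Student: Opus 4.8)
The plan is to show that the two constraints force $u$ to be a maximizer of the Gagliardo--Nirenberg functional $H$ (resp.\ $K$), and then to read off the precise form of $u$ from the Euler--Lagrange equation of Lemma~\ref{lem maximizer NLFS} after a scaling normalization. Consider first Item~1. Since $\|u\|_{\dot{H}^{\sct}} = S_{\text{gs}} > 0$ (recall $S_{\text{gs}}>0$ by Proposition~\ref{prop variational structure sharp constants}), $u$ is not the zero element, hence $\|u\|_{\dot{H}^s}>0$ as well. From $E(u)=0$ we have $\|u\|_{\dot{H}^s}^2 = \frac{2}{\alpha+2}\|u\|_{L^{\alpha+2}}^{\alpha+2}$; combining this with the sharp inequality \eqref{sharp gagliardo-nirenberg inequality sobolev}, with $A_{\text{GN}}=\frac{\alpha+2}{2}S_{\text{gs}}^{-\alpha}$ and with $\|u\|_{\dot{H}^{\sct}}=S_{\text{gs}}$ gives
\[
\|u\|_{\dot{H}^s}^2 = \tfrac{2}{\alpha+2}\|u\|_{L^{\alpha+2}}^{\alpha+2} \le \tfrac{2}{\alpha+2}\,A_{\text{GN}}\,S_{\text{gs}}^{\alpha}\,\|u\|_{\dot{H}^s}^2 = \|u\|_{\dot{H}^s}^2,
\]
so equality holds throughout and $H(u)=A_{\text{GN}}$, i.e.\ $u$ is a maximizer of $H$.

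By Lemma~\ref{lem maximizer NLFS}, such a maximizer solves \eqref{maximizer equation 1}, and inserting $\|u\|_{\dot{H}^{\sct}}=S_{\text{gs}}$ together with $A_{\text{GN}}=\frac{\alpha+2}{2}S_{\text{gs}}^{-\alpha}$ reduces \eqref{maximizer equation 1} to
\[
(-\Delta)^s u + \beta\,(-\Delta)^{\sct}u - |u|^\alpha u = 0, \qquad \beta := \tfrac{\alpha}{2}\,S_{\text{gs}}^{-2}\|u\|_{\dot{H}^s}^2 > 0.
\]
I would then write $u(x)=a\,g(\lambda x)$ with $a\in\C^*$, $\lambda>0$ and choose the parameters so that the coefficients of $(-\Delta)^{\sct}g$ and of $|g|^\alpha g$ both become $1$; this forces $\lambda=\beta^{1/(2(s-\sct))}$ (well defined since $s>\sct$ throughout $\frac{4s}{d}<\alpha<2^\star$) and $|a|=\lambda^{2s/\alpha}$. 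Then $g:=a^{-1}u(\lambda^{-1}\cdot)$ solves the elliptic equation \eqref{elliptic equation sobolev}; since $H$ is invariant under $f\mapsto a\,f(\lambda\,\cdot)$ and $u$ maximizes $H$, so does $g$, hence $g\in\Gc$. Writing $a=e^{i\theta}\lambda^{2s/\alpha}$, this is precisely $u(x)=e^{i\theta}\lambda^{2s/\alpha}g(\lambda x)$, the claimed form with $x_0=0$ (any $x_0$ is admissible, $\Gc$ being translation invariant).

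Item~2 runs in parallel, now using \eqref{sharp gagliardo-nirenberg inequality lebesgue}, $B_{\text{GN}}=\frac{\alpha+2}{2}L_{\text{gs}}^{-\alpha}$ and equation \eqref{maximizer equation 2}: the constraints $E(u)=0$ and $\|u\|_{L^{\alct}}=L_{\text{gs}}$ saturate \eqref{sharp gagliardo-nirenberg inequality lebesgue}, so $u$ maximizes $K$ and solves, after simplification,
\[
(-\Delta)^s u + \gamma\,|u|^{\alct-2}u - |u|^\alpha u = 0, \qquad \gamma := \tfrac{\alpha}{2}\,L_{\text{gs}}^{-\alct}\|u\|_{\dot{H}^s}^2 > 0.
\]
The substitution $u(x)=b\,h(\mu x)$ normalizes the coefficients for $|b|=\gamma^{1/(\alpha+2-\alct)}$ (the exponent is positive since $2<\alct<\alpha+2$ on the admissible range) and $\mu=|b|^{\alpha/(2s)}$; then $h:=b^{-1}u(\mu^{-1}\cdot)$ solves \eqref{elliptic equation lebesgue}, invariance of $K$ under $f\mapsto b\,f(\mu\,\cdot)$ gives $h\in\Hc$, and $|b|=\mu^{2s/\alpha}$ yields $u(x)=e^{i\vartheta}\mu^{2s/\alpha}h(\mu x)$.

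The argument is essentially bookkeeping of scalings once the key observation is in place, namely that prescribing the critical norm together with $E(u)=0$ forces equality in the relevant sharp Gagliardo--Nirenberg inequality. The points requiring care, rather than any genuine obstacle, are: checking $\|u\|_{\dot{H}^s}>0$ so that $\beta$ (resp.\ $\gamma$) is strictly positive; verifying that $s-\sct$ and $\alpha+2-\alct$ stay positive on $\frac{4s}{d}<\alpha<2^\star$; and confirming that the unit-modulus phase in $a$ (resp.\ $b$) passes through the substitution, i.e.\ that the complex equations \eqref{maximizer equation 1}--\eqref{maximizer equation 2} are invariant under multiplication by a constant of modulus one.
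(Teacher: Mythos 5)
Your proof is correct and follows essentially the same route as the paper: the hypotheses $E(u)=0$ and $\|u\|_{\dot{H}^{\sct}}=S_{\text{gs}}$ (resp. $\|u\|_{L^{\alct}}=L_{\text{gs}}$) force $H(u)=A_{\text{GN}}$ (resp. $K(u)=B_{\text{GN}}$), so $u$ is a maximizer, and its form is then read off from the Euler--Lagrange equation after a scaling normalization. The only difference is that you carry out the rescaling step explicitly from Lemma \ref{lem maximizer NLFS}, whereas the paper invokes Proposition \ref{prop variational structure sharp constants}, whose proof contains the same computation.
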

\begin{proof}
	We only prove Item 1, Item 2 is treated similarly. Since $E(u)=0$, we have
	\[
	\|u\|^2_{\dot{H}^s} = \frac{2}{\alpha+2} \|u\|^{\alpha+2}_{L^{\alpha+2}}.
	\]
	Thus
	\[
	H(u) = \frac{\|u\|^{\alpha+2}_{L^{\alpha+2}}}{\|u\|^{\alpha}_{\dot{H}^{\sct}} \|u\|^2_{\dot{H}^s}} = \frac{\alpha+2}{2} \|u\|^{-\alpha}_{\dot{H}^{\sct}} = \frac{\alpha+2}{2} S_{\text{gs}}^{-\alpha} = A_{\text{GN}}.
	\]
	This shows that $u$ is the maximizer of $H$. Proposition $\ref{prop variational structure sharp constants}$ then implies that $u$ is of the form $u(x) = a g(\lambda x +x_0)$ for some $g\in \Gc$, $a \in \C^\star$, $\lambda>0$ and $x_0 \in \R^d$. Since $\|u\|_{\dot{H}^{\sct}} = S_{\text{gs}}=\|g\|_{\dot{H}^{\sct}}$, we have $|a|= \lambda^{\frac{2s}{\alpha}}$. The proof is complete.
\end{proof}
We are now able to show the limiting profile of blowup solutions with critical norms. 
\begin{theorem}[Limiting profile with critical norms] \label{theorem limiting profile critical norm}
	Let $d\geq 2$, $\frac{d}{2d-1} \leq s<1$ and $\frac{4s}{d} <\alpha<\frac{4s}{d-2s}$. Let $u_0 \in \dot{H}^{\sce} \cap \dot{H}^s$ be radial such that the corresponding solution $u$ to $(\ref{focusing intercritical NLFS})$ blows up at finite time $0<T<+\infty$. 
	\begin{itemize}
		\item Assume that 
		\begin{align}
		\sup_{t\in [0,T)} \|u(t)\|_{\dot{H}^{\sce}} = S_{\emph{gs}}. \label{assumption critical sobolev norm}
		\end{align}
		Then there exist $g \in \Gc$, $\theta(t)\in \R$, $\lambda(t)>0$ and $x(t) \in \R^d$ such that 
		\[
		e^{i\theta(t)}  \lambda^{\frac{2s}{\alpha}}(t) u(t, \lambda(t) \cdot + x(t)) \rightarrow g \text{ strongly in } \dot{H}^{\sce} \cap \dot{H}^s \text{ as } t \uparrow T.
		\]
		\item Assume that 
		\begin{align}
		\sup_{t\in [0,T)} \|u(t)\|_{\dot{H}^{\sce}} < \infty, \quad \sup_{t\in [0,T)} \|u(t)\|_{L^{\alce}} = L_{\emph{gs}}. \label{assumption critical lebesgue norm}
		\end{align}
		Then there exist $h\in \Hc$, $\vartheta(t)\in \R$, $\mu(t)>0$ and $y(t) \in \R^d$ such that 
		\[
		e^{i\vartheta(t)}  \mu^{\frac{2s}{\alpha}}(t) u(t, \mu(t) \cdot + y(t)) \rightarrow h \text{ strongly in } L^{\alce} \cap \dot{H}^s \text{ as } t \uparrow T.
		\]
	\end{itemize}
\end{theorem}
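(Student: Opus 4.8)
The plan is to run the standard concentration--compactness scheme, reducing the theorem to the compactness lemma (Theorem~\ref{theorem compactness lemma}), the sharp Gagliardo--Nirenberg inequalities $(\ref{sharp gagliardo-nirenberg inequality sobolev})$--$(\ref{sharp gagliardo-nirenberg inequality lebesgue})$, and the rigidity statement of Lemma~\ref{lem characterization critical norm}. I carry out Item~1 in detail; Item~2 is obtained verbatim after replacing $\dot{H}^{\sct}$ by $L^{\alct}$, $\Gc$ by $\Hc$, Theorem~\ref{theorem compactness lemma}(1) by Theorem~\ref{theorem compactness lemma}(2), $(\ref{sharp gagliardo-nirenberg inequality sobolev})$ by $(\ref{sharp gagliardo-nirenberg inequality lebesgue})$, and Lemma~\ref{lem characterization critical norm}(1) by Lemma~\ref{lem characterization critical norm}(2), with the uniform convexity of $L^{\alct}$ playing the role of the Hilbert structure of $\dot{H}^{\sct}$. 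The radiality of $u_0$ enters only through the local theory and the blowup alternative of Proposition~\ref{prop LWP H dot s}.

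First I would fix a reference $g\in\Gc$ and, along an arbitrary sequence $t_n\uparrow T$, introduce the rescalings $\lambda_n:=\big(\|g\|_{\dot{H}^s}/\|u(t_n)\|_{\dot{H}^s}\big)^{1/(s-\sct)}$ and $v_n(x):=\lambda_n^{2s/\alpha}u(t_n,\lambda_n x)$. Since $\sup_{t\in[0,T)}\|u(t)\|_{\dot{H}^{\sct}}=S_{\text{gs}}<\infty$, the blowup alternative in Proposition~\ref{prop LWP H dot s} forces $\|u(t_n)\|_{\dot{H}^s}\to\infty$, hence $\lambda_n\to0$. The scaling relations then give $\|v_n\|_{\dot{H}^{\sct}}=\|u(t_n)\|_{\dot{H}^{\sct}}\le S_{\text{gs}}$, $\|v_n\|_{\dot{H}^s}=\|g\|_{\dot{H}^s}$, and $E(v_n)=\lambda_n^{2(s-\sct)}E(u_0)\to0$, the latter two yielding $\|v_n\|_{L^{\alpha+2}}^{\alpha+2}\to\frac{\alpha+2}{2}\|g\|_{\dot{H}^s}^2$. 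Thus $(v_n)_{n\ge1}$ meets the hypotheses of Theorem~\ref{theorem compactness lemma} with $M^2=\|g\|_{\dot{H}^s}^2$ and $m^{\alpha+2}=\frac{\alpha+2}{2}\|g\|_{\dot{H}^s}^2$, producing $(x_n)_{n\ge1}$ with $v_n(\cdot+x_n)\rightharpoonup V$ weakly in $\dot{H}^{\sct}\cap\dot{H}^s$ and $\|V\|_{\dot{H}^{\sct}}^\alpha\ge S_{\text{gs}}^\alpha$.

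The crux is to upgrade this weak convergence to a strong one and to identify $V$. Weak lower semicontinuity together with $(\ref{assumption critical sobolev norm})$ gives $\|V\|_{\dot{H}^{\sct}}\le\liminf_n\|u(t_n)\|_{\dot{H}^{\sct}}\le S_{\text{gs}}$, hence $\|V\|_{\dot{H}^{\sct}}=S_{\text{gs}}$, and since $\dot{H}^{\sct}$ is a Hilbert space, $v_n(\cdot+x_n)\to V$ strongly in $\dot{H}^{\sct}$. Applying $(\ref{sharp gagliardo-nirenberg inequality sobolev})$ to $v_n(\cdot+x_n)-V$ and using the boundedness of $(v_n(\cdot+x_n))$ in $\dot{H}^s$ then forces $v_n(\cdot+x_n)\to V$ strongly in $L^{\alpha+2}$, so $\|V\|_{L^{\alpha+2}}^{\alpha+2}=\frac{\alpha+2}{2}\|g\|_{\dot{H}^s}^2$. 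Combining this identity with $\|V\|_{\dot{H}^s}\le\|g\|_{\dot{H}^s}$ (weak lower semicontinuity) and the sharp bound $\|V\|_{L^{\alpha+2}}^{\alpha+2}\le A_{\text{GN}}\|V\|_{\dot{H}^{\sct}}^\alpha\|V\|_{\dot{H}^s}^2=\frac{\alpha+2}{2}\|V\|_{\dot{H}^s}^2$ forces $\|V\|_{\dot{H}^s}=\|g\|_{\dot{H}^s}$, equality in $(\ref{sharp gagliardo-nirenberg inequality sobolev})$, and $E(V)=0$. Hence $V$ is a maximizer of $H$ with $\|V\|_{\dot{H}^{\sct}}=S_{\text{gs}}$ and $E(V)=0$, so Lemma~\ref{lem characterization critical norm} applies; together with the Pohozaev identities $(\ref{pohozaev identities})$, which pin every ground state to have $\dot{H}^s$-norm $\sqrt{2/\alpha}\,S_{\text{gs}}$ and thereby force the dilation parameter to equal $1$, this gives $V(x)=e^{i\theta_0}g_0(x-x_0)$ for some $g_0\in\Gc$, $\theta_0\in\R$, $x_0\in\R^d$. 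Finally, from $\|v_n(\cdot+x_n)\|_{\dot{H}^s}=\|g\|_{\dot{H}^s}=\|V\|_{\dot{H}^s}$ and weak convergence in the Hilbert space $\dot{H}^s$, the convergence is strong there too, so $v_n(\cdot+x_n)\to V$ strongly in $\dot{H}^{\sct}\cap\dot{H}^s$.

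It then remains to promote this subsequential statement to a genuine limit as $t\uparrow T$, which is done by the usual contradiction argument: if the conclusion failed, there would be $\varepsilon_0>0$ and $t_n\uparrow T$ so that the rescaled solution $\lambda(t_n)^{2s/\alpha}u(t_n,\lambda(t_n)\,\cdot\,)$, with $\lambda(t)=\big(\|g\|_{\dot{H}^s}/\|u(t)\|_{\dot{H}^s}\big)^{1/(s-\sct)}$, stays at distance $\ge\varepsilon_0$ in $\dot{H}^{\sct}\cap\dot{H}^s$ from the orbit of $\Gc$ under phase and translation, contradicting the compactness just established along a subsequence of $(t_n)$; this selects a ground state $g$ and parameters $\theta(t),\lambda(t),x(t)$ realizing the convergence. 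The main obstacle is precisely the no-loss-of-mass step above — excluding any leakage of $\dot{H}^{\sct}$-, $\dot{H}^s$- or $L^{\alpha+2}$-norm in the weak limit — which is exactly where the vanishing of the rescaled energy, the sharpness of the Gagliardo--Nirenberg constant, and the rigidity of Lemma~\ref{lem characterization critical norm} must be used in tandem; since mass conservation is unavailable, it is essential that the critical-norm hypothesis $(\ref{assumption critical sobolev norm})$ (resp.\ $(\ref{assumption critical lebesgue norm})$) supplies the otherwise missing uniform bound.
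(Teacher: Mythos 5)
Your proposal is correct and follows essentially the same route as the paper: rescale along an arbitrary sequence $t_n\uparrow T$, apply Theorem \ref{theorem compactness lemma}, use weak lower semicontinuity together with the critical-norm hypothesis and the sharp Gagliardo--Nirenberg inequality to upgrade weak convergence to convergence of all three norms (hence strong convergence in the Hilbert, resp.\ uniformly convex, setting), and identify the limit via Lemma \ref{lem characterization critical norm}. Your two additions --- pinning the dilation parameter to $1$ via the Pohozaev identities instead of absorbing it into a redefinition of $\lambda_n, x_n$, and the explicit contradiction argument passing from subsequential convergence to convergence as $t\uparrow T$ --- are harmless refinements of the paper's argument, not a different approach.
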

\begin{proof}
	We only prove the first item, the second one is treated similarly. We will show that for any $(t_n)_{n\geq 1}$ satisfying $t_n \uparrow T$, there exist a subsequence still denoted by $(t_n)_{n\geq 1}$, $g\in \Gc$, sequences of $\theta_n \in \R, \lambda_n>0$ and $x_n \in \R^d$ such that
	\begin{align}
	e^{i\theta_n} \lambda^{\frac{2s}{\alpha}}_n u(t_n, \lambda_n \cdot + x_n) \rightarrow g \text{ strongly in } \dot{H}^{\sct} \cap \dot{H}^s \text{ as } n \rightarrow \infty. \label{limiting profile critical norm proof}
	\end{align}
	Let $(t_n)_{n\geq 1}$ be a sequence such that $t_n \uparrow T$. Set
	\[
	\lambda_n := \left(\frac{\|Q\|_{\dot{H}^s}}{\|u(t_n)\|_{\dot{H}^s}}\right)^{\frac{1}{s-\sct}}, \quad v_n(x):= \lambda_n^{\frac{2s}{\alpha}} u(t_n, \lambda_n x),
	\]
	where $Q$ is as in Proposition $\ref{prop variational structure sharp constants}$. By the blowup alternative and $(\ref{assumption critical sobolev norm})$, we see that $\lambda_n \rightarrow 0$ as $n \rightarrow \infty$. Moreover, we have
	\begin{align}
	\|v_n\|_{\dot{H}^{\sct}} = \|u(t_n)\|_{\dot{H}^{\sct}} \leq S_{\text{gs}}=\|Q\|_{\dot{H}^{\sct}},  \label{property v_n}
	\end{align}
	and
	\begin{align}
	\|v_n\|_{\dot{H}^s} = \lambda_n^{s-\sct} \|u(t_n)\|_{\dot{H}^s} = \| Q\|_{\dot{H}^s}, \label{property v_n 1}
	\end{align}
	and
	\[
	E(v_n) = \lambda_n^{2(s-\sct)} E(u(t_n)) = \lambda_n^{2(s-\sct)} E(u_0) \rightarrow 0, \quad \text{as } n \rightarrow \infty. 
	\]
	This yields in particular that
	\begin{align}
	\|v_n\|^{\alpha+2}_{L^{\alpha+2}} \rightarrow \frac{\alpha+2}{2} \|Q\|^2_{\dot{H}^s}, \quad \text{as } n \rightarrow \infty. \label{convergence v_n}
	\end{align}
	The sequence $(v_n)_{n\geq 1}$ satisfies the conditions of Theorem $\ref{theorem compactness lemma}$ with 
	\[
	m^{\alpha+2} = \frac{\alpha+2}{2} \|Q\|^2_{\dot{H}^s}, \quad M^2 = \|Q\|^2_{\dot{H}^s}. 
	\]
	Therefore, there exists a sequence $(x_n)_{n\geq 1}$ in $\R^d$ such that up to a subsequence,
	\[
	v_n(\cdot + x_n)  = \lambda_n^{\frac{2s}{\alpha}} u(t_n, \lambda_n \cdot + x_n) \rightharpoonup V \text{ weakly in } \dot{H}^{\sct} \cap \dot{H}^s,
	\]
	as $n \rightarrow \infty$ with $\|V\|_{\dot{H}^{\sct}} \geq S_{\text{gs}}$. 
	Since $v_n(\cdot +x_n) \rightharpoonup V$ weakly in $\dot{H}^{\sct}\cap \dot{H}^s$ as $n\rightarrow \infty$, the semi-continuity of weak convergence and $(\ref{property v_n})$ imply
	\[
	\|V\|_{\dot{H}^{\sct}} \leq \liminf_{n\rightarrow \infty} \|v_n\|_{\dot{H}^{\sct}} \leq S_{\text{gs}}. 
	\]
	This together with the fact $\|V\|_{\dot{H}^{\sct}} \geq S_{\text{gs}}$ show that
	\begin{align}
	\|V\|_{\dot{H}^{\sct}} = S_{\text{gs}} = \lim_{n\rightarrow \infty} \|v_n\|_{\dot{H}^{\sct}}. \label{H dot gamma norm v_n}
	\end{align}
	Therefore,
	\[
	v_n(\cdot +x_n) \rightarrow V \text{ strongly in } \dot{H}^{\sct} \text{ as } n\rightarrow \infty. 
	\]
	On the other hand, the Gagliardo-Nirenberg inequality $(\ref{sharp gagliardo-nirenberg inequality sobolev})$ shows that $v_n(\cdot +x_n) \rightarrow V$ strongly in $L^{\alpha+2}$ as $n \rightarrow \infty$. Indeed, by $(\ref{property v_n 1})$,
	\begin{align*}
	\|v_n(\cdot +x_n) -V\|^{\alpha+2}_{L^{\alpha+2}} &\lesssim \|v_n(\cdot + x_n) - V\|^{\alpha}_{\dot{H}^{\sct}} \|v_n(\cdot + x_n) - V\|^2_{\dot{H}^s} \\
	&\lesssim (\|Q\|_{\dot{H}^s} + \|V\|_{\dot{H}^s})^2 \|v_n(\cdot +x_n) -V\|^{\alpha}_{\dot{H}^{\sct}} \rightarrow 0,
	\end{align*}
	as $n\rightarrow \infty$. Moreover, using $(\ref{convergence v_n})$ and $(\ref{H dot gamma norm v_n})$, the sharp Gagliardo-Nirenberg inequality $(\ref{sharp gagliardo-nirenberg inequality sobolev})$ yields
	\begin{align*}
	\|Q\|^2_{\dot{H}^s} = \frac{2}{\alpha+2} \lim_{n\rightarrow \infty} \|v_n\|^{\alpha+2}_{L^{\alpha+2}} = \frac{2}{\alpha+2} \|V\|^{\alpha+2}_{L^{\alpha+2}} \leq \Big(\frac{\|V\|_{\dot{H}^{\sct}}}{S_{\text{gs}}} \Big)^{\alpha} \|V\|^2_{\dot{H}^s} = \|V\|^2_{\dot{H}^s},
	\end{align*}
	or $\|Q\|_{\dot{H}^s} \leq \|V\|_{\dot{H}^s}$. By the semi-continuity of weak convergence and $(\ref{property v_n 1})$, 
	\[
	\|V\|_{\dot{H}^s} \leq \liminf_{n\rightarrow \infty} \|v_n\|_{\dot{H}^s} = \|Q\|_{\dot{H}^s}. 
	\]
	Therefore,
	\begin{align}
	\|V\|_{\dot{H}^s} = \|Q\|_{\dot{H}^s} = \lim_{n\rightarrow \infty} \|v_n\|_{\dot{H}^s}.\label{H dot 1 norm v_n}
	\end{align}
	Combining $(\ref{H dot gamma norm v_n}), (\ref{H dot 1 norm v_n})$ and using the fact $v_n(\cdot + x_n) \rightharpoonup V$ weakly in $\dot{H}^{\sct} \cap \dot{H}^s$, we conclude that 
	\[
	v_n(\cdot + x_n) \rightarrow V \text{ strongly in } \dot{H}^{\sct} \cap \dot{H}^s \text{ as } n\rightarrow \infty.
	\]
	In particular, we have $E(V) = \lim_{n\rightarrow \infty} E(v_n) =0$. This shows that there exists $V \in \dot{H}^{\sct} \cap \dot{H}^s$ such that
	\[
	\|V\|_{\dot{H}^{\sct}} = S_{\text{gs}}, \quad E(V) =0.
	\]
	By Lemma $\ref{lem characterization critical norm}$, there exists $g\in \Gc$ such that $V(x) = e^{i\theta} \lambda^{\frac{2s}{\alpha}} g(\lambda x +x_0)$ for some $\theta \in \R, \lambda>0$ and $x_0 \in \R^d$. Thus
	\[
	v_n(\cdot + x_n) = \lambda_n^{\frac{2s}{\alpha}} u(t_n, \lambda_n \cdot + x_n) \rightarrow V = e^{i\theta} \lambda^{\frac{2s}{\alpha}} g(\lambda \cdot +x_0) \text{ strongly in } \dot{H}^{\sct} \cap \dot{H}^s \text{ as } n \rightarrow \infty.
	\]
	Redefining variables as
	\[
	\tilde{\lambda}_n:= \lambda_n \lambda^{-1}, \quad \tilde{x}_n:= \lambda_n \lambda^{-1} x_0 +x_n,
	\]
	we get
	\[
	e^{-i\theta} \tilde{\lambda}^{\frac{2s}{\alpha}}_n u(t_n, \tilde{\lambda}_n \cdot + \tilde{x}_n) \rightarrow g \text{ strongly in } \dot{H}^{\sct} \cap \dot{H}^s \text{ as } n\rightarrow \infty.
	\]
	This proves $(\ref{limiting profile critical norm proof})$ and the proof is complete.
\end{proof}

\section*{Acknowledgments}
The author would like to express his deep thanks to his wife - Uyen Cong for her encouragement and support. He would like to thank his supervisor Prof. Jean-Marc Bouclet for the kind guidance and constant encouragement. He also would like to thank the reviewer for his/her helpful comments and suggestions.


\begin{thebibliography}{5}
	
	\bibitem{BerghLofstom} {J. Bergh and J. L\"ofst\"om}, {\it Interpolation spaces}, Springer, New York, 1976.
	
	\bibitem{BoulengerHimmelsbachLenzmann} {T. Boulenger, D. Himmelsbach and E. Lenzmann}, {\it Blowup for fractional NLS}, J. Funct. Anal. 271 (2016), 2569--2603.
	
	\bibitem{ChoLee} {Y. Cho and S. Lee}, {\it Strichartz estimates in spherical coordinates}, Indiana Univ. Math. J. 62 (2013), No. 3, 991--1020.
	
	\bibitem{ChoOzawa}{Y. Cho and T. Ozawa} {\it Sobolev inequalities with symmetry}, Commun. Contemp. Math. 11 (2009), No. 3, 355--365.
	
	\bibitem{ChoOzawaXia} {Y. Cho, T. Ozawa and S. Xia}, {\it Remarks on some dispersive estimates}, Commun. Pure Appl. Anal. 10 (2011), No. 4, 1121--1128.
	
	\bibitem{ChoHajaiejHwangOzawa} {Y. Cho, H. Hajaiej, G. Hwang and T. Ozawa}, {\it On the Cauchy problem of fractional Schr\"odigner equation with Hartree type nonlinearity}, Funkcial. Ekvac. 56 (2013), No. 2, 193--224.
	
	\bibitem{ChoHwangKwonLee} {Y. Cho, G. Hwang, S. Kwon and S. Lee}, {\it Profile decompositions and blow-up phenomena of mass critical fractional Schr\"odinger equations}, Nonlinear Anal. 86 (2013), 12--29.
	
	\bibitem{ChoHwangOzawa} {Y. Cho, G. Hwang and T. Ozawa}, {\it On the focusing energy-critical fractional nonlinear Schr\"odinger equations}, Adv. Differential Equations 23 (2018), No. 3-4, 161--192.
	
	\bibitem{ChristWeinstein} {M. Christ and I. Weinstein}, {\it Dispersion of small amplitude solutions of the generalized Korteweg-de Vries equation}, J. Funct. Anal. 100 (1991), No. 1, 87--109.
	
	\bibitem{Dinh-fract} {V. D. Dinh}, {\it Well-posedness of nonlinear fractional Schr\"odinger and wave equations in Sobolev spaces}, \url{arXiv:1609.06181}, 2016.
	
	\bibitem{Dinh-masscritical} {V. D. Dinh}, {\it On blowup solutions to the focusing mass-critical nonlinear fractional Schr\"odinger equation}, Commun. Pure Appl. Anal, to appear, 2018.
	
	\bibitem{Dinh-blowupfourth} {V. D. Dinh}, {\it On blowup solutions to the focusing intercritical nonlinear fourth-order Schr\"odinger equation}, \url{arXiv:1801.08866}, 2018.
	
	\bibitem{Feng} {B. Feng}, {\it On the blow-up solutions for the fractional nonlinear Schr\"odinger equation with combined power-type nonlinearities}, Commun. Pure Appl. Anal. 17 (2018), No. 5, 1785--1504.
	
	\bibitem{FrankLenzmann} {R. L. Frank and E. Lenzmann}, {\it Uniqueness of nonlinear gound states for fractional Laplacians in $\R$}, Acta Math. 210 (2013), No. 2, 261--318.
	
	\bibitem{FrankLenzmannSilvestre} {R. L. Frank, E. Lenzmann and L. Silvestre}, {\it Uniqueness of radial solutions for the fractional Laplacian}, Comm. Pure Appl. Math. 69 (2016), 1671--1725.
	
	\bibitem{GinibreVelo} {J. Ginibre and G. Velo}, {\it The global Cauchy problem for the nonlinear Klein-Gordon equation}, Math. Z. 189 (1985), 487--505.
	
	\bibitem{GuoWang} {Z. Guo and Y. Wang}, {\it Improved Strichartz estimates for a class of dispersive equations in the radial case and their applications to nonlinear Schr\"odinger and wave equations}, J. Anal. Math. 124 (2014), No. 1, 1--38. 
	
	\bibitem{Guoblowup} {Q. Guo}, {\it A note on concentration for blowup solutions to supercritical Schr\"odinger equations}, Proc. Amer. Math. Soc. 141 (2013), No. 12, 4215--4227.
	
	\bibitem{HongSire} {Y. Hong and Y. Sire}, {\it On fractional Schr\"odinger equations in Sobolev spaces}, Commun. Pure Appl. Anal. 14 (2015), 2265--2282.
	
	\bibitem{HmidiKeraani} {T. Hmidi and S. Keraani}, {\it Blowup theory for the critical nonlinear Schr\"odinger equation revisited}, Int. Math. Res. Not. 46 (2005), 2815--2828.
	
	\bibitem{IonescuPusateri}{A. D. Ionescu and F. Pusateri}, {\it Nonlinear fractional Schr\"odinger equations in one dimension}, J. Funct. Anal. 266 (2014), 139--176.
	
	\bibitem{Ke} {Y. Ke}, {\it Remark on the Strichartz estimates in the radial case}, J. Math. Anal. Appl. 387 (2012), 857--861.
	
	\bibitem{KleinSparberMarkowich} {C. Klein, C. Sparber and P. Markowich}, {\it Numerical study of fractional nonlinear Schr\"odinger equations}, Proc. R. Soc. Lond. Ser. A 470 (2014), 20140364. 
	
	\bibitem{Laskin} {N. Laskin}, {\it Fractional Schr\"odinger equation}, Phys. Rev. E 66 (2002), No. 5: 056108.
	
	\bibitem{MerleRaphael} {F. Merle and P. Raphael}, {\it Blow up of critical norm for some radial $L^2$ super critical nonlinear Schr\"odinger equations}, Amer. J. Math. 130 (2008), No. 4, 945--978.
	
	\bibitem{Nguyen} {Q. H. Nguyen}, {\it Sharp Strichartz estimates for water waves systems}, Trans. Amer. Math. Soc, to appear, 2018. 
	
	\bibitem{PengShi} {C. Peng, Q. Shi}, {\it Stability of standing waves for the fractional nonlinear Schr\"odinger equation}, J. Math. Phys. 59 (2018), 011508.
	
	\bibitem{Triebel} {H. Triebel}, {\it Theory of function spaces}, Basel Birkh\"auser, 1983.
	
	\bibitem{Zhu} {S. Zhu}, {\it On the blow-up solutions for the nonlinear fractional Schr\"odinger equation}, J. Differential Equations 261 (2016), No. 2, 1506--1531.
	
\end{thebibliography}
\end{document}